\documentclass[11pt,namelimits,sumlimits,a4paper]{amsart}
\usepackage{cite}
\usepackage{comment}

\usepackage{amssymb,amsmath}
\usepackage[mathscr]{eucal}
\usepackage{slashed}

\usepackage{graphicx}
\usepackage{amsmath,amsthm}
\usepackage{graphics}
\usepackage{color}
\usepackage{epsfig}
\usepackage{amssymb,amsmath}
\usepackage[mathscr]{eucal}
\usepackage[latin1]{inputenc}
\usepackage[T1]{fontenc}
\usepackage[UKenglish]{babel}
\usepackage{amsfonts}
\usepackage{fancyhdr}
\usepackage{graphicx}
\usepackage{amsmath}
\usepackage{amsthm}
\usepackage{amsmath,amscd}
\usepackage{latexsym}
\usepackage{cite}
\usepackage{amssymb,amsmath}

\usepackage{comment}
\usepackage[mathscr]{eucal}
\usepackage{slashed}
\usepackage{times,psfrag}
\usepackage{mathtools}
\usepackage{hyperref}
\usepackage{multirow}
\usepackage{longtable}
\usepackage{bm}
\usepackage[all]{xypic}
\usepackage{fancyhdr}
\usepackage{float}

\usepackage{esint}

\usepackage[]{graphicx}
\usepackage{graphics}
\usepackage{color}
\usepackage{epsfig}
\usepackage{layout}
\usepackage{enumitem}

\usepackage{lmodern}
\usepackage[rgb]{xcolor}
\usepackage[draft,author={Lorenzo Foscolo}]{pdfcomment}

\numberwithin{equation}{section}
\newtheorem{theorem}[equation]{Theorem}
\newtheorem{lemma}[equation]{Lemma}
\newtheorem{prop}[equation]{Proposition}
\newtheorem{corollary}[equation]{Corollary}

\theoremstyle{definition}
\newtheorem{definition}[equation]{Definition}

\theoremstyle{remark}
\newtheorem{remark}[equation]{Remark}
\newtheorem*{remark*}{Remark}

\newcounter{mtheorem}

\newtheoremstyle{mystyle}%                % Name
  {}%                                     % Space above
  {}%                                     % Space below
  {\itshape}%                             % Body font
  {}%                                     % Indent amount
  {\bfseries}%                            % Theorem head font
  {.}%                                    % Punctuation after theorem head
  { }%                                    % Space after theorem head, ' ', or \newline
  {}%                                     % Theorem head spec (can be left empty, meaning `normal')

\theoremstyle{mystyle}
\newtheorem{mtheorem}[mtheorem]{Theorem}

\newcommand{\abs}[1]{\lvert#1\rvert}

\newcommand{\ie}{\emph{i.e.} }

\newcommand{\cf}{\emph{cf.} }

\newcommand{\beq}{\begin{equation}}
\newcommand{\eeq}{\end{equation}}
\newcommand{\bea}{\begin{eqnarray}}
\newcommand{\eea}{\end{eqnarray}}

\newcommand{\C}{\mathbb{C}}

\newcommand{\R}{\mathbb{R}}

\newcommand{\Z}{\mathbb{Z}}
\newcommand{\N}{\mathbb{N}}

\newcommand{\PP}{\mathbb{P}}
\newcommand{\Sph}{\mathbb{S}}
\newcommand{\ra}{\rightarrow}

\newcommand{\vol}{\operatorname{Vol}}

\newcommand{\Real}{\operatorname{Re}}
\newcommand{\Imag}{\operatorname{Im}}

\newcommand{\tu}[1]{\textup{#1}}

\newcommand{\unitary}[1]{\textup{U$(#1)$}}

\newcommand{\sunitary}[1]{\textup{SU$(#1)$}}

\newcommand{{\isomgtc}}{\ensuremath{\sunitary{2}^3 \rtimes S_3}}

\newcommand{\Hess}{\operatorname{Hess}}
\newcommand{\hol}{\operatorname{hol}}
\newcommand{\Hol}{\operatorname{Hol}}
\newcommand{\Lam}{\Lambda}
\DeclareMathOperator{\tr}{tr}

\def\co{\colon\thinspace}

\begin{document}

\title{Cohomogeneity one solutions of the IIB system}

\author[L. Foscolo]{Lorenzo Foscolo}
\address{Sapienza Universit\`a di Roma, Piazzale A. Moro 5, 00185 Roma, Italia}
\email{lorenzo.foscolo@uniroma1.it}

\author[M. Garcia-Fernandez]{Mario Garcia-Fernandez}
\address{Instituto de Ciencias Matem\'aticas (CSIC-UAM-UC3M-UCM)\\ Nicol\'as Cabrera 13--15, Cantoblanco\\ 28049 Madrid, Spain}
\email{mario.garcia@icmat.es}

\thanks{LF was partially supported by the Ministero dell'Universit\`a e della Ricerca of Italy -- Avviso FIS2 grant FIS-2023-0395. MGF was partially supported by the Spanish Ministry of Science and Innovation, through the `Severo Ochoa Programme for Centres of Excellence in R\&D' (CEX2023-001347-S), and under grants PID2022-141387NB-C22 and PID2025-174260NB-C21, and by the European Union's Horizon 2020 research and innovation programme under the Marie Sk\l odowska-Curie grant agreement No 101273232 SURF FLOW}

\maketitle

\begin{abstract}
The IIB system is a system of PDEs for an $\sunitary{3}$--structure and a positive function on a 6-manifold. Its solutions describe conformally balanced pluriclosed Hemitian metrics on complex 3-folds with holomorphically trivial canonical line bundle. In particular, solutions of the IIB system are steady solitons for the generalized Ricci-flow and Bismut--Hermitian--Einstein metrics.

In this paper we study cohomogeneity one solutions of the IIB system. We construct 1-parameter families (up to scaling symmetries) of complete non-compact non-K\"ahler solutions on the smoothing of the conifold with controlled geometry at infinity. The generic member of the family is asymptotically conical with tangent cone at infinity the Calabi--Yau cone metric on the conifold. As a limit of the family we recover an explicit solutions known in the physics literature as the Chamseddine--Volkov/Maldacena--Nu\~nez solution, which has an exotic asymptotic geometry. We show that there are no complete cohomogeneity one solutions on crepant resolutions of the conifold (or its quotient), but we also establish the existence of an analogous 1-parameter family of forward complete solutions defined on exterior domains of the conifold with prescribed incomplete behaviour along the interior boundary and similar asymptotic behaviour at infinity.

As a byproduct of these existence results, we produce infinitely many complete non-compact non-K\"ahler Bismut--Hermitian--Einstein metrics in complex dimension 3 with full holonomy of the Bismut connection, in contrast to the holonomy reduction forced upon compact examples. We also find infinitely many such complete non-compact full-holonomy non-K\"ahler examples (with at least two ends) in complex dimension 2 by revisiting a construction due to Callan--Harvey--Strominger in the physics literature.
\end{abstract}

\section{Introduction}

Motivated by the moduli space for K3 surfaces, M. Reid proposed that there may exist an irreducible moduli space of algebraic Calabi--Yau manifolds, with varying topology \cite{Reid87}. The basic underlying principle of \emph{Reid's fantasy} was earlier described by Clemens and Friedman, who showed that one can construct new, possibly non-K\"ahler complex manifolds with trivial canonical bundle, by contracting a collection of disjoint $(-1,-1)$ curves and then smoothing the resulting ordinary double point singularities. The key open question is whether all complex threefolds with trivial canonical bundle can be connected by a sequence of such \emph{conifold transitions}. A natural approach to this problem is via metric degenerations, using Yau's solution of the Calabi-Conjecture \cite{Yau76} and, in the realm of K\"ahler geometry, this has been implemented to some extent in \cite{RZ11,Song15,Tosatti09}. However, conifold transitions (and flops) may often end up in a non-K\"ahler complex manifold, where the powerful techniques of K\"ahler geometry do not apply.

In a series of papers, Fu, Li, and Yau proposed to implement this metric approach to Reid's fantasy using natural equations in string theory \cite{LiYau05,FLY12}, today known as the Hull--Strominger system \cite{Hull86,Strominger86}. The relevance of these equations is that the metric is hermitian, but no longer K\"ahler, and its torsion $H= - d^c\omega$ is coupled, via the \emph{Bianchi identity}, to an exact four-form source produced by an instanton, with an intensity which is proportional to the \emph{slope parameter} in string theory. Partial results in this direction have been obtained in \cite{Chuan12,CPY24,FPS26}, though the main question about solvability of the Bianchi identity through conifold transitions remains open.

In this setup, a natural, simpler, question is to study the limit in which the \emph{slope parameter} of the Hull--Strominger system tends to zero, under conifold transitions. Formally, the corresponding equations are given by
\begin{equation}\label{eq:IIBintro}
d(e^{-2\phi}\Omega)=0, \qquad d(e^{-2\phi}\omega^2)=0, \qquad dd^c\omega= 0,
\end{equation}
where $(\omega,\Omega)$ is an $\tu{SU}(3)$--structure on a six-dimensional smooth manifold $M$, and $\phi$ is a smooth function called the \emph{dilaton}. The corresponding system of PDEs is known as the \emph{IIB system}, first introduced by Gra\~na--Minasian--Petrini--Tomasiello in \cite{GMPT05} and later studied by Tseng--Yau in \cite{TY14}. In this limit, the equations make contact with the equations of motion of Type II string theory (see Proposition \ref{prop:soliton}), relating to the seminal studies of conifold transitions in the string theory literature \cite{ABM97,GMS95} (see \cite{Vafa} for the local case with fluxes, and the relation to mirror symmetry). Even though compact solutions of the system \ref{eq:IIBintro} are necessarily K\"ahler, non-compact non-K\"ahler solutions may exist, possibly arising as bubbles in the metric degeneration process. Further mathematical motivation for the IIB system is given by generalized geometry, where solutions can be interpreted as steady solitons for the generalized Ricci flow \cite{GFSBook}. Notice that, in particular, any such solution is \emph{Bismut--Hermitian--Einstein} (BHE) in the sense of \cite{GFJS}.

Motivated by this picture, in the present paper we study non-compact solutions of the IIB system on the local models for a conifold transition, given by the
smoothing of the conifold and its two (isomorphic) small resolutions. We will refer to these cases as the \emph{deformed} and \emph{resolved} conifold respectively. Remarkably, these non-compact complex manifolds admit a cohomogeneity
one action of $G=\sunitary{2}^2$, \ie the group $G$ acts on the manifold $M$ in question preserving its complex structure (and a holomorphic volume form) with generic orbits of codimension 1 (the \emph{principal orbits}). We shall adopt here this symmetric ansatz and look for $\sunitary{2}^2$--invariant solutions of the IIB system. The large symmetry group reduces \eqref{eq:IIBintro} to a system of non-linear first-order ODEs and in this paper we provide an essentially complete analysis of its solutions.

We state our two main existence results in the following two theorems. More complete statements are given in Theorems \ref{thm:Coho1:IIB:Deformed} and \ref{thm:Coho1:IIB:Resolved} respectively, which however require notation introduced in the rest of the paper. In the following results, $(\omega_\tu{C},\Omega_\tu{C})$ is the Calabi--Yau cone structure on the conifold, which gives rise to a family $(\omega_\tu{C},\Omega_\tu{C},\phi_\infty)$ of solutions to the IIB system \eqref{eq:IIBintro} by taking a constant dilaton $\phi_\infty\in \R$.

\begin{mtheorem}\label{mthm:IIB:Deformed}
Fix $\kappa>0$ and $\alpha'\neq 0$. Then there exists $p_\ast=p_\ast (\kappa,\alpha')\in \R$ with the following significance. 
\begin{enumerate}
\item There is a 1-parameter family of smooth solutions $\{ (\omega_p,\Omega_p,\phi_p)\}_{p\in\R}$ of the IIB system defined in sufficiently small tubular neighbourhoods of the zero-section in $T^\ast S^3$ satisfying $[e^{-2\phi_p}\Omega_p]=\kappa$ and $[d^c\omega]=\alpha'$ in $H^3(T^\ast S^3;\R)\simeq \R$.
\item For parameter values $p>p_\ast$ the solution $(\omega_p,\Omega_p,\phi_p)$ in (i) extends to a complete solution which is asymptotic at infinity to the cone solution $(\omega_\tu{C},\Omega_\tu{C},\phi_\infty)$ for some $\phi_\infty=\phi_\infty(p)\in \R$ that is monotonically increasing in $p$.
\item The solution $(\omega_p,\Omega_p,\phi_p)$ with $p=p_\ast$ is complete and has a different specific asymptotic geometry that we refer to as a complete CV--MN end. 
\item For parameter values $p<p_\ast$ the solution $(\omega_p,\Omega_p,\phi_p)$ in (i) is incomplete. 
\end{enumerate}
\end{mtheorem}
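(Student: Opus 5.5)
We reduce the IIB system \eqref{eq:IIBintro} to ODEs using the $\sunitary{2}^2$--invariant ansatz on $T^\ast S^3$, with principal orbit $\sunitary{2}^2/\Delta\sunitary{2}\simeq S^2\times S^3$. We fix left-invariant coframes $e_i,e'_i$ on the two factors and write invariant $\sunitary{3}$--structures compatible with the complex structure of the deformed conifold in terms of a small number of functions of the arclength parameter $t$ (the coefficients of $\sum e_i^2$, $\sum e_i'^2$, $\sum e_ie_i'$ and $dt^2$). As in the Calabi--Yau/Candelas--de la Ossa and Chamseddine--Volkov/Maldacena--Nu\~nez analyses, the conditions $d(e^{-2\phi}\Omega)=0$ and $d(e^{-2\phi}\omega^2)=0$ become first-order equations. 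The condition $dd^c\omega=0$ says that the invariant $3$--form $d^c\omega$ is closed, hence equal to $\alpha'$ times the generator of $H^3$ up to an exact invariant form, which gives a further constraint. The cohomological data $[e^{-2\phi}\Omega]=\kappa$ and $[d^c\omega]=\alpha'$ then become conserved quantities, and the system reduces to an autonomous first-order ODE system for a few unknowns, with two integration constants fixed by $(\kappa,\alpha')$. We use scaling symmetries (shifting $\phi$, rescaling $\omega$) to normalise.

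For part (i), we study the singular initial value problem at the singular orbit $S^3$ ($t=0$). We impose the smooth-extension conditions (Eschenburg--Wang type parity conditions on the metric coefficients and on $\phi$) and write the system as $t\,y'=M_{-1}(y)+tM(t,y)$. We then apply the standard singular IVP theorem (Malgrange/Eschenburg--Wang). The linearisation at the initial point should have exactly one free nonnegative eigenvalue direction, in addition to those fixed by $\kappa,\alpha'$. This direction gives the real parameter $p$ (for instance a second-order Taylor coefficient, or the value $\phi(0)$ after the scaling normalisation), and we obtain a unique smooth local solution for each $p\in\R$, depending continuously on $p$.

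For parts (ii)--(iv), we carry out a global analysis of forward solutions. First we identify conserved or monotone quantities, for example a combination that behaves like a Lyapunov function, such as $e^{-2\phi}$ times a volume factor, or the ratio of the two sphere sizes. These show that either a solution exists for all $t$ or some metric coefficient degenerates in finite time. Next we linearise at infinity. The cone solution $(\omega_\tu{C},\Omega_\tu{C},\phi_\infty)$ should be an attracting (open) asymptotic regime, while the CV--MN asymptotics (one $S^2$ growing like $\sqrt t$, linear dilaton) is a codimension-one, saddle-type behaviour. Openness of the AC regime together with openness of the incomplete regime implies, by connectedness of $\R$ and continuous dependence on $p$, that their complementary boundary is nonempty. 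We define $p_\ast=\inf\{p:\text{solution is AC}\}$ and prove the following:
\begin{enumerate}
\item monotonicity in $p$ via a comparison argument, so that the AC set is an interval $(p_\ast,\infty)$ and the incomplete set is $(-\infty,p_\ast)$;
\item that the boundary solution is complete and must therefore have the only remaining asymptotics, CV--MN. We would verify that it coincides with the explicit solution by checking that the explicit CV--MN family satisfies the initial conditions with some value of $p$;
\item that $\phi_\infty(p)$ is increasing, by differentiating in $p$ and using the sign preserved along the flow.
\end{enumerate}

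The main obstacle is this global phase-space analysis: finding enough monotone quantities to control the nonlinear ODE, proving the sharp trichotomy, and showing that the transition happens at a single value $p_\ast$ rather than on a larger set. We would first look for a comparison principle: show that the variation $\partial_p y$ has a fixed sign for all $t$. This would give monotonicity and uniqueness of the threshold simultaneously.
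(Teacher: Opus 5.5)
Your part (i) matches the paper. Once the integrability and Bianchi equations are solved explicitly, $V=\kappa\sinh 3s$ and $u_0$ are known functions of $s$; a further integration constant in $u_0$ is killed by smoothness at the zero-section. One is left with a \emph{non-autonomous} system for $(u,\phi)$. Writing $u=sX$, $\phi=\phi_0+s^2Y$ gives a singular IVP for which, for each $\phi_0$, $M_{-1}(z_0)=0$ has a unique solution and there are no resonances, so $p=\phi(0)$. (Also, the principal orbit is $\sunitary{2}^2/\triangle\unitary{1}$; $\sunitary{2}^2/\triangle\sunitary{2}\simeq S^3$ is the singular orbit.)

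The gap is in (ii)--(iv). Your shooting argument rests on claims you never establish: that the AC and incomplete regimes are open and nonempty, that CV--MN is a codimension-one ``saddle'', and that a complete non-AC member must have CV--MN asymptotics. Linearising at infinity is not routine here, since the coefficients depend on $s$ and CV--MN is not a rest point ($u_\ast$ grows linearly, $\phi_\ast\to-\infty$). More seriously, your mechanism for a single threshold fails. A fixed sign of $\partial_p y$ only orders the family, so the AC and incomplete parameter sets are intervals. It cannot rule out a whole closed interval of complete, non-AC parameters, for instance ordered solutions with $u-u_0$ bounded. What is needed is a growth dichotomy on each side of the CV--MN solution.

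The missing idea is to use the explicit $u_\ast=\tfrac92\alpha's$ as a barrier from the start, not only to identify the limit. By uniqueness in the singular IVP it is the member with $\phi_0=\phi_\ast(0)$, so $p_\ast$ is known, and near $s=0$ the quantities $u,\dot u,\dot u/u$ increase with $\phi_0$. The argument then has three steps.
\begin{enumerate}
\item \emph{Comparison.} Suppose at a first time $\dot u_1=\dot u_2$ while $u_1>u_2$. The equation $\ddot u/\dot u=\partial_s\log V^2-2(u\dot u+u_0\dot u_0)/(u^2-u_0^2)$ forces $\ddot u_1>\ddot u_2$, because $x\mapsto x/(x^2-u_0^2)$ and $x\mapsto 1/(x^2-u_0^2)$ are decreasing and $u_0\dot u_0>0$. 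So $u>u_\ast$ and $\dot u>\dot u_\ast$ persist for $p>p_\ast$, with the reverse inequalities for $p<p_\ast$. Moreover $e^{4\phi}=(u^2-u_0^2)\dot u/2V^2$ gives $\phi_1>\phi_2$, hence monotonicity of $\phi_\infty$.
\item \emph{Global existence.} Since $\phi$ is decreasing, a solution exists as long as $u-u_0$ stays away from $0$. Hence every solution above $u_\ast$ is global.
\item \emph{Growth dichotomy.} Set $R_\delta=u_\ast^{1+\delta}/u$ and $S_\delta=(1+\delta)\dot u_\ast/u_\ast-\dot u/u$. At any zero of $S_\delta$, for large $s$, the sign of $\dot S_\delta$ is that of $R_0-1$, because $\frac{2u_0}{u_\ast^2-u_0^2}\bigl(\dot u_0+u_0\dot u_\ast/u_\ast\bigr)\to 6$.
\end{enumerate}
Above $u_\ast$ this gives $u\gtrsim u_\ast^{1+\delta}$. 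Then $\dot\phi\in L^1$, $\phi\to\phi_\infty$ and $u^2\dot u\approx\tfrac12\kappa^2e^{4\phi_\infty}e^{6s}$, so $u\approx r^2$ and the solution is AC. Below $u_\ast$, a putative global solution would satisfy $u\lesssim u_\ast^{1+\delta}$ with $\delta<0$, contradicting $u>u_0\sim\tfrac92\alpha's$. This pins the threshold to the single value $p_\ast=\phi_\ast(0)$ and makes openness and connectedness arguments unnecessary.
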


The solution in (iii) is explicit and was found in the physics literature by Maldacena--Nu\~nez \cite{Maldacena:Nunez} building on work by Chamseddine--Volkov \cite{Chamseddine:Volkov}. Borrowing notation already used in the physics literature, we refer to it as the \emph{CV--MN solution} on the deformed conifold. The CV--MN asymptotic conditions are defined more precisely in Definition \ref{def:CVMN:Ends}: they require $e^\phi\ra 0$ along the complete end and the metric to be of the form
\begin{equation}\label{eq:CV:MN:intro}
ds^2 + s g_{S^2} + g_{S^3}.
\end{equation}
This metric is defined on $(0,\infty)\times\Sigma$, with first factor parametrised by the variable $s$ and $\Sigma=\sunitary{2}^2/\triangle\unitary{1}$ thought of as an $S^3$--bundle over $S^2$; the family of metrics $s g_{S^2} + g_{S^3}$ on $\Sigma$ involves the choice of a connection on this bundle, which is the one induced by the Chern connection of $\mathcal{O}_{\C\PP^1}(1)\ra \C\PP^1\simeq S^2$.

It turns out (see Lemma \ref{lem:No:Smoothness:Resolutions}) that there cannot be any smooth $\sunitary{2}^2$--invariant solution of the IIB system on the resolved conifold (nor on the crepant resolution $K_{\mathbb{CP}^1 x \mathbb{CP}^1}$ of the $\mathbb{Z}_2$-quotient of the conifold, that a priori would also be a candidate to carry $\sunitary{2}^2$-invariant solutions of the IIB system). Instead we use the CV--MN solution to define a distinguished singular behaviour for solutions defined on exterior domains $\tu{C}_R = \{ r>R\}\subset \tu{C}$ in the conifold $\tu{C}$. Here $r$ is the radial distance computed using the Calabi--Yau cone metric on $\tu{C}$. We then obtain the following companion existence result for the complex structure of the conifold $\tu{C}$.

\begin{mtheorem}\label{mthm:IIB:Resolved}
Fix $b,\alpha'\neq 0$. Then there exist $q_\ast=q_\ast (b,\alpha')\in \R$, $R_0=R_0(\alpha',b)>0$ and a decreasing continuous function $R\co \R\ra (0,R_0)$ with the following significance. 
\begin{enumerate}
\item There is a 1-parameter family of solutions $\{ (\omega_q,\Omega_q,\phi_q)\}_{q\in\R}$ of the IIB system defined in sufficiently thin annuli $A_q = \{ R(q)<r<R'\}\subset \tu{C}$ with prescribed incomplete behaviour along the interior boundary $\{ r=R(q)\}$ and satisfying $[d^c\omega_q]=\alpha'$ in $H^3(C_{R(q)};\R)\simeq \R$ and $[\omega_q-\sigma]=b\in H^2(C_{R(q)};\R)\simeq \R$. Here $d\omega_q$ is independent of $q$ and $\sigma$ is an explicit 2-form with $d\sigma=d\omega_q$.
\item For parameter values $q>q_\ast$ the solution $(\omega_q,\Omega_q,\phi_q)$ in (i) extends to a forward complete solution on $\tu{C}_{R(q)}$ which is asymptotic at infinity to the model solution $(\omega_\tu{C},\Omega_\tu{C},\phi_\infty)$ for some $\phi_\infty=\phi_\infty(q)\in \R$ that is monotonically increasing in $q$.
\item The solution $(\omega_q,\Omega_q,\phi_q)$ with $q=q_\ast$ is forward complete on $\tu{C}_{R(q^\ast)}$ and has a complete CV--MN end at infinity. 
\item For parameter values $q<q_\ast$ the solution $(\omega_q,\Omega_q,\phi_q)$ in (i) is forward incomplete. 
\end{enumerate}
\end{mtheorem}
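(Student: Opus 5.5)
The plan is to reduce Theorem \ref{mthm:IIB:Resolved} to a phase-plane analysis of the $\sunitary{2}^2$--invariant ODE system, run in parallel with the deformed case but with a singular initial condition replacing smoothness at a singular orbit. Write an invariant $\sunitary{2}^2$--structure on $(a,\infty)\times \sunitary{2}^2/\triangle\unitary{1}$ in terms of the standard invariant 1-forms and the coefficient functions of $\omega$, $\Omega$ (with a radial arclength/holomorphic radial variable $t$) and the dilaton. The closedness conditions $d(e^{-2\phi}\Omega)=0$, $d(e^{-2\phi}\omega^2)=0$, $dd^c\omega=0$ then become first-order ODEs. The conserved quantities are the cohomological data: $[d^c\omega]=\alpha'$ and $[\omega-\sigma]=b$. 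Here $\sigma$ is chosen so that $d\omega$, which is determined by $\alpha'$ and the complex structure of $\tu{C}$, is fixed independently of $q$. Imposing these conserved quantities reduces the system to a planar (or low-dimensional autonomous) system in two unknowns.

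First, I construct the local family in (i). I take the CV--MN solution on the resolved side as the model for the incomplete behaviour at $r=R$, and write the prescribed behaviour at the interior boundary as a singular initial value problem of Briot--Bouquet type. Linearising at the singular point should show that the solutions with this behaviour form a 1-parameter family parametrised by a free coefficient $q$ in the expansion. Existence, and continuous dependence on $q$ and on the position $R(q)$, then follow from the standard Malgrange/Eschenburg--Wang type theorem for singular IVPs. Monotonicity and continuity of $R(q)$ should come from a scaling or translation relation in the radial variable, since fixing $b,\alpha'$ breaks the scaling symmetry. The upper bound $R_0$ comes from the requirement that the leading coefficients stay positive.

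Next, I study the forward evolution. I look for monotone quantities: a Lyapunov-type function, or a comparison showing that one ratio of metric coefficients (the $S^2$ versus $S^3$ sizes) is monotone. With these, a forward-complete trajectory must converge either to the fixed point corresponding to the conical Calabi--Yau end $(\omega_\tu{C},\Omega_\tu{C},\phi_\infty)$ or to the CV--MN end of Definition \ref{def:CVMN:Ends}. Otherwise it hits a boundary of the admissible region (a metric coefficient vanishing, or $e^\phi$ blowing up) in finite time, and is then forward incomplete. Both open conditions — convergence to the cone and finite-time breakdown — are stable under perturbation of $q$. By continuous dependence, the sets $\{q: \text{AC}\}$ and $\{q:\text{incomplete}\}$ are open. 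A comparison argument (solutions ordered in $q$ stay ordered) should show they are intervals $(q_\ast,\infty)$ and $(-\infty,q_\ast)$, with $q_\ast$ finite because both sets are nonempty, as checked by limits $q\to\pm\infty$. The separating value $q_\ast$ must then follow the remaining, saddle-type asymptotic behaviour, which is the CV--MN end. The same ordering gives monotonicity of $\phi_\infty(q)$.

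The main obstacle I expect is the asymptotic analysis at infinity: proving the trichotomy (every forward-complete orbit is AC or CV--MN) and the ordering/comparison principle in $q$. For this I would first try to find an explicit first integral or monotone combination of the reduced ODE. Failing that, I would do a careful phase-plane analysis of the compactified system near its fixed points at infinity, identifying the CV--MN end as the stable manifold of a hyperbolic fixed point. The conical end should instead be an attracting node, so that the AC behaviour is open.
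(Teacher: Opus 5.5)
Your ODE reduction and singular-IVP treatment of the interior end are in the right spirit. The global forward analysis, however, has a genuine gap. It is the step you yourself flag as the main obstacle, and as written it is a programme rather than an argument.

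\textbf{The gap.} You obtain $q_\ast$ only as a boundary point between two sets claimed to be open. To conclude, you then need three things.
\begin{enumerate}
\item[(a)] A trichotomy: every forward-complete trajectory is either AC or has a CV--MN end.
\item[(b)] An ordering of the \emph{whole} family in $q$.
\item[(c)] A transversality statement ensuring the complement of the two open sets is a single point rather than a closed interval.
\end{enumerate}
None of these is proved. Moreover, (b) cannot be started with the natural comparison principle. Comparing solutions of the second-order ODE of Remark~\ref{rmk:Coho1:IIB} through the pair $u_1>u_2$, $\dot u_1>\dot u_2$ requires $u_0\dot u_0>0$ or $\dot u_i>\dot u_0$. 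Members with $c<1$ satisfy $\dot u<\dot u_0$ throughout. For example, take $c_2<c_1<1$: at the initial time of the $c_2$--solution one has $u_1>|u_0|=u_2$ but $\dot u_1<\dot u_0=\lim\dot u_2$.

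There is also a problem in the local step. Linearising at the singular end gives indicial roots $-2$ and $-\tfrac32$, so no free coefficient appears in an expansion around the CV--MN behaviour. The parameter must be built into the singular model itself, as the Berger squashing $c$ in $u_c=u_0+\tfrac32c\alpha'$, and $(u_c,\phi_c)$ solves the system only for $c=1$. Finally, the reduced system is non-autonomous, and the AC end is not an isolated attracting node, since $\phi_\infty$ is a free modulus.

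\textbf{The missing idea.} This makes (a)--(c) unnecessary. The CV--MN solution $u_\ast=\tfrac92\alpha's+b+\tfrac32\alpha'$ of Definition~\ref{def:CVMN:Resolved} is explicit. By uniqueness in Theorem~\ref{thm:Singular:BVP} it is exactly the $c=1$ member of the family, so $q_\ast=\log 1=0$ and $u_\ast$ serves as a barrier.
\begin{itemize}
\item Since $\dot u_c=\dot u_0=\dot u_\ast$, evaluate $\ddot u/\dot u=6-(2u\dot u+2u_0\dot u_0)/(u^2-u_0^2)$ at a first time where $\dot u=\dot u_c$. This gives $6-2\dot u_0/(u-u_0)$, which is $>6(1-c^{-1})>0$ if $c>1$ and $<6(1-c^{-1})<0$ if $c<1$.
\item Hence $u-u_c$ and $\dot u-\dot u_c$ keep the sign of $c-1$ for all time (Lemma~\ref{lem:Coho1:IIB:Resolved:IVP}). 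Since $u_c-u_\ast=\tfrac32(c-1)\alpha'$, each member stays on one side of $u_\ast$. When $c>1$, the bound $u-u_0>\tfrac32\alpha'$ gives long-time existence.
\item The asymptotics come from monotonicity of $R_\delta=u_\ast^{1+\delta}/u$ (Propositions~\ref{prop:Growth:CV:MN} and~\ref{prop:Growth:CV:MN:I}), not from a phase portrait. For $c>1$, $u$ grows superlinearly, so $u_0/u\to0$, $\dot\phi\in L^1$, $\phi\to\phi_\infty$, and $u^2\dot u\approx 2e^{4\phi_\infty}e^{6s}$, i.e.\ $u\sim r^2$. For $c<1$, a forward-complete solution would grow sublinearly, contradicting $u>u_0$.
\item Monotonicity of $\phi_\infty$ needs ordering only among the $c>1$ members. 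There $\dot u_i>\dot u_0$, so Proposition~\ref{prop:IIB:Comparison:CV:MN:Resolved} applies.
\end{itemize}
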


The prescribed incomplete behaviour along $\{ r=R(q)\}$ in part (i) is modelled on (a variation of) the incomplete end $s\ra 0$ of \eqref{eq:CV:MN:intro}. The solution in (iii) was also found by Maldacena--Nu\~nez \cite{Maldacena:Nunez} and we refer to it as the CV--MN solution on the resolved conifold (see \cite{PT} for the relation to complex geometry). 

\begin{remark*}
In Theorems \ref{mthm:IIB:Deformed} and \ref{mthm:IIB:Resolved} the parameter $\alpha'$ is identified with the cohomology class of $d^c\omega=-H$ and in fact determines it explicitly together with the choice of complex structure. In particular, assuming $\alpha'\neq 0$ implies that our solutions cannot be K\"ahler, while solutions with $\alpha'=0$ must necessarily be K\"ahler. In physical terms, we also note that our parameter $\alpha'$ corresponds to a $\tu{NS}5$--brane charge in Type IIB string theory and should not be confused with the slope parameter which appears in the Bianchi identity for the Hull--Strominger system.
\end{remark*}

Because they are asymptotic to the Candelas--de la Ossa metric on the conifold, the generic (forward) complete members of the family in Theorems \ref{mthm:IIB:Deformed} and \ref{mthm:IIB:Resolved} have the property that the Bismut connection has full holonomy $\tu{SU}(3)$. This is in contrast with the compact case \cite{GFJS,ABLS26}, where the holonomy of the Bismut connection of any non-K\"ahler Bismut--Hermitian--Einstein metric (BHE) is strictly contained in $\tu{SU}(n)$, where $n$ is the complex dimension. To our knowledge, Theorem \ref{mthm:IIB:Deformed} provides the first non-compact, complete, non-K\"ahler examples of the BHE system with full holonomy (\cf \cite{ALL26} in the compact case).

\begin{mtheorem}\label{mthm:Bismut}
There exist infinitely many complete, non-compact, non-K\"ahler, Bismut--Hermitian--Einstein metrics in complex dimension $3$ with full holonomy $\tu{SU}(3)$.
\end{mtheorem}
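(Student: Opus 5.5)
The metrics will come from Theorem~\ref{mthm:IIB:Deformed}(ii): the generic complete members $(\omega_p,\Omega_p,\phi_p)$, $p>p_\ast$, of the family on the deformed conifold $T^\ast S^3$. The proof then has four steps: check each such solution is BHE, check it is non-K\"ahler, show the Bismut holonomy is all of $\tu{SU}(3)$, and show that infinitely many of these metrics are pairwise non-isometric.

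\emph{BHE and non-K\"ahler.} Any solution of the IIB system \eqref{eq:IIBintro} is a steady generalized Ricci soliton; this is Proposition~\ref{prop:soliton} together with the discussion in the introduction. Hence its Hermitian metric is Bismut--Hermitian--Einstein in the sense of \cite{GFJS}. Completeness and non-compactness are part of Theorem~\ref{mthm:IIB:Deformed}(ii). Non-K\"ahlerness follows from $\alpha'\neq 0$: the class $[d^c\omega_p]=\alpha'\neq 0$ in $H^3$, so $d^c\omega_p\neq 0$ and hence $d\omega_p\neq 0$.

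\emph{Full holonomy.} Since $e^{-2\phi}\Omega$ is closed and $\phi$ is the dilaton, the Bismut connection $\nabla^B$ preserves $\omega$ and a suitably normalised holomorphic volume form. Thus $\Hol(\nabla^B)\subseteq \tu{SU}(3)$ (the standard fact for BHE/IIB solutions). For the reverse inclusion I would argue by contradiction.
\begin{itemize}
\item Suppose $\Hol(\nabla^B)$ is a proper subgroup $K\subsetneq\tu{SU}(3)$.
\item By the Ambrose--Singer theorem, the Lie algebra of $K$ contains every curvature endomorphism $R^B_x(X,Y)$, parallel-transported to a base point.
\item Evaluate far out on the asymptotically conical end. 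By Theorem~\ref{mthm:IIB:Deformed}(ii), the rescaled solution converges to the Calabi--Yau cone $(\omega_\tu{C},\Omega_\tu{C})$. The torsion $H$ is a closed $3$-form representing the fixed class $\alpha'$, so it decays like the harmonic representative, $|H|=O(r^{-3})$. Consequently, after rescaling the ball of radius $r/2$ at a point at distance $r$, the curvature $R^B$ converges to the Levi-Civita curvature of the Candelas--de la Ossa cone metric.
\item That cone metric is Ricci-flat with holonomy exactly $\tu{SU}(3)$, since the link $T^{1,1}$ is not a round quotient. So its curvature operators at a single point span $\mathfrak{su}(3)$, or at least, combined with the cone's local holonomy, span an algebra equal to $\mathfrak{su}(3)$.
\item Holonomy algebras are closed under limits in this sense: the span of the curvature at nearby points is lower semicontinuous. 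Hence for $r$ large the span of $R^B$ at a point already equals $\mathfrak{su}(3)$, which contradicts $K\neq\tu{SU}(3)$.
\end{itemize}
A cleaner way to run this argument is through the subspace spanned by $\{R^B(X,Y)\}$ at one point, which is open-condition stable.

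\emph{Infinitely many.} Distinct values of $p>p_\ast$, or of $\kappa$, give non-isometric metrics. Scaling is only a symmetry up to the normalisations fixed in Theorem~\ref{mthm:IIB:Deformed}. To see this I would use scale-invariant quantities, for instance the ratio of the volume of the zero-section $S^3$ to $\alpha'^{3/2}$. Alternatively, $\phi_\infty(p)$ is strictly monotone, and the quantity $e^{\phi_\infty}$ relative to the value of $e^{\phi}$ on $S^3$ is isometry invariant, because the dilaton is determined by the metric up to an additive constant via the IIB equations.

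\emph{Main obstacle.} I expect the hardest step to be verifying that the Bismut curvature at large radius really spans $\mathfrak{su}(3)$. This requires the convergence rates in Theorem~\ref{thm:Coho1:IIB:Deformed}: the metric must converge to the cone at some positive rate together with its second derivatives, and $H$ together with $\nabla H$ must decay faster than the $r^{-2}$ scale of the cone curvature. The cone curvature itself is nonzero and spans $\mathfrak{su}(3)$, which can be checked explicitly on $T^{1,1}$ using the homogeneous structure. Should pointwise spanning fail, I would fall back on the alternative that any proper holonomy reduction gives a parallel tensor for $\nabla^B$, which would pass to the limit cone and reduce its holonomy, contradicting irreducibility of the Candelas--de la Ossa metric.
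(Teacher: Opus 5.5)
Your BHE and non-K\"ahler steps are fine. The main holonomy argument, however, rests on a false claim: the Levi-Civita curvature of a metric cone never spans $\mathfrak{su}(3)$ at a single point. On any cone the position field $P=r\partial_r$ satisfies $\nabla^{\tu{C}}P=\mathrm{Id}$, hence $R^{\tu{C}}(X,Y)P=0$ for all $X,Y$. Since $R^{\tu{C}}(X,Y)$ commutes with $J$, it also kills $JP$. So at each point of $\tu{C}$ every curvature endomorphism lies in the copy of $\mathfrak{su}(2)\subset\mathfrak{su}(3)$ annihilating $P$, and the explicit check on $T^{1,1}$ you propose would fail. Lower semicontinuity of the pointwise span can therefore give at best $\dim\operatorname{span}\{R^B_x(X,Y)\}\geq 3$ far out, not the $8$ you need. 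This is exactly why the one-point Ambrose--Singer argument of Proposition~\ref{propo:chs:hol} works for $\sunitary{2}$ in the CHS setting but does not transfer here. Your hedge ``combined with the cone's local holonomy'' could only be made rigorous by parallel-transporting curvature from finitely many points along paths in a rescaled ball, together with $C^1$ convergence of the rescaled $\nabla^B$ to $\nabla^{\tu{C}}$; that is not what you wrote. Separately, $|H|=O(r^{-3})$ does not follow from $H$ being closed in a fixed class, since any exact form can be added. It follows from the explicit formulas \eqref{eq:dcomegaBfield} and \eqref{eq:Bfield}.

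Your fallback is the argument the paper actually uses, and it needs far less than you feared. Every proper connected subgroup of $\sunitary{3}$ is one of $\unitary{2}$, $\tu{SO}(3)$, $\sunitary{2}$, $T^2$, $\unitary{1}_{p,q}$; connectedness is automatic since $T^\ast S^3$ is simply connected. Each of these fixes a $2$- or $3$-form orthogonal to the algebra generated by $\omega,\Real\Omega,\Imag\Omega$. Parallel transport produces a $\nabla^B$-parallel section of this complement, of constant nonzero norm because $\nabla^B$ is metric. The AC rates give $\nabla^B=\nabla^{\tu{C}}+O(r^{-1+\tau})$ for every $\tau>-2$. Taking $\tau<0$, the error is integrable along rays, so the form converges to a nonzero $\nabla^{\tu{C}}$-parallel form in the corresponding complement on the cone. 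This contradicts $\Hol(\nabla^{\tu{C}})=\sunitary{3}$ (Gallot plus Berger). Only $C^0$ control of the connection at an integrable rate is used; no convergence of curvature or of $\nabla H$ is needed.

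Finally, in the counting step, distinct $\kappa$ do not give distinct metrics. The dilaton shift $\phi\mapsto\phi+\psi$ sends $(\kappa,p)$ to $(e^{-2\psi}\kappa,p+\psi)$ without changing $(\omega,\Omega)$. Modulo this and scaling there is one genuine parameter, e.g.\ $p$ at fixed $(\alpha',\kappa)$. Monotonicity of $\phi_\infty(p)$ also does not show that your proposed invariant $\phi_\infty(p)-p$ separates the metrics.
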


In fact, in Proposition \ref{propo:chs:hol} we also show that there are infinitely many complete examples in complex dimension 2 obtained via the so-called Callan--Harvey--Strominger ansatz \cite{Callan:Harvey:Strominger} (see below). In contrast to the 3-dimensional ones of Theorem \ref{mthm:IIB:Deformed}, these examples have at least two ends.

\begin{remark*}
The symmetric ansatz forces a specific normal form for the Hermitian form
$$
 \omega = \tfrac{1}{2}d\tilde \eta + \omega'.
$$
and the torsion of the Bismut connection
\begin{equation}\label{eq:dcomegaBfieldintro}
H := - d^c \omega = \alpha' H_0 + dB.
\end{equation}
Here, $\tilde \eta$ is an invariant 1-form on the locus $M^*$ of principal orbits and $\omega'$ is a coclosed but not closed decaying harmonic $2$-form with respect to the asymptotically conical Calabi--Yau metric on the (deformed) conifold due to Stenzel/Candelas--de la Ossa.
Furthermore, $H_0$ is co-closed with respect to the homogeneous Sasaki--Einstein structure on $\Sigma$, and the \emph{$b$-field} $B$ is of type $(2,0) + (0,2)$.
\end{remark*}

\begin{remark*}
 Exploiting the analysis of local solutions in Proposition \ref{prop:Coho1:IIB:Deformed:IVP}, one can show that in the limit $\alpha' \to 0$ suitable rescalings of the family of smooth solutions of Theorem \ref{mthm:IIB:Deformed} converges to the Stenzel asymptotically conical Calabi--Yau structure on $T^\ast S^3$ (with period $\kappa$ of the holomorphic volume form) together with a constant dilaton $\phi_\infty\in \R$. We expect the same statement to hold for the family of Theorem \ref{mthm:IIB:Resolved} (with the Candelas--de la Ossa Calabi--Yau metric instead of Stenzel's one and the parameter $b$ corresponding to the choice of K\"ahler class), although the convergence would only be on the complement of the zero-section $S$ because of the presence of singularities. This uses the fact that in Theorem \ref{mthm:IIB:Resolved}(i) the function $R(q)$ satisfies $\lim_{q\ra\infty} R(q)=0$. 
\end{remark*}

\begin{remark*}
Maldacena--Martelli \cite{Maldacena:Martelli}, based on numerical solutions and heuristic matched asymptotics expansions, predicted the existence of a 1-parameter family of solutions on $T^\ast S^3$ that are asymptotically conical, with the CV--MN solution appearing as a limit. Our Theorem \ref{mthm:IIB:Deformed} can be regarded as a rigorous confirmation of this physical expectation.
\end{remark*}

\begin{remark*}
Our existence results were motivated by an analogy with recently established results about the \emph{IIA system}, whose 6-dimensional solutions $(M,\omega,\Omega,\phi)$ are the dimensional reductions of circle-invariant $\tu{G}_2$--holonomy metrics on the total space of a circle bundle over $M$. In fact, the solutions of Theorems \ref{mthm:IIB:Deformed} and \ref{mthm:IIB:Resolved} are expected to be ``mirrors'' to the families $\mathbb{D}_7$ and $\mathbb{B}_7$ of circle-invariant comohomogeneity one $\tu{G}_2$--holonomy metrics constructed in \cite{FHN:Coho1}, whose circle reductions are $\sunitary{2}^2$--invariant solutions of the IIA system with, respectively, Ramond--Ramond 2-form flux on the resolved conifold and a $\tu{D}6$--brane wrapped on the zero-section $S^3$ of the deformed conifold \cite{Vafa,AMV}.
\end{remark*}

\subsection*{Plan of the paper} We conclude this introduction with a brief outline of the content of the paper. As already mentioned, our strategy is to impose $\sunitary{2}^2$ symmetry and study cohomogeneity one solutions of the IIB system.

In Section 2 we derive the ODE system that describes $\sunitary{2}^2$--invariant solutions of the IIB system on the set of principal orbits $\sunitary{2}^2/\triangle\unitary{1}$. The cohomogeneity one problem can be equivalently formulated as a first order system for a pair of functions $(u,\phi)$ or a single second-order ODE for the function $u$ with coefficients that in both cases depend on the choice of complex structure and the cohomology class $\alpha'$ of $d^c\omega=-H$. Along the way we also establish that:
\begin{enumerate}
\item the cohomogeneity one set-up we consider is the only one yielding non-K\"ahler cohomogeneity one solutions of the IIB system, extending work of Alonso--Salvatore \cite{Izar,Izar:correction};
\item under our symmetry assumption, the complex structure and holomorphic complex volume form $e^{-2\phi}\Omega$ can only be the one of the deformed or resolved conifold and that once this is fixed then $H=-d^c\omega$ is completely and explicitly determined by its cohomology class $\alpha'$;
\item every $\sunitary{2}^2$--invariant solution of the IIB system with the complex structure of the resolved conifold (or the crepant resolution $K_{\C\PP^1\times\C\PP^1}$ of the $\Z_2$--quotient of the conifold) cannot extend smoothly over the zero-section and in fact solves the more general version of the \emph{Bianchi identity} 
$dd^c\omega= - \delta$ sourced by a singular current $\delta$ supported on the zero-section.
\end{enumerate}

In Section 3 we discuss explicit singular solutions of the IIB system. We first consider the local model $\C\times (\C^2\setminus\{ 0\})$ (or more generally the complement of finitely many points in $\C^2$) with modified Bianchi identity $dd^c\omega= - \delta$ for $\delta$ a multiple of the current of integration along the holomorphic curve $\C\times \{ 0\}$. Such solutions arise from a simple construction first exploited in the physics literature by Callan--Harvey--Strominger \cite{Callan:Harvey:Strominger} reminiscent of the Gibbons--Hawking construction of 4-dimensional hyperk\"ahler metrics with a triholomorphic circle action. A calculation of the holonomy of the Bismut connection of these examples provides infinitely many BHE metrics in complex dimension $2$ with at least two ends and full holonomy $\tu{SU}(2)$. Attempting to fibre the simplest such local solution over $\C\PP^1$ using the Chern connection of $\mathcal{O}_{\C\PP^1}(1)$ then leads us to recover the explicit CV--MN solutions \cite{Maldacena:Nunez} of parts (iii) in Theorems \ref{mthm:IIB:Deformed} and \ref{mthm:IIB:Resolved}. We also give the definition of CV--MN complete and singular ends.

In Section 4 we establish parts (i) of Theorems \ref{mthm:IIB:Deformed} and \ref{mthm:IIB:Resolved} using the machinery of singular boundary initial value problems that is by now well-established in the cohomogeneity one literature.

Section 5 is the analytic heart of the paper where the proof of Theorems \ref{mthm:IIB:Deformed} and \ref{mthm:IIB:Resolved} is completed. The qualitative analysis of the families of local solutions constructed in Section 4 is based on two main results:
\begin{enumerate}
\item comparison results that allow us to establish the completeness of solutions lying ``above'' the CV--MN solution and the monotonicity of $\phi_\infty$ in parts (ii) of Theorems \ref{mthm:IIB:Deformed} and \ref{mthm:IIB:Resolved};
\item study of the ratio $\frac{u_\ast^{1+\epsilon}}{u}$ between a solution $u$ and the CV--MN solution $u_\ast$ to control the asymptotic growth of $u$ and deduce the asymptotically conical behaviour in parts (ii) and incompleteness statement in parts (iv) of our main theorems. 
\end{enumerate}

Finally, in Section 5.4 we prove Theorem \ref{mthm:Bismut} and collect some properties and observations about the families of
solutions produced in Theorems \ref{mthm:IIB:Deformed} and \ref{mthm:IIB:Resolved}. We speculate that these families provide a metric realisation of the conifold transition at the level of solutions to the IIB system.
We hope to provide a complete answer to this question in future work.

\subsection*{Acknowledgements} After the main results of this work were completed, we became aware of the overlapping work of F. Podest\`a and A. Raffero \cite{PR}, which establishes a version of our Theorem \ref{mthm:IIB:Deformed}, with an essentially equivalent proof, starting from the a priori more general BHE condition. We thank Fabio and Alberto for sharing their paper. The authors also thank Anna Fino for comments on an earlier draft of this paper and Jeff Streets, Vestislav Apostolov and Jason Lotay for interest in this work. LF would also like to thank Thomas Madsen, who was the first to mention to him the interest of considering the holonomy of the Bismut connection of these examples. The authors are grateful to the  Department of Computer Science and Technology of the University of Cambridge, for the hospitality during their visit in the summer 2023, when the results in Section 2 were completed. 

\section{The equations}

In this preliminary section we introduce the ODE system corresponding to $\sunitary{2}^2$--invariant solutions of the IIB system.

\subsection{The IIB system and generalized Ricci solitons}

Let $M$ be a smooth $6$-manifold. An $\tu{SU}(3)$--structure on $M$ is given by a pair $(\omega,\Omega)$, such that $\omega$ is a non-degenerate $2$-form, $\Omega$ is a locally totally decomposable complex $3$-form, with associated almost complex structure $J$, the pair $(\omega,\Omega)$ satisfies the pointwise constraints 
$$
\omega\wedge\Omega = 0, \qquad 2\omega^3=3\Real\Omega\wedge\Imag\Omega,
$$ 
and the induced symmetric tensor $g(\,\cdot\, ,\,\cdot\,) = \omega (\,\cdot\, , J\,\cdot\,)$ is positive definite. Recall that $\Omega$ determines an almost complex structure $J$ on $M$, such that $T^{0,1}_JM$ is the annihilator of $\Omega$ by contraction. Alternatively, a $1$-form $\alpha$ is of type $(1,0)$ with respect to $J$ if and only if $\alpha\wedge\Omega=0$. In particular, the first pointwise constraint above implies that $\omega$ is of type $(1,1)$. For $k$-form $\beta$, we will use the notation $J\beta = (-1)^k \beta(J\,\cdot\,,\dots,J\,\cdot\,)$.

In this paper we are interested in $\tu{SU}(3)$--structures on a six-dimensional manifold solving a natural system of PDE, as given in the following definition.

\begin{definition}\label{def:IIBsystem}
Let $\delta$ be an exact $4$-form on $M$. We say that an $\tu{SU}(3)$--structure $(\omega,\Omega)$ solves the \emph{IIB system with source $\delta$} if the following PDE system is satisfied
\begin{equation}\label{eq:IIB}
d(e^{-2\phi}\Omega)=0, \qquad d(e^{-2\phi}\omega^2)=0, \qquad dd^c\omega= - \delta.
\end{equation}
\end{definition}

The most basic consequence of the IIB system is the integrability of the complex structure $J$, as it follows from the first equation that
$$
d\Omega = 2d\phi \wedge \Omega.
$$
The associated complex manifold is furthermore Calabi--Yau, in the sense that it admits a global holomorphic trivialisation of the canonical bundle, that is, a global holomorphic volume form $e^{-2\phi}\Omega$. The second equation in the system is often known as the \emph{conformally balanced} condition. The specific choice of conformal factor in this equation has important consequences on the Bismut connection of the solution, namely
$$
\nabla^B = \nabla^g - \frac{1}{2}g^{-1}d^c\omega
$$
has holonomy contained in $\tu{SU}(3)$, where $\nabla^g$ denotes the Levi-Civita connection of $g$. Due to its origins in string theory, the torsion $3$-form $H$ of the Bismut connection of a solution has a special significance, in particular solving the equations
$$
H = - d^c\omega, \qquad dH = \delta.
$$
The source $\delta$ on the right hand side of the \emph{Bianchi identity} (second equation above) has to be thought of as fixed, and naturally dictated by the topology and geometry of the background manifold $M$. We will mainly focus on the most canonical case $\delta = 0$, but, as we will see, our analysis will naturally lead us to solutions where $\delta$ is a distributional source located on a compact holomorphic curve on the manifold. This is in fact the type of behaviour expected from string theory, where the support of the distributional source relates to non-perturbative effects, namely, D5--branes or NS5--branes. In the first case, $H$ is interpreted as the Ramond--Ramond field strength, with natural balanced metric $\omega' = e^{-\phi}\omega$, while in the latter $H$ is the more familiar NS--flux, with conformally balanced metric $\omega$ as presented above \cite{Tomasiello,Martucci}.

Further mathematical motivation for the IIB system in the case $\delta = 0$ is given by generalized geometry, where solutions can be interpreted as steady solitons for the generalized Ricci flow \cite{GFSBook}. Notice that, in particular, any such solution is Bismut--Hermitian--Einstein in the sense of \cite{GFJS}, that is, it is a solution of the system
$$
\rho^B(\omega) = 0, \qquad dd^c\omega = 0,
$$
where $\rho^B(\omega)$ is the \emph{Bismut Ricci form} of the Hermitian structure. Recall here that $\rho^B(\omega)$ is defined as the imaginary constant times the curvature of the Bismut connection on the anti-canonical bundle. This fact is a direct consequence of the holonomy reduction produced by the conformally balanced equation. We summarise the basic upshots of this observation in the next proposition, following closely \cite{GFJS,GFSBook}.

\begin{prop}\label{prop:soliton}
Let $(\omega,\Omega)$ be a solution of the \emph{IIB system} with vanishing source $\delta = 0$. Then, the metric $g$ is a \emph{steady gradient generalized Ricci soliton}, that is, it solves the system of equations
\begin{gather}\label{eq:soliton}
 \begin{split}
\tu{Ric}_g  - \tfrac{1}{4} H^2 + \tfrac{1}{2} L_{\theta^{\sharp}} g =& 0,\\
d^* H +  i_{\theta^{\sharp}} H =& 0,
 \end{split}
\end{gather}
where $H = - d^c\omega$ and  $\theta := - d^* \omega \circ J = 2d\phi$ is the Lee form. Consequently, one has
\begin{equation}\label{eq:conservedsolitons}
|H|^2 - 2\Delta \phi - 4 |d\phi|^2 = 0,
\end{equation}
and hence compact solutions of \eqref{eq:IIB} with $\delta = 0$ are necessarily K\"ahler Ricci-flat and have $d\phi = 0$.
\end{prop}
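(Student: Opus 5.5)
Since $d(e^{-2\phi}\Omega)=0$ forces $J$ to be integrable and $e^{-2\phi}\Omega$ to be a holomorphic volume form, the strategy is to use standard identities for Hermitian manifolds with a holomorphic volume form. The plan has three steps: (1) compute the Lee form; (2) identify the Bismut Ricci form and show it vanishes; (3) translate $\rho^B=0$ together with $dH=0$ into the generalized Ricci soliton equations, following \cite{GFJS,GFSBook}, and then derive \eqref{eq:conservedsolitons} from the soliton system.

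For step (1), expand $d(e^{-2\phi}\omega^2)=0$ as $d\omega^2=2\,d\phi\wedge\omega^2$. Since the Lee form is characterised by $d\omega^{n-1}=\theta\wedge\omega^{n-1}$ (with $n=3$), this gives $\theta=2d\phi$. One should check that this agrees with the convention $\theta=-d^*\omega\circ J$, which is a pointwise linear algebra identity. For step (2), recall the formula $\rho^B=\rho^C - dd^*\omega$ (up to normalisation conventions, equivalently $\rho^B=\rho^C-\tfrac12 dJ\theta$), where $\rho^C$ is the Chern Ricci form. The volume form $e^{-2\phi}\Omega$ is holomorphic, so its pointwise norm is $|e^{-2\phi}\Omega|^2=c\,e^{-4\phi}$, since $|\Omega|$ is constant for an $\mathrm{SU}(3)$--structure. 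Computing the Chern connection on $K_M$ in this holomorphic frame gives $\rho^C=-dd^c\log|\cdot|$-type term, namely a multiple of $dd^c\phi$. Since $J\theta=2Jd\phi=\pm 2d^c\phi$, the two terms cancel and $\rho^B=0$. Equivalently, the Bismut connection preserves $e^{-2\phi}\Omega$ after a suitable rescaling, so its holonomy lies in $\mathrm{SU}(3)$; this is the holonomy reduction mentioned before the statement. Sign conventions are the main bookkeeping issue here.

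Step (3) is the main obstacle. The general identity from \cite{GFJS} (proved there by decomposing the Bismut Ricci tensor $\mathrm{Rc}^B=\mathrm{Ric}_g-\tfrac14H^2-\tfrac12 d^*H$ into symmetric and skew parts, and relating $\rho^B$ to $\mathrm{Rc}^B(J\cdot,\cdot)$ via the pluriclosed condition $dH=0$) says that for pluriclosed metrics $\rho^B=0$ is equivalent to $\mathrm{Rc}^B+\nabla^B\theta=0$. The symmetric part of this is $\mathrm{Ric}-\tfrac14H^2+\tfrac12 L_{\theta^\sharp}g=0$, and the skew part is $d^*H+i_{\theta^\sharp}H=0$, using that $\theta=2d\phi$ is closed. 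I would quote this result rather than rederive it, noting only that it requires $dH=0$, which is exactly $\delta=0$.

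For \eqref{eq:conservedsolitons}, take the trace of the first equation to get $R-\tfrac14|H|^2+2\Delta\phi=0$ (with $\Delta=-d^*d$, or with signs adjusted to the convention). Then use the contracted Bianchi identity on the divergence of the first equation, together with the second equation, to obtain that $R-\tfrac1{12}|H|^2+4\Delta\phi-4|d\phi|^2$ is constant, as in the standard soliton argument. Combining this with the $\mathrm{SU}(3)$--holonomy of $\nabla^B$, which forces the constant to vanish, yields the stated identity up to normalisation of $|H|^2$. Alternatively, one can derive \eqref{eq:conservedsolitons} more directly by computing $\Delta\phi$ from $d^*\omega=-J\theta$ and the formula $|H|^2=|d^c\omega|^2$ for pluriclosed conformally balanced structures; I would try this direct route if the constant is hard to pin down.

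Finally, on a compact $M$, rewrite \eqref{eq:conservedsolitons} as $e^{2\phi}\,d^*(e^{-2\phi}d\phi)$-type divergence $=-\tfrac12|H|^2$. Integrating against $e^{-2\phi}$ then gives $\int|H|^2e^{-2\phi}=0$, so $H=0$. Hence $\Delta\phi+2|d\phi|^2=0$, and the maximum principle gives $\phi$ constant. Then $\omega$ is balanced and pluriclosed with $d^c\omega=0$, so it is Kähler, and $\rho^B=\rho^C=0$ shows it is Ricci-flat.
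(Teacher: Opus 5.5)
\textbf{There is a genuine gap in your derivation of \eqref{eq:conservedsolitons}.}

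Your derivation of the soliton system follows the paper's route. The paper also just combines $\rho^B(\omega)=0$ with \cite[Proposition 8.10]{GFSBook}, and your check of $\rho^B=0$ via $\rho^B=\rho^C-dJ\theta$ in the holomorphic frame $e^{-2\phi}\Omega$ is a reasonable way to fill in what the paper states as a remark.

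The Hamilton-type argument (divergence of the first equation, contracted Bianchi identity, second equation) only shows that $C:=|H|^2-2\Delta\phi-4|d\phi|^2$ is constant. The soliton system by itself cannot fix $C$: already for $H=0$, a non-flat steady gradient Ricci soliton such as the cigar has $R+|\nabla f|^2$ equal to a nonzero constant. Your sentence that the $\sunitary{3}$--holonomy of $\nabla^B$ ``forces the constant to vanish'' is therefore the whole content of the identity, and you give no mechanism for it.

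The paper gets $C=0$ from an additional input. IIB solutions solve the generalized Killing spinor equations and hence have vanishing generalized scalar curvature \cite[Proposition 2.23]{SGFLS}. Subtracting the trace of the first soliton equation then gives \eqref{eq:conservedsolitons} exactly. The gap also breaks your compact argument as written: with $C$ undetermined, integrating against $e^{-2\phi}\vol_g$ only yields $\int|H|^2e^{-2\phi}\vol_g=C\int e^{-2\phi}\vol_g$, from which nothing follows.

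Your fallback ``direct route'' does not close the gap as stated. $|H|^2=|d^c\omega|^2$ is just the definition. Applying $d^*$ to $d^*\omega=-J\theta$ is vacuous, since $d^*(Jd\phi)=0$ identically once $\theta=2d\phi$. Pluriclosedness has to enter through $dd^c\omega\wedge\omega=0$. Concretely:
\begin{enumerate}
\item Using $d\omega^2=\theta\wedge\omega^2$, write $d\omega=\tfrac12\theta\wedge\omega+\psi$ with $\psi$ primitive, so $\psi\wedge\omega=0=J\psi\wedge\omega$.
\item Then $d^c\omega\wedge\omega=\tfrac12J\theta\wedge\omega^2={*\theta}$. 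Hence $dd^c\omega=0$ gives $d^c\omega\wedge d\omega=-d{*\theta}=(d^*\theta)\vol_g$.
\item On the other hand, the cross terms vanish by primitivity, and $*\psi=-J\psi$ on primitive real $3$-forms. So $d^c\omega\wedge d\omega=\bigl(|\psi|^2-\tfrac12|\theta|^2\bigr)\vol_g$.
\item Since $|H|^2=|d\omega|^2=\tfrac12|\theta|^2+|\psi|^2$, you obtain the pointwise identity $|H|^2=d^*\theta+|\theta|^2$. This holds for any pluriclosed Hermitian metric in complex dimension $3$, with no holonomy input.
\item Substituting $\theta=2d\phi$ gives exactly \eqref{eq:conservedsolitons}.
\end{enumerate}
With either this computation or the Killing spinor citation in place, the rest of your compact-case argument goes through.
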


\begin{proof}
The first part of the statement is a direct consequence of the condition $\rho^B(\omega) = 0$ and \cite[Proposition 8.10]{GFSBook}. Equation \eqref{eq:conservedsolitons} follows from the fact that any solution of the IIB system \eqref{eq:IIB} is a solution of the Killing spinor equations in generalized geometry, and hence it has vanishing \emph{generalized scalar curvature} (see \cite[Proposition 2.23]{SGFLS}):
$$
\tu{S}_g  - \tfrac{1}{4} |H|^2 - 2 \Delta \phi - |d\phi|^2 = 0,
$$
where $\Delta$ denotes the Hodge Laplacian. The required identity follows now subtracting the trace of the first equation in \eqref{eq:soliton} (cf. \cite[Proposition 4.33]{GFSBook}). For $M$ compact, the last part of the statement follows from equation \eqref{eq:conservedsolitons} and integration by parts against $e^{-2\phi}\vol_g$.
\end{proof}

An alternative proof of the rigidity of compact solutions of the IIB system is via geometric invariant theory, and follows from \cite[Corollary 4.5]{GFJS}, combining the Bismut--Hermitian--Einstein equations with the existence of a global holomorphic volume form (cf. \cite[Proposition 8.32]{GFSBook}). The existence of non-compact solutions will be, indeed, the main focus of the present work. For further studies on the classification and existence of generalized Ricci solitons, we refer to \cite{GFSBook,StreetsUstinovskiy} and references therein.

\subsection{Cohomogeneity one ansatz and AC Calabi--Yau metrics}

In this paper we are interested in complete non-compact solutions of the IIB system \eqref{eq:IIB} that are invariant under a cohomogeneity one action of a compact Lie group $G$, \ie $G$ acts on $M$ preserving $(\omega,\Omega,\phi)$ and so that the orbit space $M/G$ is $1$-dimensional. In particular, in the next two sections we will reduce the PDE system \eqref{eq:IIB} to an ODE system, which we will carefully study in the subsequent sections of the present work. Our starting point was to make the \emph{a priori} assumption that the underlying smooth manifold admits a background cohomogeneity one asymptotically conical (AC) Calabi--Yau metric, which we expect will provide the asymptotics of our complete solutions at infinity. In fact, building on the main results in \cite{Izar,Izar:correction}, we will show in Remark \ref{def:cohoonebalanced} and Proposition \ref{prop:su3} below that
this assumption is very reasonable, and can be replaced by finiteness of the fundamental group. The present section is devoted to give some background material on cohomogeneity one AC Calabi--Yau metrics in six dimensions.

We start by recalling the structure of a cohomogeneity one manifold $M$, following \cite{Benard} (see also \cite{Izar} and references therein). Let $G$ be a compact Lie group acting on the left on a connected smooth manifold $M$, with an orbit of codimension one. We denote by $K \subset G$ the stabilizer of a point in this orbit. Besides the case $M=S^1\times G/K$, the manifold $M$ contains an open dense subset, the space $M^\ast$ of \emph{principal orbits}, $G$--equivariantly diffeomorphic to $(0,1) \times G/K$. If $M$ is non-compact, then either $M=M^\ast$ or the orbit space is diffeomorphic to $[0,1)$ and $M = M^\ast \sqcup G/K_0$, where the \emph{singular orbit} $G/K_0$ sits over the boundary point $t=0$ of the orbit space. As before, $K_0 \subset G$ denotes the stabilizer of a point in the singular orbit. If $M$ is compact, then the orbit space is diffeomorphic to $[0,1]$ and we have two singular obits $G/K_0$ and $G/K_1$ over the two endpoints. In the non-compact case $M$ has the topology of a disc bundle over the singular orbit $G/K_0$, while in the compact case $M$ is the union of two such disc bundles glued along their common boundary.

In view of finding non-K\"ahler cohomogeneity one solutions to \eqref{eq:IIB} with $\delta = 0$, the most natural choice is a non-compact manifold $M$ with a cohomogeneity one action of either $G=\sunitary{2}^2$, with principal orbit $\Sigma:=G/K=\sunitary{2}^2/\triangle\unitary{1}\simeq S^2\times S^3$, or $G=\sunitary{3}$, with principal orbit $\Sigma:=G/K=\sunitary{3}/\sunitary{2} \simeq S^5$, or finite quotients thereof (see Remark \ref{def:cohoonebalanced}). The requirement that $M$ is non-compact is necessary by Proposition \ref{prop:soliton}. The case $G=\sunitary{3}$ will be ruled out by our analysis in Proposition \ref{prop:su3}, and hence we shall focus on the first case. The existence of non-compact cohomogeneity one Calabi--Yau metrics in six dimensions with symmetry $G=\sunitary{2}^2$ is summarised in the next result. Here, by Calabi--Yau metric we mean a Riemannian metric on a $2n$-dimensional manifold with holonomy contained in $\sunitary{n}$. To give a more precise statement, consider the Calabi--Yau cone with underlying (singular) complex manifold the affine singularity 
$$
\tu{C}=\{ z_1^2+z_2^2+z_3^2+z_4^2 = 0\} \subset\C^4.
$$
The Calabi--Yau metric on $\tu{C}$ is a cone over the homogeneous Sasaki--Einstein structure on $\Sigma=\sunitary{2}^2/\triangle\unitary{1}$, to be recalled later (see \eqref{eq:Conifold}). Using terminology popularised in the physics literature \cite{Candelas:delaOssa}, we refer to this cone as the \emph{conifold}. A complete non-compact Calabi--Yau metric is called \emph{asymptotically conical} (AC) if, away from a compact set, the Calabi--Yau structure $(\omega,\Omega)$ decays to the conical one on (a finite quotient of) $\tu{C}$ with polynomial decay rate.

\begin{theorem}\label{thm:ACCY}
Let $(M^6,\omega,\Omega)$ be a non-compact $\sunitary{2}^2$--invariant AC Calabi--Yau manifold. Then, $M$ is asymptotic to the conifold $\tu{C}$ or its quotient $\tu{C}/\Z_2$ (and in particular it has principal orbits $\sunitary{2}^2/\triangle\unitary{1}$) and either:

\begin{enumerate}
\item $M= T^*S^3$, with singular orbit $\sunitary{2}^2/\triangle\sunitary{2}\simeq S^3$, complex structure given by the \emph{smoothing} of the conifold
$$
\{ z_1^2+z_2^2+z_3^2+z_4^2=t\},
$$ 
depending on the complex parameter $t\in\C^\ast$, and with $g$ an element in the family of Stenzel's AC Calabi--Yau metrics \cite{Candelas:delaOssa,Stenzel}. The phase of $t$ corresponds to the choice of phase of the holomorphic volume form $\Omega$ while the norm of the smoothing parameter $t$ is a scale parameter.

\item $M$ is the total space of the small resolution 
$$
\mathcal{O}(-1)\oplus\mathcal{O}(-1)\ra \C\PP^1,
$$
with singular orbit $\sunitary{2}^2/\unitary{1}\times \sunitary{2} \simeq S^2$ or $\sunitary{2}^2/\sunitary{2}\times\unitary{1} \simeq S^2$, and with metric an element in the 1-parameter family of Candelas--de la Ossa AC Calabi--Yau metrics \cite{Candelas:delaOssa}.

\item $M$ is the total space of $K_{\C\PP^1\times\C\PP^1}$, the canonical line bundle of $\C\PP^1\times\C\PP^1$, with singular orbit $\sunitary{2}^2/T^2 \simeq S^2 \times S^2$, and with metric an element in the 2-parameter family of AC Calabi--Yau metrics asymptotic to $\tu{C}/\Z_2$ \cite{Calabi:Ansatz,PandoZayas:Tseytlin}. The AC Calabi--Yau metrics are parametrised by the K\"ahler class $[\omega]\in H^2(M)$.
\end{enumerate}
\proof
By \cite[\S 2.2]{Sparks:SE} the only Calabi--Yau cones in complex dimension 3 with homogeneous Sasaki--Einstein cross-section are the conifold $\tu{C}$, $\C^3$ or finite quotients thereof. The classification of AC Calabi--Yau metrics by Conlon--Hein \cite{Conlon:Hein} in particular implies, see \cite[Table 1]{Conlon:Hein}, that all AC Calabi--Yau metrics asymptotic to $\tu{C}$ or $\tu{C}/\Z_2$ are $\sunitary{2}^2$--invariant and included in the list of the theorem.  
\endproof
\end{theorem}

\begin{remark}\label{rem:ACCY3}
In the second case, if we fix the isomorphism with the cone $\tu{C}$ at infinity, then there are really two small resolutions distinguished by the sign of $\langle \omega-\omega_{\tu{C}}, [S^2_\infty]\rangle$, where $\omega_{\tu{C}}$ is the conical K\"ahler form and the $[S^2_\infty]$ is the generator of $H^2(\tu{C};\Z)\simeq\Z$ (\cf Section \ref{sec:Parameters}). The map between the two small resolutions is called the flop and in this cohomogeneity one framework it is induced by the map that exchanges the two factors of $\sunitary{2}^2$. In the third case, if $[\omega]$ is compactly supported the metric is due to Calabi \cite{Calabi:Ansatz}, while the general case was first considered in \cite{PandoZayas:Tseytlin}.
\end{remark}

\begin{remark}\label{def:cohoonebalanced}
According to \cite[\S 3 and Remark 4.1]{Izar}, a \emph{simply connected} non-compact (not necessarily complete) $G$--invariant cohomogeneity one $6$-manifold with principal orbits $G/K$ which admits a $G$--invariant $\sunitary{3}$-structure $(\omega,\Omega)$, must satisfy either
\begin{enumerate}
\item $\operatorname{Lie} G = \operatorname{Lie} \sunitary{2}^2, \qquad \operatorname{Lie} K = \operatorname{Lie} \triangle\unitary{1}$, or
\item $\operatorname{Lie} G = \operatorname{Lie} \sunitary{3}, \qquad \operatorname{Lie} K = \operatorname{Lie} \sunitary{2}$.
\end{enumerate}
Furthermore, \cite[Theorem A]{Izar:correction} states that only in the first case there are invariant non-K\"ahler solutions of the equations $d\Omega= 0=d\omega^2$. In fact, the arguments in \cite[\S 4.2]{Izar} show that in case (ii) any invariant $\sunitary{3}$--structure $(\omega,\Omega)$ satisfying the (less restrictive) equations $d(e^{-2\phi}\Omega)=0=d(e^{-2\phi}\omega^2)$ for some positive function $e^\phi$ must also satisfy $d(e^{-\phi}\omega)=0$, \ie it is conformally K\"ahler. As we will see in Proposition \ref{prop:su3} below, imposing the further condition $dd^c\omega=0$ forces that $\phi$ be constant and therefore that $(\omega,\Omega)$ be a K\"ahler Ricci-flat Calabi--Yau structure. 
\end{remark}

\subsection{Integrability conditions and the Bianchi identity}\label{sec:ODEBI}

Let $M$ be a non-compact cohomogeneity one 6-dimensional manifold, as considered in Theorem \ref{thm:ACCY} (the case with symmetry $G=\sunitary{3}$ will be briefly discussed at the end of this section). Our first goal is to rewrite the IIB system \eqref{eq:IIB} on $M^\ast=\R\times\Sigma$ as an ODE system. In this section we will focus on the first and third equations in \eqref{eq:IIB}, corresponding (roughly) to the integrability of the complex structure and the Bianchi identity, and postpone the presentation of the complete ODE system to the next section. We will use the analysis of $\sunitary{2}^2$--invariant $\sunitary{3}$--structures on $M^\ast$ described in \cite[
\S 2]{Foscolo:Haskins}. 

We build on the fact that an invariant $\sunitary{3}$--structure on $M^*$ is equivalent to a 1-parameter family of invariant $\sunitary{2}$--structures on $\Sigma=\sunitary{2}^2/\triangle\unitary{1}$. In order to describe this family more explicitly, we introduce the $\sunitary{2}$--structure corresponding to the canonical homogeneous Sasaki--Einstein geometry of $\Sigma$. From the analysis of the isotropy representation of $\Sigma=\sunitary{2}^2/\triangle\unitary{1}$ one finds, see \cite[Lemma 2.10]{Foscolo:Haskins}, that there is a unique invariant 1-form $\eta^{se}$ up to scale on $\sunitary{2}^2/\triangle\unitary{1}$ and 4 invariant 2-forms $\omega_i^{se}$, $i=0,1,2,3$, satisfying
\begin{equation}\label{eq:Homogeneous:SE}
\begin{gathered}
\omega^{se}_i\wedge\omega_j^{se}=\eta_{ij}\, \omega_1^{se}\wedge\omega_1^{se},\\
d\eta^{se}=2\omega_1^{se}, \qquad d\omega_0^{se}=0=d\omega_1^{se},\\
d\omega^{se}_2=-3\eta^{se}\wedge\omega^{se}_3, \qquad d\omega^{se}_3=3\eta^{se}\wedge\omega^{se}_2,
\end{gathered}
\end{equation}
with $(\eta_{ij})=\tu{diag}(-1,1,1,1)$. In order to be completely explicit, fix bases $\eta_1,\eta_2,\eta_3$ and $\eta'_1,\eta'_2,\eta'_3$ of left-invariant $1$-forms on the two factors of $\sunitary{2}$ such that $d\eta_i=2\eta_j\wedge\eta_k$ and $d\eta'_i=2\eta_j'\wedge\eta'_k$ for $(ijk)$ a cyclic permutation of $(123)$. Then
\begin{equation}\label{eq:Homogeneous:SE:LeftInvariant}
\begin{gathered}
\eta^{se} = \tfrac{2}{3}(\eta_1-\eta'_1),\\
\omega_1^{se} = \tfrac{2}{3}(\eta_2\wedge\eta_3 - \eta'_2\wedge\eta'_3), \qquad \omega_0^{se} = \tfrac{2}{3}(\eta_2\wedge\eta_3 + \eta'_2\wedge\eta'_3),\\
\omega_2^{se} = -\tfrac{2}{3}(\eta_2\wedge\eta'_2 + \eta_3\wedge\eta'_3), \qquad \omega_3^{se} = \tfrac{2}{3}(\eta_2\wedge\eta'_3 + \eta'_2\wedge\eta_3).
\end{gathered}
\end{equation}
Note that $\omega_0^{se}$ and $\eta^{se}\wedge\omega^{se}_0$ are, respectively, a harmonic 2-form and 3-form on $(\Sigma,g^{se})$ representing in cohomology generators for the 1-dimensional cohomology $H^2(\Sigma;\R)$ and $H^3(\Sigma;\R)$. Here $g^{se}$ is the Sasaki--Einstein Riemannian metric defined by the $\sunitary{2}$--structure $(\eta^{se},\omega_1^{se},\omega_2^{se},\omega_3^{se})$.

\begin{remark}\label{rmk:Cohomology:generators}
More precisely, one can calculate that $3[\omega^{se}_0]$ and $\frac{9}{2}[\eta^{se}\wedge\omega_0^{se}]$ generate $H^2(\Sigma;2\pi\Z)$ and, respectively, $H^3(\Sigma;4\pi^2\Z)$.
\end{remark}

\begin{remark}\label{rem:conifoldexp}
The conditions in \eqref{eq:Homogeneous:SE} actually imply that
\begin{equation}\label{eq:Conifold}
\omega_\tu{C} = rdr\wedge\eta^{se}+r^2\omega^{se}_1, \qquad \Omega_\tu{C} = (dr +i r\eta^{se})\wedge r^2 (\omega_2^{se}+i\omega_3^{se})
\end{equation}
defines a conical Calabi--Yau structure, the conifold Calabi--Yau structure on the cone $\tu{C}=\tu{C}(\Sigma)$.
\end{remark}

Let $(\omega,\Omega)$ be a $\sunitary{2}^2$-invariant $\sunitary{3}$--structure on $M^*$. Then, $(\omega,\Omega)$ is equivalent to a 1-parameter family $(\eta,\omega_1,\omega_2,\omega_3)$ of invariant $\sunitary{2}$--structures on $\Sigma=\sunitary{2}^2/\triangle\unitary{1}$, as follows: if we choose a parameter $t$ on $\R$ corresponding to an arc-length parameter on the geodesic meeting all principal orbits orthogonally, we have
\[
\omega = dt\wedge\eta + \omega_1, \qquad \Omega = (dt+i\eta)\wedge(\omega_2+i\omega_3).
\]
Here $\eta$ is a nowhere vanishing 1-form on $\Sigma$, the $4$-dimensional distribution $\ker\eta$ is oriented and $(\omega_1,\omega_2,\omega_3)$ is a basis of $\Lambda^2_+\ker\eta^\ast$, consisting of non-degenerate $2$-forms satisfying $\omega_i\wedge\omega_j =\delta_{ij}\,\omega_1^2$. It follows (see \cite[Proposition 2.11]{Foscolo:Haskins}) that any $1$-parameter family of invariant $\sunitary{2}$--structures $(\eta,\omega_1,\omega_2,\omega_3)$ inducing the given orientation on $\Sigma$ is given by two positive functions $\lambda,\mu$ and an $\tu{SO}_0(1,3)$--valued function $A$ of the independent variable $t$ so that $\eta = \lambda \eta^{se}$ and the triple $\mu^{-1}(\omega_1,\omega_2,\omega_3)$ is given in terms of the basis $\omega_0^{se},\dots,\omega_3^{se}$ by the last three columns of $A$. Moreover, $\mu$ times the first column of $A$ defines an $\sunitary{2}^2$--invariant $2$-form $\omega_0$ on $\ker\eta$ that satisfies $\omega_0\wedge\omega_i=0$ for $i=1,2,3$ and $\omega_0^2=-\omega_1^2$.

With the basic geometric setup in place, we start by analyzing the first equation in the IIB system \eqref{eq:IIB}, which gives integrability of the complex structure and existence of a holomorphic volume form, namely:
\begin{equation}\label{eq:Coho1:IIB:dOmega}
d(e^{-2\phi}\Omega) = 0.
\end{equation}
In the present invariant setting, this equation becomes the evolution equations
\begin{subequations}
\begin{equation}\label{eq:Coho1:IIB:dOmega:Evolution}
\begin{gathered}
e^{2\phi}\partial_t \left( e^{-2\phi}\eta\wedge\omega_2\right) - d\omega_3 = 0= e^{2\phi}\partial_t \left( e^{-2\phi}\eta\wedge\omega_3\right) + d\omega_2,
\end{gathered}
\end{equation}
coupled to the constraints
\begin{equation}\label{eq:Coho1:IIB:dOmega:Static}
d(\eta\wedge\omega_2)=0=d(\eta\wedge\omega_3),
\end{equation}
\end{subequations}
where in the previous formulae $d$ denotes the exterior differential on $\Sigma$. Note that these equations are invariant under the discrete symmetry (this is the symmetry $\tau_1\circ\tau_2$ of \cite[Remark 2.2]{Foscolo:Haskins})
\begin{equation}\label{eq:Coho1:IIB:Discrete:Symmetry}
(t,\eta,\omega_1,\omega_2,\omega_3, e^\phi) \longmapsto (-t, \eta,-\omega_1,-\omega_2,\omega_3, e^\phi),
\end{equation}
which corresponds to a composition of the time reversal $t\mapsto -t$ with the antiholomorphic change $(\omega,\Omega)\mapsto (-\omega,-\overline{\Omega})$ 

\begin{lemma}\label{lem:Coho1:IIB:Fterm}
Up to the discrete symmetry \eqref{eq:Coho1:IIB:Discrete:Symmetry}, a constant change of phase of the complex volume form and the action of the normaliser of $\triangle\unitary{1}$, $\sunitary{2}^2$--invariant solutions to \eqref{eq:Coho1:IIB:dOmega} inducing the same orientation on the hypersurfaces $\{ t=\tu{const}\}$ as the homogeneous Sasaki--Einstein structure are of the form
\begin{equation}\label{eq:normalomega}
\omega_1 = u \omega_1^{se} + u_0 (\nu_3\omega_0^{se}+\nu_0\omega_3^{se}), \qquad \omega_2 = \mu \omega_2^{se}, \qquad \omega_3 = \mu (\nu_0\omega_0^{se}+\nu_3\omega_3^{se}),
\end{equation}
for functions $\lambda,\mu, \nu_0,\nu_3, u, u_0$ satisfying the constraints 
$$
\nu_3^2-\nu_0^2=1, \qquad u^2-u_0^2 = \mu^2, \qquad \lambda,\mu,\nu_3>0, \qquad \nu_0\leq 0.
$$
Furthermore, assuming \eqref{eq:normalomega}, integrability of the complex structure $J$ determined by $\Omega$ is equivalent to the ODE system
\begin{equation}\label{eq:integrableJ}
\lambda\,\partial_t\nu_0 +3\nu_0\nu_3=0, \qquad \lambda\,\partial_t\nu_3 + 3 \nu_3^2=3,
\end{equation}
and the equation \eqref{eq:Coho1:IIB:dOmega} is equivalent to
\[
\begin{gathered}\label{eq:dOmeganormal}
\partial_t (e^{-2\phi}\lambda\mu\nu_0)=0, \qquad e^{2\phi}\partial_t\left( e^{-2\phi}\lambda\mu \nu_3\right) - 3\mu =0=e^{2\phi}\partial_t\left( e^{-2\phi}\lambda\mu \right) - 3\mu\nu_3.
\end{gathered}
\]
\proof
The analysis is similar to \cite[Proposition 2.23]{Foscolo:Haskins}. The normalisation on the orientation of the level sets of $t$ fixes the inequality $\lambda>0$. We first consider the static constraints \eqref{eq:Coho1:IIB:dOmega:Static}: since $d(\eta^{se}\wedge \omega_i^{se})= 2\omega_1^{se}\wedge\omega_i^{se}$ for all $i$, the constraints are equivalent to $\omega_2$ and $\omega_3$ having no component along $\omega_1^{se}$. Secondly, we consider the projections of the two evolution equations in \eqref{eq:Coho1:IIB:dOmega:Evolution} that involve $\omega_2$ and $\omega_3$ along the $\eta^{se}\wedge\omega_0^{se}$--component: they imply that the phase of the $\omega_0^{se}$--component of $\omega_2+i\omega_3$ is constant. Therefore, by a constant change of phase of the complex volume form $\Omega$, we can assume that $\omega_2$ has no $\omega^{se}_0$--component and that the $\omega^{se}_0$--component of $\omega_3$ is non-positive. Finally, the action of the normaliser of $\triangle\unitary{1}$ in $\sunitary{2}^2$, corresponding to an action by equivariant diffeomorphisms on $\Sigma$, acts as a rotation in the $(\omega_2^{se},\omega_3^{se})$--plane. Up to this action we can therefore assume that at a point $t=t_0$ the form $\omega_2$ has also no $\omega_3^{se}$--component and the $\omega_3^{se}$--component of $\omega_3$ is positive. One then checks that the evolution equations that involve $\omega_2$ and $\omega_3$ preserve this choice of normalisation. Finally, the discrete symmetry \eqref{eq:Coho1:IIB:Discrete:Symmetry} allows one to assume that $\omega_2$ is a positive multiple of $\omega_2^{se}$. These facts imply the form of $\omega_2$ and $\omega_3$ in \eqref{eq:normalomega}, while the form of $\omega_1$ follows from the $\tu{SO}_0(1,3)$--relations.

As for the integrability of the complex structure assuming \eqref{eq:normalomega}, we work with the parameter $s$ defined by $\lambda\, ds = dt$ and check that
$$
\Omega' := \lambda^{-1} \mu^{-1}\Omega = (ds+i\eta^{se})\wedge \left(\omega_2^{se} + i(\nu_0\omega_0^{se}+\nu_3\omega_3^{se})\right)
$$
satisfies $d\Omega'=\gamma\wedge\Omega'$ for a complex $1$-form $\gamma$. Without loss of generality we can assume that $\gamma = f ds$ for a complex function $f = f(s)$, and then a direct calculation shows that $d\Omega'=\gamma\wedge\Omega'$ is equivalent to
$$
f = - 3 \nu_3, \qquad 3 - \partial_s \nu_3 = - \nu_3 f, \qquad \partial_s \nu_0 = f \nu_0, 
$$
hence implying \eqref{eq:integrableJ}. The final system of ODE in the statement, equivalent to $d(e^{-2\phi}\Omega)=0$, follows from direct computations.
\endproof
\end{lemma}

In the sequel, we will assume the normal form \eqref{eq:normalomega} for the $\sunitary{2}$--structure $(\eta,\omega_1,\omega_2,\omega_3)$. In the next result we record the action of the complex structure $J$ on the homogeneous $\sunitary{2}$--structure $(\eta^{se},\omega_1^{se},\omega_2^{se},\omega_3^{se})$ and $\omega_0^{se}$.

\begin{lemma}\label{lem:Jformula}
The action of the complex structure $J$ determined by $\Omega$ on $(\eta^{se},\omega_0^{se},\omega_1^{se},\omega_2^{se},\omega_3^{se})$ is given by
\begin{equation}\label{eq:Jexp}
\begin{gathered}
Jdt=\lambda\eta^{se}, \qquad J\eta^{se}=-\lambda^{-1}dt,\\
J\omega_1^{se}=\omega_1^{se}, \qquad J\omega_2^{se}=-\omega_2^{se},\\
J\omega_0^{se} = \left( (\nu_3^2 + \nu_0^2)\omega_0^{se} + 2\nu_0 \nu_3 \omega_3^{se}\right), \qquad J\omega_3^{se} = \left( -2\nu_0\nu_3\omega_0^{se} -(\nu_3^2 + \nu_0^2) \omega_3^{se}\right),
\end{gathered}
\end{equation}

\proof

With respect to the almost complex structure $J$ we have that $dt+i\lambda\eta^{se}$ is of type $(1,0)$, and hence the first line in \eqref{eq:Jexp} follows. As for the action of $J$ on $\omega_j^{se}$, we note that
$$
\omega := (\omega_0,\omega_1,\omega_2,\omega_3) = \mu \omega^{se} \cdot A
$$
where $\omega^{se} := (\omega_0^{se},\omega_1^{se},\omega_2^{se},\omega_3^{se})$ and $A \in \tu{SO}_0(1,3)$ is given by
$$
\mu A = \left( \begin{array}{cccc}
 u\nu_3 & u_0\nu_3 & 0 & \mu \nu_0 \\
 u_0 & u & 0 & 0 \\
 0 & 0 & \mu & 0 \\
 u\nu_0 & u_0\nu_0 & 0 & \mu \nu_3
\end{array}\right),
$$
The complex structure $J$ defines a transverse almost complex structure on $\Sigma$ such that $\omega_0$ and $\omega_1$ are of type $(1,1)$ and $\omega_2+i\omega_3$ is of type $(2,0)$. Hence
\[
J\omega = \omega P
\]
where $P$ is the diagonal matrix $(1,1,-1,-1)$. Consequently,
$$
J\omega^{se} = \mu^{-1} J\omega A^{-1} = \mu^{-1} \omega P A^{-1} = \omega^{se} A P A^{-1}.
$$
A direct calculation now shows that
$$
A P A^{-1} = \left( \begin{array}{cccc}
1 + 2 \nu_0^2 & 0 & 0 & - 2\nu_0\nu_3 \\
0 & 1 & 0 & 0 \\
 0 & 0 & -1 & 0 \\
2 \nu_0\nu_3 & 0 & 0 & 1 - 2 \nu_3^2
\end{array}\right)
$$
and the result follows from the identity $1 + \nu_0^2 = \nu_3^2$.
\endproof
\end{lemma}

We study next the Bianchi identity $dd^c\omega=0$, for the Hermitian $(1,1)$-form
\[
\omega = \lambda\, dt\wedge\eta^{se} + u\, \omega_1^{se} + u_0\, (\nu_3 \omega_0^{se}+\nu_0\omega_3^{se}).
\]
We calculate
\[
d\omega = (\partial_tu - 2\lambda)\, dt\wedge\omega_1^{se} + \partial_t (u_0 \nu_3)\, dt\wedge\omega_0^{se} + \partial_t(u_0 \nu_0)\, dt\wedge\omega_3^{se} + 3u_0 \nu_0\, \eta^{se}\wedge\omega_2^{se}, 
\]
and therefore, using Lemma \ref{lem:Jformula}, 
\begin{equation}\label{eq:dcomegaexp}
\begin{split}
d^c\omega & = \lambda (\partial_t u - 2\lambda)\, \eta^{se}\wedge\omega_1^{se} + \lambda \partial_t (u_0\nu_3)\,  \eta^{se}\wedge\left( (\nu_3^2 + \nu_0^2)\omega_0^{se} + 2\nu_0 \nu_3 \omega_3^{se}\right)\\ 
& - \lambda \partial_t(u_0\nu_0)\, \eta^{se}\wedge  \left( 2\nu_0\nu_3\omega_0^{se} +(\nu_3^2 + \nu_0^2) \omega_3^{se}\right) + 3\lambda^{-1} u_0\nu_0 \, dt\wedge\omega_2^{se}\\
& = \lambda (\partial_t u - 2\lambda)\, \eta^{se}\wedge\omega_1^{se} + \lambda \left( (\nu_3^2+\nu_0^2) \partial_t (u_0\nu_3) - 2\nu_0\nu_3 \partial_t (u_0\nu_0)\right) \eta^{se}\wedge \omega_0^{se}\\
& +\lambda \left( 2\nu_0\nu_3 \partial_t (u_0\nu_3) - (\nu_3^2+\nu_0^2) \partial_t (u_0\nu_0)\right) \eta^{se}\wedge \omega_3^{se} + 3\lambda^{-1}u_0\nu_0\, dt\wedge\omega_2^{se}.
\end{split}
\end{equation}

\begin{lemma}\label{lem:BI}
Assume that the almost complex structure $J$ is integrable, that is, \eqref{eq:integrableJ} is satisfied. Then, the Bianchi identity $dd^c\omega=0$ is equivalent to the existence of a constant $\alpha' \in \R$ such that 
\begin{subequations}
\begin{align}
\partial_tu - 2\lambda & = 0 \label{eq:Coho1:IIB:Bianchia}\\
\nu_3\lambda \partial_tu_0 + 3 u_0\nu_0^2 & = \tfrac{9}{2}\alpha'
\end{align}
\end{subequations}
Furthermore, provided that these two equations are satisfied, one has
\begin{equation}\label{eq:dcomegaBfield}
d^c \omega = \tfrac{9}{2}\alpha' \eta^{se}\wedge \omega_0^{se} - dB
\end{equation}
where 
\begin{equation}\label{eq:Bfield}
B = \tfrac{1}{3} \lambda \left( \nu_0 \partial_t u_0 - u_0 \partial_t \nu_0\right) \omega_2^{se}.
\end{equation}
Consequently, the cohomology class of the closed 3-form $H = - d^c\omega$ restricted to hypersurfaces $\{ t=\tu{const}\}$ is
\[
[H] = - \tfrac{9}{2}\alpha'
[\eta^{se}\wedge \omega_0^{se}] \in \alpha' \, H^3(\Sigma,4\pi^2\mathbb{Z}).
\]

\proof

Using the explicit formula for $d^c\omega$ in \eqref{eq:dcomegaexp}, it follows that 
\begin{align*}
dd^c\omega & = \left( \lambda\partial_tu - 2\lambda^2 \right) \omega_1^{se} \wedge \omega_1^{se}\\
& + \partial_t \left(\lambda \left( (\nu_3^2+\nu_0^2) \partial_t (u_0\nu_3) - 2\nu_0\nu_3 \partial_t (u_0\nu_0)\right)\right) dt \wedge \eta^{se}\wedge \omega_0^{se}
\\
& + \partial_t \left( \lambda\partial_tu - 2\lambda^2\right) dt \wedge \eta^{se}\wedge \omega_1^{se}
\\
& + \left(\partial_t \left( \lambda \left( 2\nu_0\nu_3 \partial_t (u_0\nu_3) - (\nu_3^2+\nu_0^2) \partial_t (u_0\nu_0)\right)\right) + 9 \lambda^{-1} u_0\nu_0 \right) dt \wedge \eta^{se}\wedge \omega_3^{se}
\end{align*}
and therefore $dd^c\omega=0$ is equivalent to
\begin{subequations}
\begin{align}
\partial_tu - 2\lambda & = 0,\label{eq:ddcomegared1}\\
\lambda\,\partial_t \left(\lambda \left( (\nu_3^2+\nu_0^2) \partial_t (u_0\nu_3) - 2\nu_0\nu_3 \partial_t (u_0\nu_0)\right)\right) & =0,\label{eq:ddcomegared2}\\
\lambda\,\partial_t \left( \lambda \left( 2\nu_0\nu_3 \partial_t (u_0\nu_3) - (\nu_3^2+\nu_0^2) \partial_t (u_0\nu_0)\right)\right) + 9 u_0\nu_0 & = 0. \label{eq:ddcomegared3}
\end{align}
\end{subequations}
We claim that the last equation follows from the second equation (they are in fact equivalent), provided that \eqref{eq:integrableJ} is satisfied. For this, taking the variable $\lambda ds = dt$, a direct calculation using \eqref{eq:integrableJ} and $1 + \nu_0^2 = \nu_3^2$, shows that
\begin{subequations}
\begin{align}
\lambda \left( (\nu_3^2+\nu_0^2) \partial_t (u_0\nu_3) - 2\nu_0\nu_3 \partial_t (u_0\nu_0)\right) & = \nu_3 \partial_s u_0 - u_0 \partial_s \nu_3, \\
\lambda \left( 2\nu_0\nu_3 \partial_t (u_0\nu_3) - (\nu_3^2+\nu_0^2) \partial_t (u_0\nu_0)\right) & = \nu_0 \partial_s u_0 - u_0 \partial_s \nu_0, \label{eq:BfieldODE}
\end{align}
\end{subequations}
which implies
\begin{align*}
\lambda\,\partial_t \left(\lambda \left( (\nu_3^2+\nu_0^2) \partial_t (u_0\nu_3) - 2\nu_0\nu_3 \partial_t (u_0\nu_0)\right)\right) & = \nu_3 \partial_s^2 u_0 - u_0 \partial_s^2 \nu_3, \\
\lambda\,\partial_t \left( \lambda \left( 2\nu_0\nu_3 \partial_t (u_0\nu_3) - (\nu_3^2+\nu_0^2) \partial_t (u_0\nu_0)\right)\right) & = \nu_0 \partial_s^2 u_0 - u_0 \partial_s^2 \nu_0.
\end{align*}
Hence, assuming $\nu_3 \partial_s^2 u = u_0 \partial_s^2 \nu_3$, we have
\begin{align*}
\nu_0 \partial_s^2 u_0 - u_0 \partial_s^2 \nu_0 & = u_0 \left(\nu_0 \nu_3^{-1} \partial_s^2 \nu_3 - \partial_s^2 \nu_0\right)\\
& = u_0 \nu_3^{-1}\left(3 \nu_0 \nu_3 \partial_s \nu_3  + 3 \nu_3^3 \partial_s \nu_0 - 6 \nu_0^2 \partial_s \nu_0 \right)\\
& = 3 u_0 \nu_3^{-1}  \left(\nu_3^2 - \nu_0^2 \right)\partial_s\nu_0\\
& = - 9 u_0\nu_0,
\end{align*}
where we have used that $1 + \nu_0^2 = \nu_3^2$ and hence $\nu_0 \partial_s \nu_0 = \nu_3 \partial_s \nu_3$. From the previous discussion, the Bianchi identity therefore reduces to the existence of a constant $\alpha' \in \R$ such that (see \eqref{eq:integrableJ})
$$
\tfrac{9}{2}\alpha' = \nu_3 \partial_s u_0 - u_0 \partial_s \nu_3 = \nu_3\lambda \partial_tu_0 - u_0 (3 - 3 \nu_3^2) = \nu_3\lambda \partial_tu_0 + 3 u_0\nu_0^2,
$$
which concludes the proof of the first part of the statement.

Finally, assuming now that $dd^c\omega = 0$, it follows from \eqref{eq:ddcomegared3} and \eqref{eq:BfieldODE} that
\begin{equation*}
dB = - \lambda \left( 2\nu_0\nu_3 \partial_t (u_0\nu_3) - (\nu_3^2+\nu_0^2) \partial_t (u_0\nu_0)\right) \eta^{se}\wedge \omega_3^{se} - 3\lambda^{-1}u_0\nu_0\, dt\wedge\omega_2^{se}.
\end{equation*}
The last part of the statement is a direct consequence of \eqref{eq:dcomegaexp} and Remark \ref{rmk:Cohomology:generators}.
\endproof
\end{lemma}

\begin{remark}\label{rem:Bfield}
It is interesting to observe that there is a natural closed three form
$$
H_0 = - \tfrac{9}{2}\alpha' \eta^{se}\wedge \omega_0^{se}, 
$$
and $B$-field \eqref{eq:Bfield}, which are forced upon us by the equation $dd^c\omega = 0$ and the cohomogeneity one ansatz. Note also that $H_0$ is furthermore co-closed with respect to the homogeneous Sasaki--Einstein structure on $\Sigma$, and by Lemma \ref{lem:Jformula}, $B$ is of type $(2,0) + (0,2)$ and singular along the singular orbit $S \simeq  S^3$ in the deformed conifold case. In the framework of generalized geometry, this situation corresponds to the \emph{holomorphic gauge}, in which there is a (twisted) holomorphic Courant algebroid determined by the closed complex $3$-form $H_0^{3,0 + 2,1} + i \partial \omega = 2i\partial \omega - dB^{2,0}$ (see \cite{GFJS,PymS}).
\end{remark}

We finish the section with a brief comment on the case with symmetry $G=\sunitary{3}$, where the existence of non-K\"ahler solutions with $\delta = 0$ is easily ruled out exploiting work by Alonso--Salvatore \cite{Izar}.

\begin{prop}\label{prop:su3}
Let $M^\ast$ be the set of principal orbits in a non-compact $\sunitary{3}$--invariant cohomogeneity one manifold with principal orbit $\Sigma=\sunitary{3}/\sunitary{2}\simeq S^5$ or a finite quotient thereof. Then the only (possibly incomplete) $\sunitary{3}$--invariant solutions $(\omega,\Omega)$ on $M^\ast$ of the \emph{IIB system} with vanishing source $\delta = 0$ have $\phi$ constant and therefore are K\"ahler Ricci-flat Calabi--Yau structures.
\proof
As explained in Remark \ref{def:cohoonebalanced}, the discussion in \cite[\S 4.2]{Izar} shows that any $\sunitary{3}$--invariant solution $(\omega,\Omega,e^\phi)$ to $d(e^{-2\phi}\Omega)=0=d(e^{-2\phi}\omega^2)$ on $M^\ast$ satisfies $d(e^{-\phi}\omega)=0$, \ie $d\omega = d\phi\wedge\omega$. Then the Bianchi identity $dd^c\omega=0$ is equivalent to $(dd^c\phi + d\phi\wedge d^c\phi)\wedge\omega=0$ and therefore $dd^c\phi = -d\phi\wedge d^c\phi$ (since wedge-product with $\omega$ is injective on $2$-forms). By $\sunitary{3}$--invariance $\phi=\phi(t)$ depends only on the independent variable $t$. As in the case of symmetry group $\sunitary{2}^2$, there is only one $\sunitary{3}$--invariant $1$-form $\eta^\tu{se}$ on $\Sigma$ (the isotropy representation of $K=\sunitary{2}$ is the sum $\R\oplus \C^2$ of the trivial and the standard representation of $\sunitary{2}$) with $d\eta^{\tu{se}}=2\omega_1^{\tu{se}}\neq 0$ ($\omega_1^{\tu{se}}$ is a multiple of the Fubini--Study metric on $\C\PP^2$). Hence $d^c t = \lambda \eta^{\tu{se}}$ for some nowhere vanishing function $\lambda=\lambda(t)$ and one calculates that $dd^c\omega=0$ if and only if
\[
\ddot{\phi} + \dot{\lambda}\dot{\phi} + \dot{\phi}^2 = 0 = 2\lambda\dot{\phi}.
\]
In particular, $\phi$ is constant and therefore $(\omega,\Omega)$ satisfies $d\omega=0=d\Omega$.
\endproof
\end{prop}

\subsection{ODE system and obstructions via singular sources}\label{sec:ODE}

We finish our analysis by presenting the ODE system that characterises $\sunitary{2}^2$--invariant solutions of the IIB system \eqref{eq:IIB}, in particular obtaining the remaining ODE for the conformally balanced equation $d(e^{-2\phi} \omega^2) = 0$. In order to give a complete statement, we first show that the ODEs defined by the integrability conditions and the Bianchi identity $dd^c\omega = 0$ obtained in the previous section can be solved explicitly. Using this, we will see that the vanishing of the constant $\alpha'$ in \eqref{eq:dcomegaBfield} implies that the solution is K\"ahler Ricci-flat. In the case of the crepant resolutions of $C$ and $C/\mathbb{Z}_2$ we show furthermore that $\alpha' \neq 0$ obstructs the existence of smooth solutions in a neighbourhood of the singular orbit

\begin{lemma}\label{lem:BIexplicit}
Let $(\omega,\Omega,\phi)$ be an $\sunitary{2}^2$--invariant solution of the equations
\begin{equation}\label{eq:IIBweak}
d(e^{-2\phi}\Omega)=0, \qquad dd^c\omega= 0,
\end{equation}
defined on (an open set in) $M^\ast=\R\times \Sigma$ and in normal form \eqref{eq:normalomega}. Then, 
\begin{equation}\label{eq:omeganormal}
 \omega = \tfrac{1}{2}d(u \eta^{se}) + \omega'.
\end{equation}
with $\omega' = u_0\, (\nu_3\omega_0^{se}+\nu_0\omega_3^{se})$, and there are two cases:

\begin{enumerate}
    \item If $\nu_0 = 0$, up to a change of variable the holomorphic volume $\Omega_0 = e^{-2\phi}\Omega$ coincides with the complex volume form of the conifold (see Remark \ref{rem:conifoldexp}). In this case, in the variable $\lambda\, ds = dt$ one has
\[
u_0 = \tfrac{9}{2}\alpha' s + b
\]
for some $b\in \R$ and (see \eqref{eq:dcomegaBfield})
\[
B=0, \qquad d^c \omega = \tfrac{9}{2}\alpha' \eta^{se}\wedge \omega_0^{se}.
\]

\item If $\nu_0< 0$ then $\Omega_0=e^{-2\phi}\Omega$ is the complex volume form of the Stenzel's Calabi--Yau structure on the smoothing of the conifold in Theorem \ref{thm:ACCY}. More precisely, in the variable $\lambda\, ds = dt$, one necessarily has 
\[
e^{-2\phi}\lambda\mu\nu_0 = -\kappa, \qquad e^{-2\phi}\lambda\mu = \kappa \sinh{3s}, \qquad e^{-2\phi}\lambda\mu\nu_3 = \kappa \cosh{3s},
\]
for some $\kappa>0$, which measures the period of $\Omega_0$ along the $0$-section $S^3 \cong \sunitary{2}^2/\triangle \sunitary{2}$. In this case, 
\[
u_0 = \tfrac{3}{2}\alpha' \frac{3s \cosh{(3s)} -\sinh{(3s)}+c\cosh{(3s)}}{\sinh{(3s)}},
\]
for some $c\in \R$, the $B$-field given in \eqref{eq:Bfield} corresponds to
$$
B = -\frac{3}{2}\alpha'\frac{3s + c}{\sinh{(3s)}}\omega_2^{se} 
$$
and $d^c\omega$ is given by \eqref{eq:dcomegaBfield}. Only when $c=0$ (and $\lambda$ and $u$ are an even and, respectively, an odd function of the arclength parameter $t$) $\omega$ extends smoothly over the singular orbit $\sunitary{2}^2/\triangle\sunitary{2}$ to define a $2$-form on $M=T^\ast S^3$. The $2$-form $B$ is singular along the singular orbit.
\end{enumerate}

Consequently, in either case, when $\alpha'  = 0$ the solutions are necessarily K\"ahler. 
    
\proof
We consider a new variable $s$ defined up to an additive constant by $\lambda\, ds = dt$. In this new variable the holomorphic complex volume form $\Omega_0 = e^{-2\phi}\Omega$ is given by
\[
\Omega_0 = (ds + i\eta^{se}) \wedge \left( (e^{-2\phi}\lambda\mu)\omega_2^{se} + i \left( (e^{-2\phi}\lambda\mu\nu_0)\omega_0^{se}+(e^{-2\phi}\lambda\mu\nu_3)\omega_3^{se}\right)\right).
\]
The coefficients $e^{-2\phi}\lambda\mu, e^{-2\phi}\lambda\mu\nu_0, e^{-2\phi}\lambda\mu\nu_3$ satisfy the equations described in Lemma \ref{lem:Coho1:IIB:Fterm}. These can be rewritten as
\[
\partial_s (e^{-2\phi}\lambda\mu\nu_0)=0, \qquad \partial^2_{ss}(e^{-2\phi}\lambda\mu)=9e^{-2\phi}\lambda\mu, \qquad 3e^{-2\phi}\lambda\mu\nu_3=\partial_s (e^{-2\phi}\lambda\mu).
\]
Taking into account the constraints $\lambda,\mu,\nu_3>0$ and $\nu_0\leq 0$ it is not difficult to solve these equations explicitly: either
\begin{enumerate}
\item $\nu_0=0$, $\nu_3=1$ and $e^{-2\phi}\lambda\mu=e^{3s}$, or
\item up to a translation $s\mapsto s+s_0$, 
\[
e^{-2\phi}\lambda\mu\nu_0 = -\kappa, \qquad e^{-2\phi}\lambda\mu = \kappa \sinh{3s}, \qquad e^{-2\phi}\lambda\mu\nu_3 = \kappa \cosh{3s},
\]
for some $\kappa>0$.
\end{enumerate}
In the first case, a further change of variable $r=e^s$ shows that
\[
\Omega_0 = e^{-2\phi}\lambda\mu (ds + i\eta^{se})\wedge (\omega_2^{se}+i\omega_3^{se}) = \Omega_\tu{C}
\]
is the holomorphic volume form of the conifold (see Remark \ref{rem:conifoldexp}). In the second case, it can be shown that $\Omega_0$ extends smoothly at $s=0$ over a singular orbit $S^3=\sunitary{2}^2/\triangle\sunitary{2}$ and defines the complex volume form of Stenzel's Calabi--Yau structure on the smoothing $M=T^\ast S^3$ of the conifold, see \cite[Theorem 2.27(ii)]{Foscolo:Haskins}. Note that the integral of $\Omega_0$ along the zero section is precisely $\tfrac{2}{9}\kappa$, by Remark \ref{rmk:Cohomology:generators}.

We move on to find an explicit expression for $u_0$. When $\nu_0=0$, the Bianchi identity reduces to
$$
\partial_s u_0 = \tfrac{9}{2}\alpha',
$$
and the first part of the statement follows from Lemma \ref{lem:BI}. When $\nu_0<0$, the Bianchi identity reduces to (see Lemma \ref{lem:BI})
\[
\cosh{(3s)}\sinh{(3s)}\, \partial_s u_0 + 3 u_0   = \tfrac{9}{2}\alpha'\sinh^2{(3s)}.
\]
The general solution is
\[
u_0 = \tfrac{3}{2}\alpha' \frac{3s \cosh{(3s)} -\sinh{(3s)} + c\cosh{(3s)}}{\sinh{(3s)}}
\]
for a constant of integration $c\in\R$. Asking that the $2$-form
\[
\omega = \lambda^2 ds\wedge\eta^{se} + u\, \omega_1^{se} + u_0\, (\nu_3 \omega_0^{se}+\nu_0\omega_3^{se}).
\]
is smooth on $T^\ast S^3$ forces $c=0$. This follows from \cite[Lemma 4.2]{Foscolo:Haskins}, which implies, in particular, that $\lambda$ is an even function and $u$, $u_0\nu_3$, and $u_0\nu_0$ are odd functions (see Remark \ref{rem:notationFH}), and hence $u_0$ must be an even function. Similarly, we have that 
$$
f = \lambda \left( \nu_0 \partial_t u_0 - u_0 \partial_t \nu_0\right) = -\frac{9}{2}\alpha'\frac{3s + c}{\sinh{3s}},
$$  
which is an even function but with $f(s) \to -\frac{9}{2}\alpha'$ as $s \to 0$. Hence $B = \tfrac{1}{3} f\omega_2^{se}$ (see \eqref{eq:Bfield}) does not extend to a smooth form on $M$ unless $\alpha' = 0$.

To finish, we note that $\alpha' = 0$ readily implies that $d^c\omega = 0$ when $\nu_0 = 0$, and $u_0 = 0$ when $\nu_0 \neq 0$. In the latter case, the expression \eqref{eq:dcomegaBfield} for $d^c\omega$ in terms of $\alpha'$ and $B$ and the expression \eqref{eq:Bfield} of $B$ in terms of $u_0$ yield $d^c\omega=0$ also in this case.
\endproof
\end{lemma}

\begin{remark}\label{rem:notationFH}
Notice that the triple $(\eta^{se},\omega^{se}_2,\omega_3^{se})$ in the notation of \cite{Foscolo:Haskins} corresponds to the triple $(-\eta^{se}$, $\omega^{se}_3, -\omega_2^{se})$ in the present notation. This is due to a different convention in the definition of conical Calabi--Yau structure in \cite[Definition 2.6]{Foscolo:Haskins}. Our $\omega^{se}_0,\omega_1^{se}$ agree with those in \cite{Foscolo:Haskins}.
\end{remark}

\begin{remark}\label{rem:omega'}
In the case $\nu_0 \neq 0$, the 2-form $\omega'$ in \eqref{eq:omeganormal} is the unique (up to scale) coclosed but not closed decaying harmonic $2$-form with respect to Stenzel's Calabi--Yau structure. In the case $\nu_0 =0$, $\omega'$ is a coclosed, but not closed, harmonic $2$-form with respect to the conical Calabi--Yau metric in the complex structure of the conifold. In this second case, the background $2$-form $\sigma$ in Theorem \ref{mthm:IIB:Resolved} (i) corresponds to $\sigma = \tfrac{9}{2}\alpha' s \omega_0^{se}$, in the variable $\lambda\, ds = dt$.

\end{remark}

In the case $\nu_0 = 0$, the holomorphic volume form of the conifold restricted to $M^\ast$ coincides with that of its small resolution and also, after passing to a $\Z_2$--quotient, with that of the crepant resolution $K_{\C\PP^1\times\C\PP^1}$ of $\tu{C}/\Z_2$. In the next result we prove that $\alpha' \neq 0$ obstructs the existence of smooth solutions of \eqref{eq:IIBweak} in a neighbourhood of the singular orbit $S$ in either the small resolution of $\tu{C}$ or the crepant resolution of $\tu{C}/\mathbb{Z}_2$.

\begin{lemma}\label{lem:No:Smoothness:Resolutions}
Let $M$ be a crepant resolution of $\tu{C}$ or $\tu{C}/\mathbb{Z}_2$, \ie $M$ is one of the two small resolutions $\mathcal{O}_{\C\PP^1}(-1)^{\oplus 2}$ of $\tu{C}$ or $M=K_{\C\PP^1\times\C\PP^1}$, and denote by $M^\ast = M\setminus S$, where $S\subset M$ is the singular orbit. Let $(\omega,\Omega,\phi)$ be a non-K\"ahler cohomogeneity one solution  of \eqref{eq:IIBweak} defined on some open subset of $M^*$. Then $(\omega,\Omega,\phi)$ can never extend smoothly over the singular orbit $S$. In fact, $H=-d^c\omega$ is a smooth closed 3-form on $M^\ast$ uniquely determined by its cohomology class, identified with $\alpha'\in\R\setminus\{0\}$, and $dH$ extends to the globally defined current on $M$ defined as follows.
\begin{enumerate}
\item If $M=\mathcal{O}_{\C\PP^1}(-1)^{\oplus 2}$ is the small resolution of $\tu{C}$ such that $\tfrac{3}{2}(\omega_1^\tu{se}\pm \omega_0^\tu{se})$ represents the integral class that evaluates to $2\pi$ on the singular orbit $S\simeq \C\PP^1$, then
\[
dH = \pm 4\pi^2\alpha' \delta_S,
\]
where $\delta_S$ is the current of integration along $S$.
\item If $M=K_{\C\PP^1\times\C\PP^1}$ is the crepant resolution of $\tu{C}/\mathbb{Z}_2$, then
\[
\langle dH,\tau\rangle = -\pi\alpha' \int_S{ \tau|_S\wedge\tfrac{3}{2}\omega_0^\tu{se}}.
\]
Here $S\simeq \C\PP^1\times\C\PP^1$ and $\frac{3}{2}\omega_0^\tu{se}$ is regarded as a smooth closed 2-form on $S$ representing the cohomology class of the difference of the Fubini--Study K\"ahler forms on the two factors normalised to have period $2\pi$.
\end{enumerate}
\proof
By Lemma \ref{lem:BIexplicit} we know that $d^c\omega =-H= \tfrac{9}{2}\alpha' \eta^\tu{se}\wedge\omega_0^\tu{se}$. If $\omega$ extends from an open subset of $M^\ast$ to an open subset of $M$ containing $S$ then $d^c\omega$ must vanish in cohomology (since in either case $U$ has no third cohomology) and therefore also the restriction of $d^c\omega$ to the principal orbit must vanish in cohomology, \ie $\alpha'=0$.

Conversely, the expression above shows that $H$ is a well defined smooth 3-form on $M^\ast$ and we now show that $dH$ is a well-defined current on $M$. Let $U$ be an open subset of $M$ containing $S$. Consider $\epsilon>0$ sufficiently small and let $U^\ast_\epsilon$ denote the subsets of $U$ of points at distance bigger than $\epsilon$ from the singular orbit $S$. Recall that $dH$ is well defined as a current on $M$ if the limit
\begin{equation}\label{eq:currentabs}
\langle dH,\tau\rangle := \lim_{\epsilon\ra 0}{\int_{U^\ast_\epsilon}H\wedge d\tau}
\end{equation}
exists for every $\tau$ is any smooth compactly supported $2$-form $\tau$ on $U$.

Now, denote by $\Sigma_\epsilon$ the boundary component corresponding to points at distance exactly $\epsilon$ from $S$. We identify $\Sigma_\epsilon$ with $\Sigma$ or $\Sigma/\Z_2$ in the two cases of the Lemma and endow it with the orientation induced by the Sasaki--Einstein metric $g^\tu{se}$. Then, using Stokes' Theorem (being careful about orientations) one has
\[
{\int_{U^\ast_\epsilon}H\wedge d\tau} = {\int_{\Sigma_\epsilon}H\wedge \tau} = -{\int_{\Sigma_\epsilon}\tfrac{9}{2}\alpha'\eta^\tu{se}\wedge\omega_0^\tu{se}\wedge \tau},
\]
for any smooth compactly supported $2$-form $\tau$ on $U$.

Now, $\Sigma_\epsilon$ is the total space of a fibration $\pi_S\co \Sigma_\epsilon\ra S$ over the singular orbit $S$ with fibre either a $3$-sphere or a circle depending whether we work in case (i) or (ii). To see this, we can use a Riemannian normal neighborhood of $S$, using the Riemannian normal bundle. Using \eqref{eq:Homogeneous:SE:LeftInvariant}, one can check that $\tfrac{9}{2} \eta^\tu{se}\wedge (\omega_1^\tu{se}\mp \omega_0^\tu{se})$ and $6\eta^\tu{se}$ are forms on $\Sigma_\epsilon$ that integrate to, respectively, $4\pi^2$ and $2\pi$ on the fibres of $\pi_S$ (in the first case, the sign depends on the choice of small resolution of the conifold, see Remark \ref{rem:ACCY3}). Finally, since $\tau$ is a smooth form on $U$, the restriction of $\tau$ to $\Sigma_\epsilon$ satisfies $\tau|_{\Sigma_\epsilon} = \pi_S^\ast(\tau|_S) + O(\epsilon)$ as $\epsilon\ra 0$. Therefore the limit for $\epsilon\ra 0$ in \eqref{eq:currentabs} exists for any $\tau$ defining the current $dH$, and is given as in the statement of the Lemma (rewriting $2\omega_0^\tu{se} = \mp (\omega_1^\tu{se}\mp \omega_0^\tu{se}) \pm (\omega_1^\tu{se}\pm \omega_0^\tu{se})$ in the case of one of the small resolutions of the conifold).
\endproof
\end{lemma}

We present next the main result of this section.

\begin{prop}\label{prop:Coho1:IIB}
Up to the discrete symmetry \eqref{eq:Coho1:IIB:Discrete:Symmetry}, a constant change of phase of the complex volume form, and the action of $\sunitary{2}^2$--equivariant diffeomorphisms, $\sunitary{2}^2$--invariant solutions of the IIB system \eqref{eq:IIB}, inducing the same orientation on the hypersurfaces $\{ t=\tu{const}\}$ as the homogeneous Sasaki--Einstein structure, are in one-to-one correspondence with solutions $(u,\phi)$ of the ODE system
\begin{equation}\label{eq:ODEfinal}
\partial_s u = \frac{2V^2 e^{4\phi}}{u^2-u_0^2}, \qquad \partial_s \phi = - \frac{u_0\, \partial_s u_0}{u^2-u_0^2},
\end{equation}
satisfying
\begin{equation}\label{eq:ODEpositive}
u > 0, \qquad \partial_s u > 0, \qquad u^2-u_0^2 >0,
\end{equation}
where, either:

\begin{enumerate}
\item $e^{-2\phi}\Omega$ is the complex volume form of the conifold and
\[
V = e^{3s}, \qquad u_0 = \tfrac{9}{2}\alpha' s + b
\]
for some constants $\alpha', b\in \R$;

\item $e^{-2\phi}\Omega$ is the complex volume form of the smoothing $T^\ast S^3$ of the conifold, and
\[
V = \kappa \sinh{3s}, \qquad u_0=\tfrac{3}{2}\alpha' \frac{3s \cosh{(3s)} -\sinh{(3s)}+ c\cosh{(3s)}}{\sinh{(3s)}}
\]
for some some constants $\alpha', c\in \R$ and $\kappa> 0$. Moreover, if $(\omega,\Omega, e^\phi)$ extends smoothly to $T^\ast S^3$ then necessarily $c=0$.
\end{enumerate}

In either case, solutions with $\alpha'=0$ are K\"ahler Calabi--Yau. Moreover, given a solution $(u,\phi)$ of \eqref{eq:ODEfinal} satisfying \eqref{eq:ODEpositive}, the associated $\tu{SU}(3)$--structure is of the form \eqref{eq:normalomega}, where the function $\mu$ is defined by $\mu^2=u^2-u_0^2$, the function $\lambda$ is defined by $\partial_s u = 2\lambda^2$ and the arc-length parameter $t$ is implicitly defined by $\lambda\, ds = dt$. 
\proof
Work in the parameter $s$ defined by $\lambda\, ds=dt$. Thanks to Lemma \ref{lem:BIexplicit}, the description of the triple $(\omega,\Omega,e^\phi)$ in Lemma \ref{lem:Coho1:IIB:Fterm} depends on the explicitly known functions $V=e^{-2\phi}\lambda\mu$, $u_0$, $\nu_0, \nu_3$ and unknown functions $u,\phi,\lambda,\mu$. In fact, given $V$ and $u_0$, $u$ and $\phi$ alone determine $\lambda$ and $\mu$ via $\mu^2=u^2-u_0^2$ and $\lambda^2\mu^2 = V^2 e^{4\phi}$.

Now, imposing $dd^c\omega=0$ yields the additional constraint $\partial_s u=2\lambda^2$ of \eqref{eq:Coho1:IIB:Bianchia}. One further computes that $d(e^{-2\phi}\omega^2)=0$ if and only if
\begin{equation}\label{eq:confbalanceds}
\partial_s \left( e^{-2\phi}(u^2-u_0^2)\right) -4e^{-2\phi}\lambda^2 u=0.
\end{equation}
The system given by the two equations \eqref{eq:confbalanceds} and $\partial_s u=2\lambda^2$ is equivalent to \eqref{eq:ODEfinal} once we substitute the expressions for $\lambda, \mu$ in terms of $V,u_0, u,\phi$.

Finally, if $\alpha'=0$ then $\partial_su_0=0$ so $\phi$ is constant and $(\omega,\Omega)$ a K\"ahler Calabi--Yau structure.
\endproof
\end{prop}

\begin{remark}\label{rmk:Coho1:IIB}
The ODE system in the statement of the theorem is equivalent to the single second-order ODE for $u$
\[
\frac{\partial^2_{ss}u}{\partial_s u} = \partial_s \log{\left( V^2\right)} - \frac{\partial_s (u^2+u_0^2)}{u^2-u_0^2}
\]
with known given coefficients $V$ and $u_0$.
\end{remark}

\begin{remark}\label{rmk:Coho1:IIB:Infinity}
Observe that in case (ii) for $s\gg 1$ we have
\[
V \approx \tfrac{\kappa}{2}e^{3s} = e^{3\left( s+\tfrac{1}{3}\log{\tfrac{\kappa}{2}}\right)}, \qquad u_0\approx \tfrac{9}{2} \alpha' s  - \tfrac{3}{2} \alpha' = \tfrac{9}{2} \alpha' \left( s+\tfrac{1}{3}\log{\tfrac{\kappa}{2}}\right)  - \tfrac{3}{2} \alpha' \left( 1+\log{\tfrac{\kappa}{2}}\right)
\]
up to exponentially decaying terms. Hence, up to a translation $s\mapsto s+\tfrac{1}{3}\log{\frac{\kappa}{2}}$, the ODE system of Proposition \ref{prop:Coho1:IIB} in case (ii) has coefficients exponentially close to the one in case (i) with $b=-\tfrac{3}{2}\alpha'\left( 1+\log{\frac{\kappa}{2}}\right)$.
\end{remark}

\subsection{Parameters and rescalings}\label{sec:Parameters}
We conclude this section by discussing the parameters involved in Proposition \ref{prop:Coho1:IIB} and natural symmetries of the ODE system. First of all, note that a consequence of our analysis is that, for any solution, $H=-d^c\omega$ is uniquely determined by $\alpha'$, which determines the cohomology class $[H]\in H^3(\Sigma)$. We have already observed that $\alpha'=0$ implies that $(\omega,\Omega)$ is a K\"ahler Calabi--Yau structure. We will therefore focus on the non-K\"ahler case $\alpha'\neq 0$. Note that the ODE system of Proposition \ref{prop:Coho1:IIB} is invariant under $(u,e^\phi,u_0)\mapsto (u,e^\phi,-u_0)$. Hence without loss of generality we can assume that $\alpha'>0$ so that $u_0>0$ for $s\gg 1$.

In case (i) of Proposition \ref{prop:Coho1:IIB}, \ie in the complex structure of the conifold, the parameter $b$ is the choice of a cohomology class in $H^2(\Sigma)$ associated to the choice of the primitive $\omega$ for $JH$. This parameter does not appear in case (ii) when we make the further assumption that solutions extend smoothly from $M^\ast$ to $T^\ast S^3$: the latter has no second cohomology and therefore there is no cohomological freedom in the choice of primitive for $JH$. (If we do not assume smoothness, then the additional choice corresponds to the choice of integration constant $c$ in the expression for $u_0$).

The parameter $\kappa$ in case (ii) of Proposition \ref{prop:Coho1:IIB}, \ie in the complex structure of the smoothing of the conifold, is the cohomology class $[e^{-2\phi}\Omega]\in H^3(\Sigma;\C)$: a priori it is a complex number, but it can always be assumed to lie in $(0,\infty)$ by changing the phase of the complex volume form, as we have done. In case (i) this cohomological invariant vanishes.

There are natural scaling freedoms in the equation arising from scaling the $\sunitary{3}$--structure $(\omega,\Omega)$ by $(\Lambda^2\omega, \Lambda^3 \Omega)$ for some $\Lambda>0$ and independently translating the dilaton $\phi$ by a constant $\psi\in\R$. Under these symmetries, if $(u,\phi)$ is a solution of the system of Proposition \ref{prop:Coho1:IIB} with parameters $(\alpha',\kappa,b)$ ($\kappa=0$ in case (i) and $b=0$ in case (ii) when we assume smoothness on $T^\ast S^3$), then:
\begin{enumerate}
\item if we work in the complex structure of the conifold, then up to a translation $s\mapsto s-\log{\Lambda}+\tfrac{2}{3}\psi$, $(\Lambda^2 u,\phi+\psi)$ is a solution with parameters
\begin{subequations}\label{eq:Coho1:IIB:Scaling}
\begin{equation}
\left( \Lambda^2\alpha',0,\Lambda^2 b-\tfrac{9}{2}\Lambda^2\alpha'\left( \log{\Lambda}-\tfrac{2}{3}\psi \right)\right);
\end{equation}
\item if we work in the complex structure of the smoothing of the conifold, then $(\Lambda^2 u,\phi+\psi)$ is a solution with parameters
\begin{equation}
\left( \Lambda^2\alpha',\Lambda^3e^{-2\psi}\kappa,0\right).
\end{equation}
\end{subequations}
\end{enumerate}
In each case, we can choose $\Lambda$ and $\psi$ to normalise the choice of $(\alpha',b)$ and, respectively, $(\alpha',\kappa)$. This becomes relevant when studying limits of families of solutions of the IIB system.

\section{Explicit solutions and singular sources}

We have seen in Lemma \ref{lem:No:Smoothness:Resolutions} that on the crepant resolutions of the conifold or its $\Z_2$--quotient there cannot be any smooth non-K\"ahler $\sunitary{2}^2$--invariant solution of the IIB system. The purpose of this section is to discuss some explicit solutions of the IIB system \eqref{eq:IIB} that will serve as models for natural singular behaviour for the ODE system of Proposition \ref{prop:Coho1:IIB} when no smooth solutions are allowed. We will also introduce a family of complete solutions on the deformed conifold, called \emph{CV--MN solutions}, which will correspond to the case $p=p_*$ and $q=q_\ast$ in Theorems \ref{mthm:IIB:Deformed} and \ref{mthm:IIB:Resolved}.

\subsection{$5$-brane sources and the CV--MN models}

In physics jargon, the solutions we discuss in this section have $5$-brane sources: let $S$ be a (possibly disconnected) smooth holomorphic curve in the complex manifold $(M,J)$; we look for solutions of the system on the complement $M\setminus S$ of  $S$ and which satisfy the global equation
\[
dd^c\omega = -4\pi^2\alpha'\,
\sum_{i=1}^n{k_i\,\delta_{S_i}},
\]
where $S_1,\dots, S_n$ are the connected components of $S$, $k_i\in\R$ are fixed weights and $\delta_{S_i}$ denotes the current of integration along $S_i$. The first part of Lemma \ref{lem:No:Smoothness:Resolutions} explains why this natural physical set-up is relevant for our analysis.

We will first discuss the ``local model'' case where $M=\C^3=\C\times \C^2$ and $S=\bigsqcup_{i=1}^n\C\times\{ p_i\}$ for distinct points $p_1,\dots, p_n\in\C^2$. In the simplest situation ($n=1$ and a natural constant of integration is set to $0$), the solution of the IIB system on the complement of $S$ is given by the isometric product of the flat metric on $\C$ and the Boothby metric on $\C^2\setminus\{ 0\}$ (that is, the cylindrical metric on $\C^2\setminus\{ 0\} \cong \R \times S^3$). This solution is well known both in the physics literature, \cf the ``neutral solution'' \cite{Callan:Harvey:Strominger}, and in the mathematics literature, where it appears more commonly after passing to a compact quotient as a canonical non-K\"ahler metric on the Hopf surface. Superpositions of neutral solutions centred at different points in $\C^2$ were considered in \cite[Section 5.1]{VAHS}, in applications of the IIB system to vertex algebras. The construction can be considered as a IIB analogue of the Gibbons--Hawking Ansatz for 4-dimensional hyperk\"ahler metrics with a triholomorphic circle action, which give rise to models for solutions of the IIA system with 6-brane sources.

We will then attempt to write a ``global solution'' of the IIB system as a fibration over $\C\PP^1$ with fibre the neutral solution using the Chern connection of $\mathcal{O}_{\C\PP^1}(-1)$ (the resulting complex manifold is then naturally identified with the complement of the 0-section in the small resolution $\mathcal{O}_{\C\PP^1}(-1)\oplus\mathcal{O}_{\C\PP^1}{O}(-1)$ of the conifold). This attempt will lead us naturally to recover an explicit singular solution of the ODE system of Proposition \ref{prop:Coho1:IIB} in the complex structure of the conifold that was first discussed in the physics literature by Maldacena--Nu\~nez \cite{Maldacena:Nunez}. The non-vanishing curvature of the Chern connection on $\mathcal{O}_{\C\PP^1}(-1)$ has interesting effects on this geometry, which has a complete end and an incomplete singularity of an exotic kind. Explicit complete metrics on $T^\ast S^3$ with the same asymptotics at infinity were discussed by Maldacena--Nu\~nez \cite{Maldacena:Nunez} building on work by Chamseddine--Volkov \cite{Chamseddine:Volkov}. We will refer to these explicit solutions of the cohomogeneity one IIB system of Proposition \ref{prop:Coho1:IIB} as the \emph{CV--MN solutions}, either on the resolved or the deformed conifold. The solution is singular in the former case and smooth in the latter. There is in fact a family of CV--MN solutions, depending on parameters $\alpha'$ and $b$ or $\kappa$ according to the two cases, but the rescalings of \eqref{eq:Coho1:IIB:Scaling} can be used to fix these parameters.

Later Maldacena--Martelli \cite{Maldacena:Martelli}, based on numerical solutions and heuristic matched asymptotics expansions, speculated on the existence of a 1-parameter family of solutions on $T^\ast S^3$ that are asymptotically conical, with the CV--MN solution appearing as a limit. The purpose of the rest of the paper is to establish the existence of this family rigorously (see Theorem \ref{mthm:IIB:Deformed} in the Introduction), as well as the existence of an analogous family of solutions on exterior domains in the conifold with prescribed incomplete behaviour at the interior boundary and asymptotically conical behaviour at infinity (our Theorem \ref{mthm:IIB:Resolved} in the Introduction).

\subsection{The Callan--Harvey--Strominger ansatz}

Let $h$ be a positive harmonic function on an open set $U$ of $\C^2$ and consider the $\sunitary{3}$--structure $(\omega,\Omega)$ and function $e^{\phi}$ on $\C\times U$ defined by
\begin{equation}\label{eq:IIB:GH}
\omega=\omega_\C +h\,\omega_{\C^2}, \qquad \Omega= h\,\Omega_{\C^3}, \qquad e^\phi = \sqrt{h}.
\end{equation}
Here $\omega_{\C^m}$ and $\Omega_{\C^m}$ denote, respectively, the standard K\"ahler form and holomorphic volume form on $\C^m$. It is immediate to check that $d(e^{-2\phi}\Omega)=0=d(e^{-2\phi}\omega^2)$: the first equation is obvious, while the second one follows from the fact that $e^{2\phi}=h$ only depends on the coordinates on $U$, so that $h\omega_{\C^2}^2$ is trivially closed. On the other hand,
\[
dd^c\omega = dJ(dh\wedge\omega_{\C^2})= (dJdh)\wedge\omega_{\C^2}= -(\triangle h) \tu{vol}_{\C^2}=0.
\]
Hence the triple $(\omega,\Omega,e^{2\phi})$ determined by $h$ in \eqref{eq:IIB:GH} provides a solution of the IIB system. This is analogous to the Gibbons--Hawking Ansatz for 4-dimensional hyperk\"ahler metrics with a circle symmetry, which can be regarded as solutions of the IIA system on an open set $\R^3\times U\subset \R^3 \times \R^3\simeq\C^3$. In the related context of the heterotic system, an ansatz equivalent to \eqref{eq:IIB:GH} has been considered by Strominger \cite{Strominger} and Callan--Harvey--Strominger \cite{Callan:Harvey:Strominger}, so we will refer to \eqref{eq:IIB:GH} as the \emph{Callan--Harvey--Strominger ansatz} and to its complete solutions below as the \emph{CHS solutions}.  

Now, positive harmonic functions on the complement of a finite set in $\C^2$ must necessarily be a sum of a constant and multiples of Green's functions on $\C^2$ with singularities at a collection of points $p_i$ in $\C^2$. We consider here a finite number of points, though in principle one could also consider infinitely many of them. We consider then harmonic functions of the form  
\begin{equation}\label{eq:IIB:GH:harmonic}
h=c+\alpha'\sum_{i=1}^n{ \frac{k_i}{|z-p_i|^2}}
\end{equation}
for constants $c,\alpha'\geq 0$, distinct points $p_1, \dots, p_n$ in $\C^2$ and weights $k_1,\dots,k_n>0$. Then $h$ is harmonic on $U=\C^2\setminus\{ p_1,\dots,p_n\}$ while as distributions on $\C^2$
\[
\triangle h = 4\pi^2\alpha'\sum_{i=1}^n{k_i \delta_{p_i}},
\]
where $\delta_{p_i}$ is the Dirac delta in $p_i$. The resulting solution $(\omega,\Omega,e^{2\phi})$ of the IIB system is therefore defined on $\C\times (\C^2\setminus \{ p_1,\dots,p_n\})\subset\C^3$ and as currents on $\C^3$ it satisfies
\[
dd^c\omega = -4\pi^2\alpha' \sum_{i=1}^n{k_i\delta_{S_i}},
\]
where $\delta_{S_i}$ is the current of integration on the holomorphic curve $S_i=\C\times \{ p_i\}$ in $\C^3$. Solutions with $\alpha'=0$ correspond to the flat K\"ahler structure on $\C^3$, so in the following we will assume without further notice that $\alpha'>0$.

The simplest non-K\"ahler solution corresponds to the choice $c=0$, $n=1$. Then $h=\alpha' r^{-2}$ (for $k_1=1$, say) where $r$ is a radial coordinate on $\C^2$ centred at $p_1$ and the resulting metric is isometric to the product of the Euclidean metric on $\C$ with the Boothby (\ie cylinder) metric $g_{cyl}=dt^2 + \alpha' g_{\Sph^3}$ on $\C^2\setminus\{ 0\} \cong \R \times S^3$. Here $t=\sqrt{\alpha'}\log{r}$ and $g_{\Sph^3}$ is the round metric on the unit sphere $\Sph^3$ in $\C^2$. This solution is Bismut flat, \ie the Bismut connection $\nabla^\tu{B}=\nabla^{\tu{LC}}-\frac{1}{2}g^{-1}d^c\omega$ has vanishing curvature.

\begin{remark*}
The Boothby metric appears more commonly in the literature as a natural example of (Bismut flat) non-K\"ahler metric on the Hopf surface $(\C^2\setminus\{0\})/\Z\simeq S^1\times S^3$.
\end{remark*}

Now, since for $|z-p_i|\ra 0$ we have
\[
h\approx \alpha' k_i \frac{1}{|z-p_i|^2},
\]
by \eqref{eq:IIB:GH:harmonic}, and hence the solution $(\omega,\Omega, e^{2\phi})$ corresponding to the generic choice of parameters in \eqref{eq:IIB:GH:harmonic} has a cylindrical end near $S_i$ with asymptotic cylinder of the form
\[
g_\C + (dt^2 +  \alpha' k_i g_{\Sph^3}),
\]
for $t = \sqrt{\alpha'k_i}\log |z-p_i|$, \ie the cross-section $S^3$ of the cylinder has radius $\sqrt{\alpha' k_i}$. As $|z|\ra \infty$ the asymptotic behaviour of the solution depends on the parameter $c$. If $c=0$ we also have a cylindrical end at infinity with cross-section $S^3$ of the cylinder of radius $\sqrt{\alpha'(k_1+\dots+k_n)}$. If $c>0$ the metric has instead a Euclidean end.

\subsubsection{Bismut holonomy of the CHS solutions}

In this section we calculate the holonomy of the Bismut connection for the solutions in the CHS ansatz. In order to state the result we keep the notation just introduced. 

\begin{prop}\label{propo:chs:hol}
Let $(\omega,\Omega,e^\phi)$ be a CHS solution \eqref{eq:IIB:GH} with $h$ as in
\eqref{eq:IIB:GH:harmonic}, on $M=\C\times U$ with
$U=\C^2\setminus\{p_1,\dots,p_n\}$.  Then the holonomy of the Bismut
connection of $(\omega,\Omega,e^\phi)$ is
\[
\operatorname{Hol}(\nabla^B)=
\begin{cases}
\{1\} & \text{if } \alpha' = 0 \text{ (flat solution)} \text{ or } c=0  \text{ and } n=1 \quad \text{(Boothby solution)},\\[2pt]
\sunitary{2} & \text{otherwise},
\end{cases}
\]
where $\sunitary{2}\subset\sunitary{3}$ acts trivially on the flat $\C$ factor.
In particular, every CHS solution other than the flat metric and the Boothby metric is not Bismut-flat and has full holonomy $\sunitary{2}$. 
\end{prop}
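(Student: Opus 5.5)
The plan is to reduce to a four-dimensional computation and then recognise the Bismut connection as a 't Hooft-type $\sunitary{2}$ instanton on $U$.

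\textbf{Reduction to $U$.} Since $\omega=\omega_\C+h\,\omega_{\C^2}$ with $h$ a function on $U$ only, the metric is the Riemannian product of the flat metric on $\C$ and $g_h:=h\,g_{\C^2}$ on $U$. Moreover $H=-d^c\omega=-J(dh\wedge\omega_{\C^2})$ is a $3$-form pulled back from $U$. It equals, up to a sign fixed by orientation conventions, $*_{\C^2}dh$, i.e.\ $*_{g_h}d\log h$. Hence $\nabla^B$ splits as the flat connection on $T\C$ plus the Bismut connection $\nabla^{B,4}$ of the Hermitian surface $(U,g_h,J)$. In particular $\Hol(\nabla^B)=\Hol(\nabla^{B,4})$, acting trivially on the $\C$ factor.

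\textbf{Holonomy is at most $\sunitary{2}$.} Since $\nabla^{B,4}$ preserves $\omega_U=h\,\omega_{\C^2}$ and, by the conformally balanced condition (which holds here, cf.\ Proposition \ref{prop:soliton}), also the $(2,0)$-form $h\,\Omega_{\C^2}$, its holonomy lies in $\sunitary{2}$. In four dimensions this is one of the two $\mathfrak{sp}(1)$ factors of $\mathfrak{so}(4)$.

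\textbf{Connection and curvature.} I would use the orthonormal coframe $e^a=h^{1/2}dx^a$. The Levi-Civita connection forms of the conformally flat metric are $\tfrac12(\partial_b\log h\,e^a-\partial_a\log h\,e^b)h^{-1/2}$, and the torsion contribution $-\tfrac12 H$ cancels exactly the components in the other $\mathfrak{sp}(1)$ factor. The result should be the 't Hooft form
\[
A^B=\bar\eta^{k}_{ab}\,\partial_b\log h\,dx^a\,\otimes \tau_k,
\]
with $\bar\eta$ the 't Hooft symbols of the chirality singled out by $J$. Its curvature is a standard computation:
\[
F=\bar\eta^k_{ab}\big(\partial_c\partial_b\log h+\partial_c\log h\,\partial_b\log h\big)\,dx^c\wedge dx^a\otimes\tau_k-(\ldots)\,|d\log h|^2 .
\]
This curvature vanishes identically if and only if $\partial_a\partial_b(h^{-1})$ is proportional to $\delta_{ab}$ and $h\,\Delta(h^{-1})$ takes a specific value, namely $h^{-1}$ is a quadratic function $a|x|^2+\langle v,x\rangle+d$ (using the trace identity together with harmonicity of $h$). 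Comparing with \eqref{eq:IIB:GH:harmonic}, I would conclude that the only harmonic $h$ of the form $h=q^{-1}$ with $q$ quadratic are the constants ($\alpha'=0$) and $\alpha'k/|x-p|^2$ ($c=0$, $n=1$). This gives the Bismut-flat cases.

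\textbf{Full holonomy in the remaining cases.} By Ambrose--Singer and real-analyticity, the holonomy algebra is spanned by $F$ and its covariant derivatives at a point. The only proper nontrivial subalgebra of $\mathfrak{su}(2)$ is $\mathfrak{u}(1)$, so it suffices to rule out that all curvature values on the connected set $U$ lie in a single parallel line. I expect this to be the main obstacle. I would handle it by expanding $h$ near a puncture $p_1$ as
\[
h=\frac{\alpha'k_1}{|x|^2}+\beta+\langle \gamma,x\rangle+O(|x|^2),\qquad \beta=c+\alpha'\sum_{i\geq 2}\frac{k_i}{|p_1-p_i|^2}>0 .
\]
The leading term is flat, so $F$ is governed by the first correction $\beta|x|^2/(\alpha'k_1)$ in $h^{-1}$. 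For this radially symmetric perturbation, $F$ at points $x$ in different directions takes values in $\mathfrak{su}(2)$ rotating with the Hopf action; after parallel transport into a common gauge (essentially trivial to leading order, since the leading connection is pure gauge), these values span all of $\mathfrak{su}(2)$. Since $\beta>0$ whenever we are not in the Boothby case with $n=1,c=0$, this forces $\Hol=\sunitary{2}$. If this expansion turns out to be awkward, the fallback is to compute $F$ explicitly at three well-chosen points near $p_1$ and check linear independence directly.
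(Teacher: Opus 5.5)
Your reduction to $(U,g_h)$, the $\sunitary{2}$ bound, and the flatness criterion match the paper's route. In the paper the curvature is governed by the trace-free part $Q_0$ of $h\,\Hess_\delta(h^{-1})$, so it is flat iff $h^{-1}$ is quadratic, i.e.\ $h$ is constant or $a|x-p|^{-2}$.

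\textbf{The gap is the final step.} You flag it as the main obstacle but do not close it. Your picture, in which curvature values ``at points in different directions'' rotate with the Hopf action and only jointly span $\mathfrak{su}(2)$ after parallel transport, suggests you expect the curvature at each point to span less than $\mathfrak{su}(2)$. That is not the case. The mechanism you propose is also not justified as written:
\begin{itemize}
\item That parallel transport is ``essentially trivial to leading order'' is asserted, not shown. In the natural frame $h^{-1/2}\partial_a$ the leading connection form $[X^\flat\wedge d\log h]_-$ has size of order $\rho^{-1}$, so transport around the sphere of radius $\rho$ is an $O(1)$ rotation. Only after passing to the Boothby-parallel gauge, and conjugating the curvature accordingly, would the error become small.
\item The fallback of checking linear independence of $F$ at three different points is not a valid criterion. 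A $\mathfrak{u}(1)$-reduction only forces the curvature at each point into a line, and that line may vary from point to point in any fixed trivialisation.
\end{itemize}

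\textbf{The missing observation is that a single point suffices.} Near $p_i$ write $h^{-1}=\rho^2/a_i-b_i\rho^4/a_i^2+O(\rho^5)$. The term that matters is the quartic one: any $|x|^2$ term in $h^{-1}$ has pure-trace Hessian and is invisible to $Q_0$. This gives $Q_0=\gamma(\hat x^\flat\otimes\hat x^\flat-\tfrac14\delta)+O(\rho)$ with $\gamma=-8b_i/a_i$, and $\gamma\neq0$ whenever $(c,n)\neq(0,1)$.

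In an orthonormal basis with $e_0=\hat x$ the eigenvalues are $(\tfrac34\gamma,-\tfrac14\gamma,-\tfrac14\gamma,-\tfrac14\gamma)$. Since $R^B(e_a,e_b)=(q_a+q_b)[e^{ab}]_-$ in a diagonalising basis, you get $R^B(e_0,e_j)=\tfrac12\gamma[e^{0j}]_-+O(\rho)$ for $j=1,2,3$. So at one point close to $p_i$ the curvature is, to leading order, an isomorphism $\Lambda^2_+\to\Lambda^2_-$, and its image is already all of $\Lambda^2_-\cong\mathfrak{su}(2)$.

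To finish, apply Ambrose--Singer at that point and use openness of the spanning condition to absorb the $O(\rho)$ error; no parallel transport is needed. You also need simple connectivity of $U$, which you should state explicitly: it takes you from $\hol=\mathfrak{su}(2)$ to $\Hol=\sunitary{2}$, and gives $\Hol=\{1\}$ in the two flat cases.
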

The Proposition in particular shows that there are infinitely many complete Bismut--Hermitian--Einstein metrics in complex dimension $2$ with full holonomy $\sunitary{2}$. Since $\alpha'\neq 0$, note that these examples have necessarily at least two complete ends, at least one of which must be cylindrical.

Now, the metric determined by \eqref{eq:IIB:GH} is the Riemannian product
\begin{equation}\label{eq:chs:metric}
g = g_{\C}\oplus g_h, \qquad g_h := h\,\delta ,
\end{equation}
where $\delta$ denotes the flat metric of $\C^2 \cong \R^4$, and therefore we focus our analysis on the four-dimensional factor $(U,g_h)$. Note that the torsion $3$-form of the Bismut connection $\nabla^{\tu B,h}$ of $g_h$ is
\beq\label{eq:chs:H}
H \;=\; -\ast_\delta dh \;=\; -\ast_{g_h} d\log h.
\eeq
It is not difficult to see that $g_h$ is Bismut--Hermitian--Einstein, and therefore the holonomy of its Bismut connection is contained in $\sunitary{2}$, since $U$ is simply connected.

Let $x_0,x_1,x_2,x_3$ be the standard coordinates on $\R^4$, with
$z_1=x_0+ix_1$, $z_2=x_2+ix_3$, and write $e^a:=dx_a$, $\partial_a:=\frac{\partial}
{\partial x_a}$. We use the natural orientation, so that $\vol_\delta = e^{0123}$ and
\[
\omega_{\C^2}=e^{01}+e^{23},
\]
where $e^{ab}:=e^a\wedge e^b$ and so on. The Hodge star operator of $g_h$ agrees with that of $\delta$ on $2$-forms, and we let
\[
\Lambda^2 = \Lambda^2_+\oplus\Lambda^2_-
\]
be the corresponding (conformally invariant) splitting into self-dual and anti-self-dual forms.  A basis of
$\Lambda^2_-$ is given by
\begin{equation}\label{eq:chs:sigma}
\sigma^-_1 = e^{01}-e^{23},\qquad
\sigma^-_2 = e^{02}-e^{31},\qquad
\sigma^-_3 = e^{03}-e^{12}.
\end{equation}
We work in the global $g_h$-orthonormal frame
\begin{equation}\label{eq:chs:frame}
f_a := h^{-1/2}\partial_a,\qquad f^a := h^{1/2}e^a.
\end{equation}
We denote by $\nabla^0$ the unique
connection for which $f_0,\dots,f_3$ are parallel.  Explicitly, writing $\nabla^\delta$ for the flat Euclidean connection
\begin{equation}\label{eq:chs:nabla0}
\nabla^0 \;=\; \nabla^\delta+\tfrac12\,d\log h\otimes\operatorname{Id}.
\end{equation}
For later use we record that
\beq\label{eq:chs:LCvsnabla0}
\nabla^{g_h}_XY = \nabla^0_XY + \tfrac12 \Big(d\log h(Y)\,X-g_h(X,Y) (d\log h)^{\sharp_{g_h}}\Big).
\eeq
We denote the connection one-form of the Bismut connection $\nabla^{\tu B,h}$ of $g_h$ in the trivialisation \eqref{eq:chs:frame} by
\beq\label{eq:chs:A}
A := \nabla^{\tu B,h}-\nabla^0 \;\in\; \Omega^1\big(U,\mathfrak{so}(TU,g_h)\big)
\cong \Omega^1\big(U,\Lambda^2T^\ast U\big).
\eeq
We use the convention $\mathfrak{so} \to \Lambda^2T^\ast \colon P \to \alpha_P$ with $\alpha_P(X,Y)=\delta(X,PY)$. We will write $X^\flat=\delta(X,\cdot)$ throughout.

\begin{lemma}\label{lem:chs:connection}
In the trivialisation \eqref{eq:chs:frame} the connection form
\eqref{eq:chs:A} of the Bismut connection of $g_h$ is
\beq\label{eq:chs:connectionform}
A(X) = \big[\,X^\flat\wedge d\log h\,\big]_-
\eeq
where $[\,\cdot\,]_-$ is the anti-self-dual projection.
\end{lemma}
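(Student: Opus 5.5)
Write $\theta:=d\log h$ and $\theta^\sharp:=(d\log h)^{\sharp_{g_h}}=h^{-1}\theta^{\sharp_\delta}$. The plan is a direct computation: write the Bismut connection as Levi-Civita plus torsion, with both pieces expressed relative to the frame connection $\nabla^0$. By definition,
\[
\nabla^{\tu B,h}_XY=\nabla^{g_h}_XY-\tfrac12 g_h^{-1}H(X,Y,\cdot),
\]
with $H=-\ast_{g_h}\theta$ as in \eqref{eq:chs:H}; this is the sign convention $\nabla^B=\nabla^g-\tfrac12 g^{-1}d^c\omega$ with $H=-d^c\omega$. So I would first read off $A(X)=\nabla^{\tu B,h}_X-\nabla^0_X$ from \eqref{eq:chs:LCvsnabla0} and then add the torsion term.

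For the Levi-Civita part, convert the endomorphism $P_X(Y)=\tfrac12\bigl(\theta(Y)X-g_h(X,Y)\theta^\sharp\bigr)$ into a 2-form using the convention $\alpha_P(Z,Y)=\delta(Z,PY)$. Since $g_h=h\delta$ and $h\,\theta^\sharp=\theta^{\sharp_\delta}$, one gets
\[
\delta(Z,P_XY)=\tfrac12\bigl(\theta(Y)X^\flat(Z)-X^\flat(Y)\theta(Z)\bigr),
\]
that is, $\alpha_{P_X}=\tfrac12\,X^\flat\wedge\theta$ up to an overall sign fixed by the ordering convention. Note that the factor $h$ cancels exactly here. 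For the torsion part, the endomorphism $Y\mapsto-\tfrac12 g_h^{-1}H(X,Y,\cdot)$ corresponds, after pairing with $\delta$, to the 2-form $-\tfrac12 h^{-1}\iota_X H$. Now $H=-\ast_\delta d h=-h\ast_\delta\theta$, so this equals $\tfrac12\,\iota_X\ast_\delta\theta$. The key four-dimensional identity is
\[
\iota_X\ast_\delta\theta=\pm\ast_\delta\bigl(X^\flat\wedge\theta\bigr).
\]
It can be checked on basis vectors, e.g.\ $\iota_{\partial_0}\ast e^1=\iota_{\partial_0}(-e^{023})=-e^{23}=-\ast e^{01}$.

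Adding the two pieces gives
\[
A(X)=\tfrac12\bigl(X^\flat\wedge\theta\mp\ast(X^\flat\wedge\theta)\bigr),
\]
which is either the anti-self-dual or the self-dual projection of $X^\flat\wedge\theta$. It remains to check that it is the anti-self-dual one. I expect the main obstacle to be exactly this sign bookkeeping: the $\mathfrak{so}\cong\Lambda^2$ convention, the ordering in the Levi-Civita formula, the sign of $H$, and the orientation all enter. To pin it down without tracking every sign, I would use a structural check. Both $\nabla^{\tu B,h}$ and $\nabla^0$ preserve the complex structure and $\omega_{\C^2}$, because the Bismut connection is Hermitian and the frame $f_a$ is unitary for $e^{01}+e^{23}$. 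Hence $A(X)$ must lie in $\mathfrak u(2)$, which under this identification is $\Lambda^2_-\oplus\R\,\omega_{\C^2}$. A self-dual answer $[X^\flat\wedge\theta]_+$ would generally have nonzero components along $e^{02}+e^{31}$, which is not in $\mathfrak u(2)$, so the sign must be the one that kills the self-dual part. Moreover the $\omega_{\C^2}$-component of $A$ vanishes, because the computed form contains no piece beyond the projection. This also agrees with the expectation that the holonomy lies in $\sunitary{2}=\exp\Lambda^2_-$. I would nonetheless verify the sign directly with $X=\partial_0$ and $\theta=e^1$, where $X^\flat\wedge\theta=e^{01}$, as a final consistency check.
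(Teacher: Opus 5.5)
Your proof is correct and follows the paper's argument. The Levi-Civita part $\tfrac12 X^\flat\wedge d\log h$ is read off from \eqref{eq:chs:LCvsnabla0}, with the factor $h$ cancelling as you note. The torsion part is converted via $\iota_X\ast_\delta\theta=-\ast_\delta(X^\flat\wedge\theta)$, which is the paper's $\iota_XH=h\ast_\delta(X^\flat\wedge d\log h)$. Adding the two gives $[X^\flat\wedge d\log h]_-$.

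Your $\mathfrak u(2)$ argument is a valid way to fix the sign. It uses that $\nabla^{\tu B,h}$ and $\nabla^0$ both preserve $J$ and $g_h$; strictly, $\nabla^0$ preserves $h\,\omega_{\C^2}$ rather than $\omega_{\C^2}$, which is harmless. Your write-up in fact relies on this argument, because two intermediate signs are wrong. Since $H=-d^c\omega$, the convention $\nabla^B=\nabla^g-\tfrac12 g^{-1}d^c\omega$ gives $+\tfrac12 g_h^{-1}H(X,Y,\cdot)$, not $-\tfrac12 g_h^{-1}H(X,Y,\cdot)$. Also, the $\delta$-pairing of $Y\mapsto-\tfrac12 g_h^{-1}H(X,Y,\cdot)$ is $+\tfrac12 h^{-1}\iota_XH$, not $-\tfrac12 h^{-1}\iota_XH$. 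These two slips happen to cancel.
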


\begin{proof}
A standard calculation using \eqref{eq:chs:LCvsnabla0} shows that the connection one-form of the Levi-Civita connection of $g_h$, in the given frame, is
\beq\label{eq:chs:ALC}
A^{g_h}(X) = \tfrac12 X^\flat\wedge d\log h.
\eeq
Using now  $H=-\ast_{g_h} d \log h$, so that  $ \iota_XH = h \ast_{\delta}(X^\flat\wedge d \log h)$, 
we obtain
\[
A(X)=\tfrac12 X^\flat\wedge d\log h -\tfrac12 \ast(X^\flat\wedge d \log h)
= [X^\flat\wedge d \log h]_-.\qedhere
\]
\end{proof}

The curvature of $\nabla^{\tu B,h}$ is
$R^{\tu B, h}=dA+A\wedge A$, where
$(A\wedge A)(X,Y)=[A(X),A(Y)]$. To calculate an explicit formula, define
the symmetric $2$--tensor
\beq\label{eq:chs:Q}
Q \;:=\; h\,\Hess_\delta\!\big(h^{-1}\big)
\;=\;-\Hess_\delta(\log h)+d\log h\otimes d\log h.
\eeq
Using that $h$ is harmonic, $\Delta_\delta\log h = -\tr_\delta \Hess_\delta (\log h) =\abs{d\log h}_\delta^2$, so
\beq\label{eq:chs:trQ}
\tr_\delta Q = 2\abs{d\log h}_\delta^2 ,
\eeq
and we write
\beq\label{eq:chs:Qzero}
Q_0 \;:=\; Q-\tfrac14(\tr_\delta Q)\,\delta
\;=\;-\Hess_\delta(\log h)+d\log h\otimes d\log h-\tfrac12\abs{d\log h}^2_\delta\,\delta
\eeq
for its trace-free part. Note that $Q_0$ is also the $g_h$--trace-free part of $Q$, since $\tr_{g_h}Q=h^{-1}\tr_\delta Q$. For a symmetric $2$-tensor $S$, we define the $\Lam^2_-$--valued $2$-form
\beq\label{eq:chs:wedgeop}
\mathcal{R}(S)(X,Y) = \big[\,X^\flat\wedge S(Y,\cdot)-Y^\flat\wedge S(X,\cdot)\,\big]_-.
\eeq
Note that if $S$ is traceless, then $\mathcal{R}(S)$ is a linear map $\Lambda^2_+\ra \Lambda^2_-$, \ie $\mathcal{R}(S)$ vanishes when eveluated on anti-self-dual forms (a well-known fact from the decomposition of the curvature operator of Riemannian 4-manifolds).

\begin{lemma}\label{lem:chs:curvature}
The curvature of the Bismut
connection of $g_h$ is
\beq\label{eq:chs:curvature}
R^{\tu B,h}(X,Y) = \big[\,X^\flat\wedge Q_0(Y,\cdot)-Y^\flat\wedge Q_0(X,\cdot)\,\big]_-
= \mathcal{R}(Q_0)(X,Y),
\eeq
with $Q_0$ as in \eqref{eq:chs:Qzero}.
\end{lemma}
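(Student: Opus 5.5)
We compute $R^{\tu B,h}=dA+A\wedge A$ directly from Lemma~\ref{lem:chs:connection}, with $A(X)=[X^\flat\wedge \theta]_-$ and $\theta:=d\log h$, in the frame \eqref{eq:chs:frame}. Since $\nabla^0$ is flat with parallel frame $f_a$, the curvature of $\nabla^{\tu B,h}$ in this trivialisation is the $\Lambda^2_-$-valued $2$-form $dA+A\wedge A$. To keep track of the frame, write $A=\sum_a f^a\otimes A(f_a)$ with $A(f_a)=[h^{1/2}e^a\wedge\theta]_-$. Equivalently, $A=\sum_a e^a\otimes h[e^a\wedge\theta]_-$, where we convert to the $g_h$-orthonormal basis of $\Lambda^2_-$, which differs from $\sigma^-_i$ by the factor $h$ from \eqref{eq:chs:frame}. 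The cleanest choice is to express everything in the Euclidean coframe $e^a$ and the $\delta$-orthonormal basis $\sigma_i^-$ of \eqref{eq:chs:sigma}, remembering the rescaling $f^a\wedge f^b=h\,e^{ab}$. Alternatively, we can note that $\nabla^0$ is flat and $A$ is a genuine $\mathfrak{so}$-valued $1$-form on $U$, so no frame derivative terms appear beyond the differentiation of the components.

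Concretely, we expand $A=\sum_i \sigma_i^-\otimes a_i$ with scalar $1$-forms $a_i$ (the $\Lambda^2_-$ components). Using $[e^a\wedge\theta]_-=\tfrac12\sum_i \langle e^a\wedge\theta,\sigma_i^-\rangle_\delta\sigma_i^-$, each $a_i$ is obtained from $\theta$ by applying the complex structure $I_i$ associated with $\sigma_i^-$: up to sign and normalisation, $a_i=\tfrac12 I_i\theta$, i.e.\ $a_i(X)=\tfrac12\sigma_i^-(X,\theta^\sharp)$. Then:

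\emph{Step 1.} Compute $da_i$. This is $\tfrac12 d(I_i\theta)$, which involves $\Hess_\delta\log h$ through the identity $d(\iota_{\theta^\sharp}\sigma_i^-)=\mathcal L_{\theta^\sharp}\sigma_i^-$ (since $\sigma_i^-$ is closed). Because $\log h$ is not harmonic, we use $\Delta\log h=|\theta|^2$ when separating the self-dual and anti-self-dual parts of $da_i$: the anti-self-dual part of $\mathcal L_{\theta^\sharp}\sigma_i^-$ is governed by the trace of $\Hess(\log h)$, the self-dual part by its trace-free part.

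\emph{Step 2.} Compute the quadratic term $\tfrac12\sum_{j,k}[\sigma_j^-,\sigma_k^-]\,a_j\wedge a_k$ using the $\mathfrak{su}(2)$ relations $[\sigma_j^-,\sigma_k^-]=\pm 2\sigma_l^-$. This produces $\tfrac14(I_j\theta\wedge I_k\theta)$, which decomposes into $\theta\otimes\theta$ and $|\theta|^2\delta$ pieces in the $\mathcal R(\cdot)$ form \eqref{eq:chs:wedgeop}.

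\emph{Step 3.} Assemble everything into the form $\mathcal R(S)$. The sum of Steps 1 and 2 should be $\mathcal R(-\Hess\log h+\theta\otimes\theta+c\,|\theta|^2\delta)$ for some constant $c$. The pure-trace parts that would produce components in $\Lambda^2_-\otimes\Lambda^2_-$ must cancel; this uses $\Delta\log h=|\theta|^2$, i.e.\ harmonicity of $h$, via \eqref{eq:chs:trQ}. The cancellation is consistent with Bismut--Hermitian--Einstein and with $\mathcal R$ of a trace-free tensor vanishing on $\Lambda^2_-$, and it leaves exactly $\mathcal R(Q_0)$, with $Q_0$ as in \eqref{eq:chs:Qzero}. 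Finally, we convert the $e^{ab}$ back to $f^a\wedge f^b$; the factor $h$ from this conversion cancels against the conversion of $\Lambda^2_-$ bases, giving \eqref{eq:chs:curvature} with $X^\flat$ interpreted consistently.

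The main obstacle is the bookkeeping in Step 3: the relative normalisations (the factors of $h$ and $\tfrac12$, and the sign conventions for $\alpha_P$) must be handled so that the trace terms cancel exactly. I would first verify the formula on the Boothby case $h=r^{-2}$. There $\theta=-2dr/r$ and $-\Hess\log h+\theta\otimes\theta-\tfrac12|\theta|^2\delta=0$, so $Q_0=0$ and the curvature vanishes, consistent with Bismut flatness. This fixes the constants. As a cross-check, I would use the general identity $R^B=R^{g}+\tfrac12\nabla H+\tfrac14[H,H]$ together with the known conformal change of the Weyl and Ricci curvature under $g_h=h\delta$.
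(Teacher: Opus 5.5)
Your proposal is correct and follows the paper's proof. The paper also computes $R^{\tu B,h}=dA+A\wedge A$ from Lemma~\ref{lem:chs:connection}, using $dA=-\mathcal{R}(\Hess_\delta\log h)$ (your Step 1), $[A(X),A(Y)]=\mathcal{R}\big(d\log h\otimes d\log h-\tfrac12\abs{d\log h}^2_\delta\,\delta\big)(X,Y)$ (your Step 2), and harmonicity in the form $\tr_\delta Q=2\abs{d\log h}^2_\delta$ to recognise the sum as $\mathcal{R}(Q_0)$ (your Step 3).

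The factors of $h$ you worry about never arise with the paper's convention $\alpha_P(X,Y)=\delta(X,PY)$. The matrix entries of $A(X)$ in the frame $f_a$ are exactly the $e^{ab}$-components of $[X^\flat\wedge d\log h]_-$, so the whole computation takes place on $(\R^4,\delta)$; in particular $A(f_a)=h^{-1/2}[e^a\wedge d\log h]_-$, not $h^{1/2}$.
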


\begin{proof}
A straightforward calculation shows that
\beq\label{eq:chs:dA}
dA = -\mathcal{R}\big(\Hess_\delta(\log h)\big).
\eeq
On the other hand, it is not difficult to see that for a $1$--form $L$ and any vector fields $X,Y$,
\beq\label{eq:chs:bracket}
\big[\,[X^\flat\wedge L]_-\,,\,[Y^\flat\wedge L]_-\,\big]
\;=\;\mathcal{R}\big(L\otimes L-\tfrac12\abs{L}^2_\delta \delta \big)(X,Y).
\eeq
Hence, applying Lemma \ref{lem:chs:connection},
\beq\label{eq:chs:AA}
(A\wedge A)(X,Y)=\big[A(X),A(Y)\big]
= \mathcal{R}\big(d\log h\otimes d\log h-\tfrac12\abs{d\log h}^2_\delta \delta \big)(X,Y),
\eeq
and the proof concludes by adding \eqref{eq:chs:dA}.
\end{proof}

As a corollary, we obtain the following:

\begin{corollary}\label{cor:chs:flat}
Let $h>0$ be harmonic on a connected open set $U\subseteq\R^4$.  The Bismut
connection of $g_h$ is flat if and only if 
\[
h\equiv\text{const}\qquad\text{or}\qquad h=\frac{a}{\abs{x-p}^2}
\ \ \text{ for some }a>0,\ p\in\R^4 .
\]
Furthermore, $R^{\tu B, h}$ is a
\emph{self-dual} $2$-form with values in $\Lam^2_-$, that is, it defines an $\sunitary{2}$-instanton for $g_h$ with respect to the opposite orientation.
\end{corollary}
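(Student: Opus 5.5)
By Lemma~\ref{lem:chs:curvature} we have $R^{\tu B,h}=\mathcal{R}(Q_0)$ with $Q_0$ trace-free, so the plan is to reduce both claims to properties of the linear map $S\mapsto\mathcal{R}(S)$ on trace-free symmetric tensors, and then to solve an overdetermined PDE for $h$.

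\emph{Self-duality.} As noted after \eqref{eq:chs:wedgeop}, for trace-free $S$ the $\Lambda^2_-$-valued $2$-form $\mathcal{R}(S)$ vanishes when evaluated on anti-self-dual bivectors. Hence, as a $2$-form, $R^{\tu B,h}$ has no $\Lambda^2_-$ component, i.e.\ it lies in $\Lambda^2_+\otimes\Lambda^2_-$. Since the Hodge star on $2$-forms is conformally invariant, self-duality with respect to $\delta$ is the same as self-duality with respect to $g_h$. This gives the instanton statement for the opposite orientation; the structure group is $\sunitary{2}\simeq\Lambda^2_-$.

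\emph{Flatness iff $Q_0=0$.} I would show that $S\mapsto\mathcal{R}(S)$ is injective from $S^2_0$ into $\operatorname{Hom}(\Lambda^2_+,\Lambda^2_-)$. This is the classical isomorphism $S^2_0\mathbb{R}^4\cong\Lambda^2_+\otimes\Lambda^2_-$; both sides have dimension $9$. Injectivity can be checked in two ways. One can use $\tu{SO}(4)$-equivariance and irreducibility of $S^2_0$: the map is equivariant and nonzero, so by Schur it is an isomorphism. Alternatively, one computes $\mathcal{R}(S)$ explicitly on the basis $\sigma^+_i$ for diagonal $S$ and uses equivariance. So the connection is flat iff $Q_0=0$. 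Set $f=h^{-1}$. By \eqref{eq:chs:Q}, $Q=h\,\Hess_\delta f$, so $Q_0=0$ means $\Hess_\delta f=\psi\,\delta$ for some function $\psi$.

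\emph{Solving $\Hess_\delta f=\psi\delta$.} Differentiating gives $\partial_k\partial_i\partial_j f=\partial_k\psi\,\delta_{ij}$. Symmetry in $(k,i)$ then yields $\partial_k\psi\,\delta_{ij}=\partial_i\psi\,\delta_{kj}$, and taking $i=j\neq k$ forces $d\psi=0$ on the connected set $U$. Hence $f$ is either affine ($\psi=0$) or of the form $A|x-p|^2+B$ with $A\neq0$. Now impose harmonicity of $h=1/f$.
\begin{itemize}
\item In the affine case $f=\langle a,x\rangle+b$, we get $\Delta_\delta(1/f)=\pm 2|a|^2f^{-3}$, which vanishes only if $a=0$. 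So $h$ is constant.
\item In the quadratic case, writing $r=|x-p|$, the radial Laplacian in $\R^4$ gives $g''+\tfrac3r g'\propto AB\,(Ar^2+B)^{-3}$ for $g=(Ar^2+B)^{-1}$. This vanishes only if $B=0$, so $h=a/|x-p|^2$ with $a=1/A>0$ by positivity of $h$.
\end{itemize}
Conversely, both families visibly satisfy $\Hess_\delta f=\psi\delta$, hence $Q_0=0$ and the connection is flat.

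The step I expect to need the most care is the injectivity of $\mathcal{R}$ on $S^2_0$. I would try the Schur argument first, and fall back on a direct check for diagonal trace-free $S$ combined with equivariance. The remaining steps are routine computations.
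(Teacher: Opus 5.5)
Your proposal is correct and follows essentially the paper's route. You reduce flatness to $Q_0=0$ via injectivity of $\mathcal{R}$ on trace-free tensors, use the symmetry of third derivatives to show the Hessian factor is constant, and then impose harmonicity on the resulting quadratic $h^{-1}$. The paper does this last step in one stroke, via $\Delta_\delta h=-(2|B|^2-4\lambda C)h^3$ for $h^{-1}=\tfrac{\lambda}{2}|x|^2+B\cdot x+C$, rather than splitting into affine and quadratic cases.

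The only real difference is in the injectivity step. The paper gets both self-duality and injectivity at once from the diagonal formula $\mathcal{R}(Q_0)(e_a,e_b)=(q_a+q_b)[e^{ab}]_-$, which is your fallback. Your Schur argument is an equally valid alternative. Applied to the $\Lambda^2_-\otimes\Lambda^2_-$ component, it would also prove the self-duality that you quote from the remark after \eqref{eq:chs:wedgeop}.
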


\begin{proof}
By Lemma \ref{lem:chs:curvature}, $R^{\tu B,h}=0$ if and only if
$\mathcal{R}(Q_0)=0$. We make a change of positively oriented orthonormal basis, so that $Q_0$ is diagonalised, with eigenvalues $q_0,q_1,q_2,q_3$. Then, a straightforward calculation shows that
\beq\label{eq:chs:Rab}
\mathcal{R}(Q_0)(e_a,e_b)=(q_a+q_b)\,[e^{ab}]_-.
\eeq
From this, it follows that $R^{\tu B,h}$ is an instanton for $g_h$ with respect to the opposite orientation, and that $R^{\tu B,h} = 0$ implies $Q_0 = 0$. Now $Q_0=0$ is equivalent to $\Hess_\delta(h^{-1})=\lambda\,\delta$ for a function
$\lambda$. differentiating, $\partial_k\lambda\,\delta_{ij}$ must be totally
symmetric, and therefore $\lambda$ is constant and $h^{-1}=\tfrac{\lambda}{2}\abs{x}^2+B\cdot x+C$. Now, we have
$$
\Delta_\delta h = -(2|B|^2 - 4 \lambda C)h^3,
$$
and, since $h>0$ is harmonic, $|B|^2 = 2\lambda C$. If $\lambda = 0$, then $B = 0$ and $h$ is constant. If $\lambda \neq 0$, completing squares, $h=\tfrac{2}{\lambda }\abs{x + \lambda^{-1}B}^{-2}$, as claimed. The converse is trivial by Lemma \ref{lem:chs:curvature}.
\end{proof}

Fix $h$ as in \eqref{eq:IIB:GH:harmonic} and a centre $p_i$. Write
$\rho:=\abs{z-p_i}$, $x:=z-p_i$, and split
\beq\label{eq:chs:split}
h=\frac{a_i}{\rho^2}+w_i,\qquad
a_i:=\alpha'k_i>0,\qquad
w_i(z):=c+\alpha'\sum_{j\neq i}\frac{k_j}{\abs{z-p_j}^2},
\eeq
so that $w_i$ is harmonic and smooth near $p_i$.  Set
\beq\label{eq:chs:bi}
b_i:=w_i(p_i)=c+\alpha'\sum_{j\neq i}\frac{k_j}{\abs{p_i-p_j}^2}\;\geq\;0 .
\eeq
Fix $\rho_0>0$ smaller than the distance from $p_i$ to the other centres, and let $\hat x:=x/\rho$ be the outward radial \emph{vector} field on the punctured ball $B_{\rho_0}(p_i)\setminus\{p_i\}$, of unit length with respect to $\delta$. 

\begin{lemma}\label{lem:chs:asymptotics}
With the notation \eqref{eq:chs:split}--\eqref{eq:chs:bi}, as $z\ra p_i$ we have the following expansion, uniformly in the direction $\hat x$:
\beq\label{eq:chs:Qasymp}
Q_0 \;=\; -\frac{8b_i}{a_i}\Big(\hat x^\flat\otimes\hat x^\flat - \tfrac14\delta\Big)+O(\rho).
\eeq
\end{lemma}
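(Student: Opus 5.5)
The plan is a direct Taylor expansion of $\log h$ near $p_i$, substituted into the expression \eqref{eq:chs:Qzero} for $Q_0$. Set $x = z-p_i$ and $\rho=\abs{x}$. Near $p_i$ write
\[
h = \frac{a_i}{\rho^2}\Big(1 + \frac{\rho^2 w_i}{a_i}\Big), \qquad \log h = \log a_i - 2\log\rho + \log(1+\epsilon), \qquad \epsilon := \frac{\rho^2 w_i}{a_i}.
\]
Since $w_i$ is smooth near $p_i$ with $w_i(p_i)=b_i$, we have $\epsilon = \frac{b_i}{a_i}\rho^2 + O(\rho^3)$. The key point is that $\log(1+\epsilon)$ is a smooth function in a neighbourhood of $p_i$: it is $\frac{b_i}{a_i}\rho^2 + O(\rho^3)$, its derivatives are $O(\rho)$, and its second derivatives equal $\frac{2b_i}{a_i}\delta + O(\rho)$.

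Next I would compute the contribution of $\ell_0 := -2\log\rho$ separately. Its derivatives are
\[
d\ell_0 = -\frac{2}{\rho}\hat x^\flat, \qquad \Hess_\delta \ell_0 = -\frac{2}{\rho^2}\big(\delta - 2\hat x^\flat\otimes\hat x^\flat\big).
\]
Substituting into \eqref{eq:chs:Qzero} gives
\[
-\Hess_\delta\ell_0 + d\ell_0\otimes d\ell_0 - \tfrac12\abs{d\ell_0}^2\delta = \frac{2}{\rho^2}\big(\delta - 2\hat x\hat x\big) + \frac{4}{\rho^2}\hat x\hat x - \frac{2}{\rho^2}\delta = 0.
\]
This is consistent with Corollary \ref{cor:chs:flat}, since $a_i/\rho^2$ alone is Bismut flat. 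So all surviving terms must involve $f:=\log(1+\epsilon)$.

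Expanding $Q_0$ with $\log h = \ell_0 + f$ (the constant $\log a_i$ drops out), the remaining terms are
\[
-\Hess f + d\ell_0\otimes df + df\otimes d\ell_0 + df\otimes df - \langle d\ell_0, df\rangle\delta - \tfrac12\abs{df}^2\delta.
\]
Using $df = \frac{2b_i}{a_i}x^\flat + O(\rho^2) = \frac{2b_i}{a_i}\rho\,\hat x^\flat + O(\rho^2)$ and $\Hess f = \frac{2b_i}{a_i}\delta + O(\rho)$, each term evaluates as follows:
\begin{itemize}
\item $-\Hess f$ gives $-\frac{2b_i}{a_i}\delta$;
\item the two cross terms $d\ell_0\otimes df + df\otimes d\ell_0$ give $-\frac{8b_i}{a_i}\hat x\hat x$;
\item $-\langle d\ell_0,df\rangle\delta$ gives $+\frac{4b_i}{a_i}\delta$;
\item the quadratic terms in $df$ are $O(\rho^2)$.
\end{itemize}
Summing, the result is $-\frac{8b_i}{a_i}\hat x\hat x + \frac{2b_i}{a_i}\delta + O(\rho)$, which equals $-\frac{8b_i}{a_i}\big(\hat x\hat x - \frac14\delta\big) + O(\rho)$, as claimed.

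The only delicate step is the error bookkeeping. The cross terms pair the $O(\rho^{-1})$ coefficient of $d\ell_0$ with $df$, so one needs $df = \frac{2b_i}{a_i}x^\flat + O(\rho^2)$, not merely $O(\rho)$, to obtain an $O(\rho)$ remainder. This holds because $f$ is smooth with Taylor expansion $\frac{b_i}{a_i}\abs{x}^2 + O(\abs{x}^3)$: the linear term of $w_i$ at $p_i$ enters $\epsilon$ only at cubic order. Smoothness of $f$ also makes all the $O(\cdot)$ bounds uniform in the direction $\hat x$.
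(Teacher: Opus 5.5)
Your proof is correct, and it follows a different route from the paper's.

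The paper does not work with the logarithmic form \eqref{eq:chs:Qzero}. It uses the multiplicative form $Q = h\,\Hess_\delta(h^{-1})$ from \eqref{eq:chs:Q}. Since $h^{-1} = \rho^2/(a_i + w_i\rho^2)$ is smooth at $p_i$, it expands $h^{-1} = \frac{\rho^2}{a_i} - \frac{b_i}{a_i^2}\rho^4 + O(\rho^5)$. It then notes that $\Hess_\delta(\rho^2) = 2\delta$ is pure trace, so it drops out of the trace-free part. The trace-free part of $\Hess_\delta(\rho^4)$ is $8\big(x^\flat\otimes x^\flat - \tfrac14\rho^2\delta\big)$, and multiplying by $h = a_i\rho^{-2}(1+O(\rho^2))$ finishes the proof.

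The paper's route never meets a singular term or a cancellation. The Bismut-flatness of the Boothby piece is encoded simply in $\Hess_\delta(\rho^2)$ being pure trace. The error bookkeeping is one line: an $O(\rho^3)$ Hessian remainder times $\rho^{-2}$.

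Your route splits $\log h$ into the singular part $\ell_0=-2\log\rho$ and the smooth correction $f$. You then have to check that the $\rho^{-2}$ terms cancel exactly, and track the cross terms that pair $d\ell_0 = O(\rho^{-1})$ with $df$. That is why you need the sharper estimate $df = \tfrac{2b_i}{a_i}x^\flat + O(\rho^2)$, which you justify correctly via smoothness of $f$.

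In return, your decomposition makes explicit that the leading singular piece is exactly the Bismut-flat model of Corollary \ref{cor:chs:flat}. It also shows that $b_i$ enters $Q_0$ only through its interaction with that model. Both arguments are complete.
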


\begin{proof}
Write $w_i=b_i+O(\rho)$.  Then
\[
h^{-1}=\frac{\rho^2}{a_i}\Big(1+\frac{w_i\rho^2}{a_i}\Big)^{-1}
=\frac{\rho^2}{a_i}-\frac{b_i}{a_i^2}\rho^4+O(\rho^5).
\]
Now $\Hess_\delta(\rho^2)=2\delta$ is pure trace, while
$\Hess_\delta(\rho^4)=8\,x^\flat\otimes x^\flat +4\rho^2\delta$ has trace-free part
$8\big(x^\flat \otimes x^\flat -\tfrac14\rho^2\delta\big)$.  Hence the trace-free part of
$\Hess_\delta(h^{-1})$ is $-\tfrac{8b_i}{a_i^2}\big(x^\flat\otimes
x^\flat-\tfrac14\rho^2\delta\big)+O(\rho^3)$, and multiplying by
$h=a_i\rho^{-2}(1+O(\rho^2))$ gives \eqref{eq:chs:Qasymp}, since $x^\flat=\rho d\rho$ and therefore
$x^\flat\otimes x^\flat=\rho^2 \hat x^\flat\otimes\hat x^\flat$.
\end{proof}

\begin{proof}[Proof of Proposition \ref{propo:chs:hol}]

By \eqref{eq:chs:metric} the solution of the IIB system in $\C \times U$ is a Riemannian product of flat $\C$ with
$(U,g_h)$, the complex structure is a product complex structure and $H$ is pulled back from $U$. Hence, the Bismut holonomy of the solution is $\Hol(\nabla^{\tu B,h}) \subset \sunitary{3}$, acting trivially on the $\C$ factor. We assume that $\alpha' \neq 0$, since the case $\alpha'= 0$ is trivial. Fix a centre $p_i$. By definition, $b_i>0$ unless $c = 0$ and $n=1$. This case is flat by Corollary \ref{cor:chs:flat}, and hence the statement also holds. From now on we  assume this is not the case.  Recall that $\nabla^{\tu B, h}$ has holonomy contained in $\sunitary{2}$, and therefore 
$$
R^{\tu B,h}_z(e_a,e_b) = \mathcal{R}(Q_0)(e_a,e_b) \in \mathfrak{su}(2) \cong \Lambda^2_-.
$$
Furthermore, by the Ambrose--Singer theorem, the holonomy algebra $\hol_z(\nabla^{\tu B,h})$ contains all the endomorphisms $R^{\tu B,h}_z(e_a,e_b)$. We claim that, for $z$ sufficiently close to $p_i$, $\hol_z(\nabla^{\tu B,h}) = \mathfrak{su}(2)$. To prove this, it suffices to show that the endomorphisms $\mathcal{R}(Q_0(z))(e_a,e_b)$ span a three-dimensional subspace. We set $\gamma:=-8b_i/a_i \in \R_{<0}$. Fix a point $z_0 \in B_{\rho_0}(p_i)\setminus\{p_i\}$ and denote by $\hat x_0$ the associated radial vector field of unit length with respect to $\delta$. We choose the $\delta$-orthonormal basis $e_j$ such that $e_0 = \hat x_0$, and positively oriented. For points $z$ along the radius $\ell = \{p_i + \rho \hat x_0 \; | \; 0 < \rho < \rho_0\}$, by Lemma \ref{lem:chs:asymptotics} we have 
$$
Q_0(z) = T + O(\rho),
$$
where $T:=\gamma\big(
\hat x_0^\flat\otimes \hat x_0^\flat-\tfrac14\delta\big)$. The eigenvalues of $T$ are given by
\[
q_0=\tfrac34\gamma ,\qquad
q_1=q_2=q_3=-\tfrac14\gamma.
\] 
Furthermore, applying \eqref{eq:chs:Rab}, we have
\begin{equation*}
\mathcal{R}(T)(e_0,e_j)=(q_0+q_j)\,[e^{0j}]_- = \tfrac12\gamma[e^{0j}]_- \in \Lambda^2_-,
\end{equation*}
and these span $\Lambda^2_-$. Since spanning a three-dimensional subspace is an open condition, the claim follows. Finally $U$ is simply connected, so
$\Hol(\nabla^{\tu B,h})=\Hol^0(\nabla^{\tu B,h})$ is the connected subgroup with Lie algebra
$\mathfrak{su}(2)$, that is, $\Hol(\nabla^{\tu B,h})=\sunitary{2}$.
\end{proof}

\subsection{Explicit solutions of the $\sunitary{2}^2$--invariant IIB system} We will now describe some explicit solutions of the $\sunitary{2}^2$--invariant IIB system, \ie explicit solutions $(u,\phi)$ of the ODE system of Proposition \ref{prop:Coho1:IIB} in either of the two complex structures. In fact for each choice of complex structure and parameters $(\alpha',b,0)$ and, respectively, $(\alpha',0,\kappa)$ we will find an explicit solution $(u_\ast,\phi_\ast)$, which we will refer to as the \emph{Chamseddine--Volkov/Maldacena--Nu\~nez (CV--MN) solution}, since as far as we know these solutions first appeared in the work of these authors.

\subsubsection{The CV--MN solution on the resolved conifold}

We will derive the CV--MN solution on the resolved conifold by attempting to fibre the Boothby metric on $\C^2\setminus\{0\}$ over $\C\PP^1$ using the Chern connection of $\mathcal{O}(-1)=\mathcal{O}_{\C\PP^1}(-1)$.

Let $\eta_1,\eta_2,\eta_3$ and $\eta'_1, \eta'_2, \eta'_3$ be left invariant $1$-forms on two copies of $S^3\simeq \tu{SU}(2)$ satisfying $d\eta_i = 2\eta_{j}\wedge\eta_k$ and $d\eta'_i=2\eta'_j\wedge\eta'_k$ for $(ijk)$ a cyclic permutation of $(123)$. Identify $\C\PP^1\simeq S^2$ with $\sunitary{2}/\unitary{1}$, where $\sunitary{2}$ is the first copy of $S^3$. Note that $[2\eta_1\wedge\eta_2\wedge\eta_3]$ is the generator of $H^3(S^3;4\pi^2\Z)$ and $2\eta_2\wedge\eta_3$ the generator of $H^2(S^2;2\pi\Z)$. Then we can think of $\eta_1$ as the unique rotationally invariant connection $1$-form on the principal circle bundle associated with $\mathcal{O}(-1)\ra\C\PP^1=\sunitary{2}/\unitary{1}$ and $2\eta_2\wedge\eta_3$ is the Fubini--Study K\"ahler form normalised to have area $2\pi$.

\begin{remark*}
In order to resolve some apparent confusion, observe that $\eta_1$ is a real $1$-form, so the Chern connection on $\mathcal{O}(-1)$ is induced by $i\eta_1$. The curvature of this connection is $F=d(i\eta)=2i\eta_2\wedge\eta_3$ and indeed $\frac{i}{2\pi}F$ has integral $-1$ over $\C\PP^1$.
\end{remark*}

Regard instead the second copy of $S^3$ as the unit sphere in $\C^2\setminus\{0\}$. In order to align with the conventions of the previous section, where we consider cohomogeneity one 6-manifolds with principal orbits $\sunitary{2}^2/\triangle\unitary{1}$, we need to orient Hopf fibres by $-\eta_1'$. Letting $t$ denote the radial parameter, note that the Boothby $\sunitary{2}$--structure on $\C^2\setminus\{ 0\}$ of the previous section can be written in spherical coordinates as
\begin{equation}\label{eq:Boothby:rotational}
\omega_{\tu{B}}=-\frac{dt}{t}\wedge\eta_1' - \eta_2'\wedge\eta_3', \qquad \Omega_{\tu{B}} = \frac{1}{t^2}(dt-it\eta_1')\wedge t (\eta_2'-i\eta_3').
\end{equation}

We consider the complement $M$ of the $0$-section in the vector bundle $\mathcal{O}(-1)\times_{\tu{U}(1)}\C^2=\mathcal{O}(-1)\oplus\mathcal{O}(-1)$ over $\C\PP^1$. The fibres of $M\ra\C\PP^1$ are copies of $\C^2\setminus\{ 0\}$. We can use the Chern connection of $\mathcal{O}(-1)$ to fibre the Boothby metric over $\C\PP^1$. This simply means replacing $-\eta'_1$ in \eqref{eq:Boothby:rotational} with $\eta=-\eta_1'+\eta_1$, yielding the $\sunitary{3}$--structure on $M$ given by
\[
\omega' = c\,\eta_2\wedge\eta_3 +   \alpha' \left( \frac{dt}{t}\wedge\eta - \eta'_2\wedge\eta'_3\right), \qquad \Omega' = e^{2\phi'}(dt+it\eta)\wedge t(\eta'_2-i\eta'_3)\wedge (\eta_2+i\eta_3)
\]
for positive constants $c$ and $\alpha'$. The function $e^{2\phi'}$ is determined by the $\sunitary{3}$--structure volume forms constraint and one can easily calculate that $e^{2\phi'}$ is a constant multiple of $t^{-2}$. The metric induced by $(\omega',\Omega')$ is a Riemannian submersion with base metric a multiple of the Fubini--Study metric on $\C\PP^1$ and fibres given by the Boothby metric on $\C^2\setminus\{0\}$ (rescaled by a factor $\alpha'$). One can check that $d(e^{-2\phi'}\Omega)=0$: in fact the induced complex structure $J$ is the standard one on $\mathcal{O}(-1)\oplus\mathcal{O}(-1)$. However, $(\omega',\Omega')$ is not a solution of the IIB system since neither $d^c\omega'$ nor $t^2\omega'\wedge\omega'$ are closed.

In a way reminiscent to other bundle constructions of special metrics starting from Calabi's construction of K\"ahler Ricci-flat metrics \cite{Calabi:Ansatz}, we therefore modify our ansatz introducing a positive function $f=f(t)$. Define an $\sunitary{3}$--structure $(\omega,\Omega)$ on the open set of $M$ where $f>0$ by
\begin{subequations}\label{eq:5brane:P1}
\begin{equation}
\omega = f\, \eta_2\wedge\eta_3 + \alpha' \left( \frac{dt}{t}\wedge\eta - \eta'_2\wedge\eta'_3\right), \qquad \Omega = e^{2\phi}(dt+it\eta)\wedge t(\eta'_2-i\eta'_3)\wedge (\eta_2+i\eta_3).
\end{equation}
Here $e^{2\phi}$ is uniquely defined so that $(\omega,\Omega)$ is an $\sunitary{3}$--structure. A rapid computation shows that
\begin{equation}
e^{2\phi}=\alpha' t^{-2}\sqrt{f(t)}.
\end{equation}
Using $Jdt=t\eta$ and the fact that $\eta_2\wedge\eta_3$ and $\eta'_2\wedge\eta'_3$ are of type $(1,1)$ with respect to $J$, we then calculate
\[
d^c\omega = (tf' -2\alpha')\,\eta\wedge\eta_2\wedge\eta_3 + 2\alpha' \eta\wedge\eta'_2\wedge\eta'_3,
\]
so that $dd^c\omega=0$ if and only if $f' = 4\alpha't^{-1}$, \ie
\begin{equation}\label{eq:}
f(t) = \tfrac{4}{3}\left(b-3\alpha'\log{\tfrac{2}{3}}+\tfrac{3}{4}\alpha'\right)+ 4\alpha' \log{t}
\end{equation}
\end{subequations}
for some constant $b$. (The reason for the particular way of writing the constant term of $f$ will become clear momentarily.) Here we used $d\eta=2\eta_2\wedge\eta_3- 2\eta_2'\wedge\eta'_3$ and the fact that each of the two summands on the right-hand side of this formula are closed. It turns out that the definition of $f$ is also equivalent to $d(e^{-2\phi}\omega^2)=0$. Indeed, we calculate
\[
\tfrac{1}{2}e^{-2\phi}\omega^2 = tf^\frac{1}{2} dt\wedge\eta\wedge\eta_2\wedge\eta_3 - \alpha' t f^{-\frac{1}{2}}dt\wedge\eta\wedge\eta'_2\wedge\eta'_3 - t^2 f^\frac{1}{2}\eta_2\wedge\eta_3\wedge\eta'_2\wedge\eta'_3.
\]
We conclude that \eqref{eq:5brane:P1} with $\alpha'>0$ defines a solution of the IIB system defined on the open set $\{ f(t) > 0\}$ of $M$.

In order to study the geometry of this solution, introduce the new variable
\[
\tau = \sqrt{\alpha'}\log{t} +\tfrac{1}{3\sqrt{\alpha'}}\left(b-3\alpha'\log{\tfrac{2}{3}}+\tfrac{3}{4}\alpha'\right),
\]
the arc-length parameter along a geodesic that meets all hypersurfaces $t=\tu{const}$ orthogonally. The metric induced by $(\omega,\Omega)$ takes the form
\[
d\tau^2 + 4\sqrt{\alpha'} \tau (\eta_2^2 + \eta_3^2) + \alpha' (\eta^2 + (\eta'_2)^2 + (\eta'_3)^2).
\]
For $\tau\ra 0$ this metric is incomplete, while for $\tau\ra\infty$ it is complete. The (in)completeness behaviour of this metric gives rise to an asymptotic geometry of an exotic kind, with an end foliated by parallel hypersurfaces that are Riemannian submersions over a $2$-sphere of radius proportional to the square root of the distance and constant 3-sphere fibres. Here the metric on the 3-sphere fibres is a round metric, but we can generalise this ansatz replacing the round metric on $S^3$ with (given the $\sunitary{2}^2$--invariant constraint within which we work in this paper) any Berger metric: given $\alpha',c>0$ define
\begin{equation}\label{eq:CVMN:end}
g_{\tu{CVMN}}(\alpha',c)= d\tau^2 + 4\sqrt{\alpha'} \tau (\eta_2^2 + \eta_3^2) + \alpha' \left(\eta^2 + c\left((\eta'_2)^2 + (\eta'_3)^2\right)\right).
\end{equation}

\begin{definition}\label{def:CVMN:Ends}
Let $(M,g)$ be a Riemannian manifold diffeomorphic to $I\times S^2\times S^3$ with $I\subset \R_{>0}$. We say that $(M,g)$ is
\begin{enumerate}
\item a \emph{complete CV--MN end} with parameters $(\alpha',c)$ if $I=(\tau_0,\infty)$ for some $\tau_0>0$ and $g-g_{\tu{CVMN}(\alpha',c)}=o(1)$ as $\tau\ra\infty$;
\item an \emph{incomplete CV--MN end} with parameters $(\alpha',c)$ if $I=(0,\tau_0)$ for some $\tau_0>0$ and $g-g_{\tu{CVMN}}(\alpha',c)=o(1)$ as $\tau\ra 0$.
\end{enumerate}    
\end{definition}

Now, there is a natural action of $\sunitary{2}^2$ on $M$ and the solution $(\omega,\Omega)$ of \eqref{eq:5brane:P1} is clearly invariant. We conclude this discussion by expressing this solution in terms of the parametrisation of Proposition \ref{prop:Coho1:IIB}.

First of all, in terms of the coframe $\{ \eta_i,\eta'_i\}$ the Sasaki--Einstein $\sunitary{2}$--structure $(\eta^\tu{se},\omega_1^\tu{se},\omega_2^\tu{se},\omega_3^\tu{se})$ and primitive $(1,1)$--form $\omega_0^\tu{se}$ on $\sunitary{2}^2/\triangle\unitary{1}$ can be written as in \eqref{eq:Homogeneous:SE:LeftInvariant}. Then after the change of variable
\begin{equation}\label{eq:SE:to:5brane}
4r^3=9t^2
\end{equation}
for $t>0$, the holomorphic volume form
\[
\Omega_0 = e^{-2\phi}\Omega =(dt+it\eta)\wedge t(\eta'_2-i\eta'_3)\wedge (\eta_2+i\eta_3) 
\]
is nothing but the holomorphic volume form $\Omega_\tu{C}$ of the conifold. Using \eqref{eq:SE:to:5brane} (note that in particular $2t^{-1}dt=3r^{-1}dr$) we can also write
\[
\omega = \tfrac{9}{4} r^{-1}dr\wedge\eta^{\tu{se}} + \tfrac{3}{4}(f+\alpha')\omega_1^\tu{se} + \tfrac{3}{4}(f-\alpha')\omega_0^\tu{se}.
\]
In other words, in the notation of Proposition \ref{prop:Coho1:IIB} we have
\[
u_0 = \tfrac{3}{4}(f-\alpha'), \qquad u = \tfrac{3}{4}(f+\alpha').
\]

\begin{definition}\label{def:CVMN:Resolved}
    The \emph{CV--MN solution on the resolved conifold} with parameters $(\alpha',b)$ is the solution $(u_\ast,\phi_\ast)$ of the ODE system of Proposition \ref{prop:Coho1:IIB} with $V=e^{3s}$ and $u_0 = \frac{9}{2}\alpha' s + b$ given by
    \[
    u_\ast = \tfrac{9}{2}\alpha' s + b + \tfrac{3}{2}\alpha', \qquad e^{2\phi_\ast} = \tfrac{1}{\sqrt{3}}\tfrac{9}{2}\alpha'e^{-3s} \sqrt{\tfrac{9}{2}\alpha' s + b +\tfrac{3}{4}\alpha'}.
    \]    
\end{definition}

Note that one has $\phi_\ast\ra -\infty$ as $s\ra\infty$, in contrast to the asymptotic behaviour $\phi\ra\phi_\infty$ for some $\phi_\infty\in\R$ displayed by AC solutions, \ie solutions asymptotic to $(\omega_\tu{C},\Omega_\tu{C},\phi_\infty)$.

\subsubsection{The CV--MN solution on the deformed conifold}

As discovered in the physics literature by Chamseddine--Volkov \cite{Chamseddine:Volkov,Chamseddine:Volokv:II} and Maldacena--Nu\~nez \cite{Maldacena:Nunez}, the ODE system of Proposition \ref{prop:Coho1:IIB} also has an explicit solution when $V$ and $u_0$ are the ones corresponding to the complex structure of the defomed conifold.

\begin{definition}\label{def:CVMN:Deformed}
    The \emph{CV--MN solution on the deformed conifold} with parameters $(\alpha',\kappa)$ is the solution $(u_\ast,\phi_\ast)$ of the ODE system of Proposition \ref{prop:Coho1:IIB} with $u_0=\tfrac{3}{2}\alpha' \frac{3s \cosh{(3s)} -\sinh{(3s)}}{\sinh{(3s)}}$ and $V = \kappa \sinh{3s}$ given by
    \[
    u_\ast = \tfrac{9}{2}\alpha' s , \qquad e^{2\phi_\ast} = \frac{(\tfrac{9}{2}|\alpha'|)^\frac{3}{2}}{3\sqrt{2}\kappa}\frac{ \sqrt{9s^2\sinh^2{(3s)} -(3s \cosh{(3s)}-\sinh{(3s)})^2}}{ \sinh^2{(3s)}}.
    \]    
\end{definition}

Checking that $u_\ast>|u_0|$ for all $s>0$ and that $(u_\ast,\phi_\ast)$ is a solution of the ODE system are explicit computations. We briefly discuss asymptotics of the solution as $s\ra\infty$ and $s\ra 0$.

For $s\ra\infty$ we calculate
\[
u_\ast = \tfrac{9}{2}\alpha'\, s, \qquad e^{2\phi_\ast} \approx \tfrac{2}{\sqrt{3}\kappa}\left( \tfrac{9}{2}\alpha'\right)^{\frac{3}{2}}e^{-3s}\sqrt{s-\tfrac{1}{6}}
\]
up to exponentially decaying terms. Together with the exponential decay (in the variable $s$) of the complex structure of the deformed conifold to the one of the conifold up to a change of variable $s\mapsto s + \frac{1}{3}\log{\frac{\kappa}{2}}$, we therefore see that the CV--MN solution on the deformed conifold has a complete CV--MN end with parameters $(\alpha',1)$ (and $b=-\frac{3}{2}\alpha'(1+\log{\frac{\kappa}{2}})$).

It will follow from the analysis of initial conditions in the next section that the solution determined by $(u_\ast,\phi_\ast)$ extends smoothly across the zero section, \ie $(u_\ast,\phi_\ast)$ in Definition \ref{def:CVMN:Deformed} defines a complete solution of the IIB system on $T^\ast S^3$, see Remark \ref{rmk:SmoothLCVMN:Deformed} below. For now we record the following expansion near $s=0$:
\begin{equation}\label{eq:Dilaton0:CVMN:deformed}
\phi_\ast = \tfrac{3}{4}\log{\tfrac{9}{2}|\alpha'|} - \tfrac{1}{2}\log{3\sqrt{2}\kappa} +O(s^2).
\end{equation}

\section{Local solutions}

In this section we study natural singular initial value problems for the ODE system of Proposition \ref{prop:Coho1:IIB} that arise when studying solutions that extend smoothly over a singular orbit or solutions with a prescribed CV--MN singular end. In either case, the main technical tool is the following existence result for singular initial value problems, \cf \cite[Theorem 7.1]{Malgrange}, \cite[\S 4]
{Ferus:Karcher} and \cite[\S 5]{Eschenburg:Wang}.

\begin{theorem}[{\cf \cite[Theorem 4.7]{Foscolo:Haskins}}]\label{thm:Singular:BVP}
Consider the singular initial value problem
\begin{equation}\label{eq:Singular:IVP}
z'= \frac{1}{s}\, M_{-1}(z) + M(s,z), \qquad z(0)=z_0,
\end{equation}
where $z$ is a function of the independent variable $s$ with values in $\R^k$, $z'$ denotes its derivative, $M_{-1}\co \R^k\ra\R^k$ is a smooth function of $z$ in a neighbourhood of $z_0$ and $M\co \R\times\R^k\ra \R^k$ is smooth in $(s,z)$ in a neighbourhood of $(0,z_0)$. Assume that
\begin{enumerate}
\item $M_{-1}(z_0)=0$;
\item $d_{z_0}M_{-1}-h\,\tu{id}_{\R^k}$ is invertible for all $h\in\N_{\geq 1}$.
\end{enumerate}
Then there exists a unique solution $z(s)$ of \eqref{eq:Singular:IVP}. Furthermore $z$ depends continuously on $z_0$ satisfying (i) and (ii).
\end{theorem}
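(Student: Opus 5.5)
The statement is the classical singular initial value theorem of Malgrange type. I would prove it by reducing it to a regular fixed point problem after removing a formal power series part.

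\emph{Step 1: formal solution.} Write $z(s)=z_0+\sum_{h\ge1} z_h s^h$ and insert this into $s z' = M_{-1}(z)+sM(s,z)$. Order $s^0$ gives $M_{-1}(z_0)=0$, which is hypothesis (i). At order $s^h$ we get
\[
h z_h = d_{z_0}M_{-1}(z_h)+F_h(z_0,\dots,z_{h-1}),
\]
where $F_h$ is a polynomial expression in the lower coefficients and in the Taylor coefficients of $M_{-1}$ and $M$. By hypothesis (ii) the operator $d_{z_0}M_{-1}-h\,\tu{id}$ is invertible, so $z_h$ is uniquely determined for every $h\geq 1$. The coefficients depend continuously on $z_0$, because inverting the operator is continuous on the open set where (ii) holds.

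\emph{Step 2: reduction to a contraction.} Fix $N$ larger than the operator norm of $L:=d_{z_0}M_{-1}$ (plus one). Set $z=p_N(s)+s^N w(s)$, where $p_N$ is the Taylor polynomial of degree $N$ from Step 1. The equation for $w$ then has the form
\[
s w' = (L-N)w + s\,G(s,w),
\]
with $G$ smooth. The term $s\,G$ collects the remainder, which is $O(s)$ because $p_N$ solves the equation to order $s^N$, together with the quadratic Taylor remainder of $M_{-1}$ multiplied by $s^{N}$. Solutions that are bounded as $s\to 0$ are then exactly the fixed points of
\[
w(s)=\int_0^1 \sigma^{N\,\tu{id}-L-\tu{id}}\, G(s\sigma,w(s\sigma))\,s\, d\sigma .
\]
This comes from the variation of constants formula $w(s)=\int_0^s (\tau/s)^{N-L}\tau^{-1}\cdot\tau G\,d\tau$. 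The choice $N>\|L\|$ makes $\sigma^{N-L-1}$ integrable and bounded by $\sigma^{N-\|L\|-1}$.

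\emph{Step 3: fixed point and regularity.} On $C^0([0,\epsilon];\R^k)$ the right-hand side maps a ball to itself and is a contraction once $\epsilon$ is small, because of the extra factor $s$. This gives a unique continuous $w$, and it depends continuously on $z_0$ since the map depends continuously on the parameter. Smoothness of $z$ for $s>0$ follows from standard ODE theory. Smoothness up to $s=0$ follows by differentiating the integral representation, or by increasing $N$ and using uniqueness.

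\emph{Step 4: uniqueness, the main obstacle.} Uniqueness must be understood among solutions that are smooth, or at least $C^1$, at $s=0$. For any such solution, its Taylor expansion is forced by Step 1. Subtracting $p_N$ then produces a bounded $w$, which must be the fixed point found in Step 3. The delicate points are the eigenvalues of $L$ with positive real part that are not integers, since along those directions genuine non-smooth solutions $s^{\lambda}v$ exist. Excluding them is exactly what the smoothness requirement does, and I would check this carefully.
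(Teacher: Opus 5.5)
The paper does not prove this theorem; it quotes it from Foscolo--Haskins, with pointers to Malgrange, Ferus--Karcher and Eschenburg--Wang. Your argument is the standard proof of this Briot--Bouquet type result and is essentially correct. It determines the formal solution via the non-resonance condition, subtracts a Taylor polynomial $p_N$ with $N>\|L\|$, and applies a contraction to the remainder $w$ via variation of constants. Two corrections are needed.

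First, a minor slip. Substituting $\tau=s\sigma$ in your own formula $w(s)=\int_0^s(\tau/s)^{N-L}G(\tau,w(\tau))\,d\tau$ gives the kernel $\sigma^{N-L}$, not $\sigma^{N-L-1}$. This only improves integrability, so the contraction estimates are unaffected.

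Second, and more substantively, the phrase ``or at least $C^1$'' in Step 4 is false. Take $k=1$, $M_{-1}(z)=\tfrac32 z$, $M\equiv 0$ and $z_0=0$. Hypotheses (i) and (ii) hold, since $\tfrac32-h\neq 0$ for $h\in\mathbb{N}_{\geq 1}$. Yet $z=c\,s^{3/2}$ is a $C^1$ solution with $z(0)=0$ for every $c\in\R$. This is exactly the non-integer positive eigenvalue phenomenon you flag.

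What your uniqueness argument actually needs is enough regularity to force $z-p_N=o(s^N)$. For instance, $z\in C^N$ at $s=0$ with $N>\|L\|$, or $z$ smooth. Then matching coefficients up to order $N$ reproduces the coefficients of Step 1, so $w=(z-p_N)/s^N$ is bounded. It must therefore be the fixed point on a short interval, and ODE uniqueness for $s>0$ finishes the argument. ``Unique'' in the statement should be read in this class.

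For context, the subtlety is harmless in the paper's applications. There $d_{z_0}M_{-1}$ has eigenvalues $-3,-2$ (deformed conifold) and $-2,-\tfrac32$ (CV--MN ends). When all eigenvalues have negative real part, uniqueness holds even among solutions merely continuous at $s=0$. To see this, write the difference of two solutions as $\zeta$, which satisfies $s\zeta'=A(s)\zeta$ with $A(s)\to L$. A Lyapunov form $\langle P\zeta,\zeta\rangle$ then grows at least like a negative power of $s$ as $s\to 0$ unless $\zeta\equiv 0$, which contradicts $\zeta\to 0$.
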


The main results of this section are Propositions  \ref{prop:Coho1:IIB:Deformed:IVP} and \ref{prop:Coho1:IIB:Resolved:IVP}. The former establishes the existence, for each fixed choice of parameters $(\alpha',\kappa)$, of a 1-parameter family of smooth solutions to the IIB system defined in a tubular neighbourhood of the zero-section in $T^\ast S^3$. Working with the complex structure of the conifold, topological reasons forbid the existence of smooth non-K\"ahler solutions on either small resolution of the conifold and on $K_{\C\PP^1\times\C\PP^1}$, but Proposition \ref{prop:Coho1:IIB:Resolved:IVP} establishes the existence, for each fixed choice of parameters $(\alpha',b)$, of a 1-parameter families of CV--MN singular ends.

\subsection{Smooth solutions in a neighbourhood of a singular orbit}

In Theorem \ref{thm:ACCY} we have seen that there are four smooth $\sunitary{2}^2$--invariant Calabi--Yau manifolds up to scaling: $T^\ast S^3$, the two small resolutions of the conifold and $K_{\C\PP^1\times\C\PP^1}$. Each of these manifolds contains a dense open set, the set of \emph{principal orbits}, diffeomorphic to $\R\times \sunitary{2}^2/\triangle\unitary{1}$ (up to a double cover in the case of $K_{\C\PP^1\times\C\PP^1}$). The space of principal orbit is partially compactified by adding a smaller orbit for the $\sunitary{2}^2$--action, called the \emph{singular orbit}. The singular orbit in each of the four cases is, respectively,
\[
\sunitary{2}^2/\triangle\sunitary{2}, \qquad \sunitary{2}^2/\unitary{1}\times\sunitary{2}, \qquad \sunitary{2}^2/\sunitary{2}\times\unitary{1}, \qquad \sunitary{2}^2/\unitary{1}^2.
\]
Solutions of the ODE system of Proposition \ref{prop:Coho1:IIB} describe solutions of the IIB system on the set of principal orbits; suitable boundary conditions will then determine whether the $\sunitary{3}$--structure extends to a smooth solution of the IIB system across the relevant singular orbit.

In Lemma \ref{lem:No:Smoothness:Resolutions} we observed that for topological reasons there are no non-K\"ahler smooth $\sunitary{2}^2$--invariant solutions on either of the two small resolutions of the conifold or on $K_{\C\PP^1\times\C\PP^1}$. In other words, $\alpha' \neq 0$ obstructs the existence of smooth solutions of \eqref{eq:IIBweak} in a neighbourhood of the singular orbit of any crepant resolution of the conifold $\tu{C}$ and its quotient $\tu{C}/\mathbb{Z}_2$. In view of this observation, we focus our attention on smooth solutions of the IIB system defined in a tubular neighbourhood of the 0-section in $T^\ast S^3$. In the next result we will therefore work with the complex structure of the deformed conifold, case (ii) of Proposition \ref{prop:Coho1:IIB}. 

\begin{prop}\label{prop:Coho1:IIB:Deformed:IVP}
For each choice of parameters $\kappa>0$ and $\alpha'\in \R$, there exists a $1$-parameter family of solutions to the ODE system of Proposition \ref{prop:Coho1:IIB} with
\[
V = \kappa \sinh{3s}, \qquad u_0=\tfrac{3}{2}\alpha' \frac{3s \cosh{(3s)} -\sinh{(3s)}}{\sinh{(3s)}}
\]
which corresponds to a 1-parameter family of smooth solutions of the IIB system defined in a tubular neighbourhood of the 0-section $\sunitary{2}^2/\triangle\sunitary{2}$ in $T^\ast S^3$. The family can be parametrised by the value of $\phi$ on the singular orbit.
\proof
We will reformulate the smooth extension across the singular orbit as a singular boundary value problem for an ODE system of the form considered in Theorem \ref{thm:Singular:BVP}.

The necessary and sufficient conditions for a $\sunitary{2}^2$--invariant non-degenerate 2-form to extend over the singular orbit $\sunitary{2}^2/\triangle\sunitary{2}$ are described in \cite[Lemma 4.2]{Foscolo:Haskins}. After taking into account the different conventions from that paper, this conditions require that, with respect to the arc-length parameter $t$, $u, u_0\nu_3, u_0\nu_0$ are odd, $\lambda$ is even with $\lambda(0)>0$ and
\[
u_0 (\nu_3+\nu_0)=O(t^3), \qquad u(t)=2\lambda(0) t + O(t^3).
\]
Note the last condition also arises from the equation $\partial_t u = 2\lambda$, while the condition that the derivative of $u_0 (\nu_3+\nu_0)$ vanishes at the origin was already used to determine a constant of integration in the proof of Lemma \ref{lem:BIexplicit}. Since $\lambda(0)\neq 0$, the relation $\lambda\, ds = dt$ implies that we can work with these same smoothness conditions with the parameter $s$ in which the coefficients of the ODE system become explicit. Finally, since the complex structure and holomorphic volume form $\Omega_0$ is already smooth, there are no additional smoothness conditions arising from smoothness of the function $\phi$ since $e^\phi$ is defined implicitly by $\omega$ and $\Omega_0$. More directly, since $t$ is a radial parameter on the fibres of $T^\ast S^3\ra S^3$, it is clear that smoothness of $\phi$ requires $\phi$ to be an even function of $t$ (equivalently, of $s$).

Now, given $\phi_0\in\R$, write
\[
u(s) = s X(s), \qquad \phi = \phi_0 + s^2 Y(s)
\]
for functions $X,Y$. Using
\[
e^{-2\phi}\lambda\mu \approx 3\kappa s, \qquad u_0 \approx \tfrac{9}{2}\alpha' s^2,
\]
for $s\ra 0$ we calculate that the ODE system of Proposition \ref{prop:Coho1:IIB} has the form
\[
s\, \partial_s z = M_{-1}(z) + M(s,z)
\]
for the pair $z=(X,Y)$, where 
\[
M_{-1}(z) = \left( -X + 18\kappa^2 e^{4\phi_0}X^{-2}, -2Y -2\left( \tfrac{9}{2}\alpha'\right)^2 X^{-2}\right)
\]
and $M(s,z)$ is a real analytic function with $M(0,z)=0$.

According to Theorem \ref{thm:Singular:BVP}, in order to understand the existence of local solutions of this system we have to parametrise all vectors $z_0$ such that $M_{-1}(z_0)=0$ and then check the non-resonance condition
\[
\det{\left( d_{z_0}M_{-1}-h\tu{Id}\right)} \neq 0,\qquad \forall h\in \mathbb{N}_{\geq 1}.
\]
A direct calculation implies that the initial condition $z_0=(X_0,Y_0)$ is uniquely given by
\[
X_0 = (18\kappa^2)^{\frac{1}{3}}e^{\frac{4}{3}\phi_0}, \qquad Y_0 = -\left( \tfrac{9}{2}\alpha'\right)^2(18\kappa^2)^{-\frac{2}{3}}e^{-\frac{8}{3}\phi_0}
\]
once the parameters $\alpha',\kappa,\phi_0$ are given. One then calculates that
\[
\det\,\left( d_{z_0}M_{-1}-h\,\tu{id}\right) = (h+3)(h+2)
\]
so that Theorem \ref{thm:Singular:BVP} guarantees the existence of a solution of the ODE system defined for sufficiently small $s$ (depending on $\alpha',\kappa$ and $\phi_0$). Finally, uniqueness of the solution implies that $u$ is in fact an odd function of $s$ and $\phi$ an even function of $s$, implying that the smoothness conditions for extension across the singular orbit are automatically satisfied.
\endproof
\end{prop}

\begin{remark}\label{rmk:Coho1:IIB:Deformed:IVP}
For later use, we record here higher order expansions in terms of the parameter $s$:
\[
u = \left( 18\kappa^2\right)^{\frac{1}{3}} e^{\frac{4}{3}\phi_0} \left( 1 + \tfrac{3}{5}\left( 1- \left( \tfrac{9}{2}\alpha'\right)^2 \left( 18\kappa^2\right)^{-\frac{2}{3}} e^{-\frac{8}{3}\phi_0}\right)s^2 + O(s^4)\right)\, s.  
\]
The consequence we will need is that the quantities $u$, $\partial_s u$ and
\[
\frac{\partial_s u}{u} = \frac{1}{s}+ \tfrac{3}{5}\left( 1- \left( \tfrac{9}{2}\alpha'\right)^2 \left( 18\kappa^2\right)^{-\frac{2}{3}} e^{-\frac{8}{3}\phi_0}\right)s + O(s^3)
\]
calculated at any fixed time $s>0$ sufficiently small are all increasing in $\phi_0$.
\end{remark}

\begin{remark}\label{rmk:SmoothLCVMN:Deformed}
By uniqueness of the solution in Theorem \ref{thm:Singular:BVP} the solution with $\phi_0 = \phi_\ast(0)$ of \eqref{eq:Dilaton0:CVMN:deformed} must coincide with the CV--MN solution of Definition \ref{def:CVMN:Deformed}. As promised, in particular this shows that the latter defines a smooth complete non-K\"ahler solution of the IIB system on $T^\ast S^3$.
\end{remark}

\begin{remark}\label{rmk:mThmAi}
Setting $p=\phi_0$, Proposition \ref{prop:Coho1:IIB:Deformed:IVP} yields a proof of part (i) of Theorem \ref{mthm:IIB:Deformed} in the Introduction, with $p_\ast = \phi_\ast (0)$.
\end{remark}

\subsection{Solutions with CV--MN incomplete behaviour}

By Lemma \ref{lem:No:Smoothness:Resolutions} there are no non-K\"ahler smooth solutions of the cohomogeneity one IIB system on any crepant resolution of the conifold or of its $\Z_2$--quotient. The explicit CV--MN solution of Definition \ref{def:CVMN:Resolved} however provides an interesting incomplete solution. We will now establish the existence of a 1-parameter family of solutions with singular CV--MN ends with parameters $(\alpha',c)$ in the sense of Definition \ref{def:CVMN:Ends}, parametrised by the squashing parameter $c>0$ of the Berger metric on the $S^3$--fibres of the CV--MN end.

Work in the complex structure of the conifold and consider the generic $\sunitary{2}^2$--invariant non-degenerate $2$-form
\[
\omega = \tfrac{1}{2}u' \, ds\wedge\eta^{\tu{se}} + u\, \omega_1^{\tu{se}} + u_0\, \omega^{\tu{se}}_0.
\]
Using \eqref{eq:SE:to:5brane} we rewrite
\[
\omega = \tfrac{1}{3} u' \, ds\wedge\eta - \tfrac{2}{3}(u-u_0)\, \eta_2'\wedge\eta_3' + \tfrac{2}{3}(u+u_0)\, \eta_2\wedge\eta_3.
\]
Fix $c>0$ and define $u=u_c$ where
\begin{equation}\label{eq:CVMN:End:u}
u_c = u_0 + \tfrac{3}{2}c\alpha'.
\end{equation}
Then
\[
\omega = \tfrac{3}{2}\alpha'\, ds\wedge\eta - c\alpha' \, \eta_2'\wedge\eta_3' + \tfrac{4}{3}\left( \tfrac{9}{2}\alpha' s + b + \tfrac{3}{4}c\alpha'\right)\, \eta_2\wedge\eta_3.
\]
After a change of variable $\sqrt{\alpha'}\tau = \frac{1}{3}\left( \frac{9}{2}\alpha' s + b + \frac{3}{4}c\alpha'\right)$ the metric induced by $\omega$ and the complex structure of the conifold is seen to be precisely the model metric $g_\tu{CVMN}(\alpha',c)$ of \eqref{eq:CVMN:end}.

It is convenient to reparametrise by a translation in $s$ in terms of a parameter $\sigma$ such that $\sigma=0$ corresponds to the value $s_c$ of $s$ for which $u_c+u_0=0$: in other words, $\sigma = s+\left(\frac{9}{2}\alpha'\right)^{-1}\left( b+\frac{3}{4}c\alpha'\right)$. Then 
\begin{equation}\label{eq:u0uc}
u_0 = \frac{9}{2}\alpha'\sigma-\frac{3}{4}c\alpha', \qquad u_c = \frac{9}{2}\alpha'\sigma+\frac{3}{4}c\alpha'.
\end{equation}
and we define a function $\phi_c$ implicitly by
\[
\frac{2e^{6s_c}e^{6\sigma}e^{4\phi_c}}{u_c^2-u_0^2}=\tfrac{9}{2}\alpha'.
\]
Note that $u_c^2-u_0^2=\frac{2}{3}c\, \left(\frac{9}{2}\alpha'\right)^2 \sigma$ and $\dot{\phi}_c = -\frac{3}{2}+\frac{1}{4\sigma}$. Then
\[
\dot{u}_c = \frac{2e^{6s_c}e^{6\sigma}e^{4\phi_c}}{u_c^2-u_0^2}, \qquad \dot{\phi}_c + \frac{u_0\dot{u}_0}{u_c^2-u_0^2} = -\frac{3}{2}\left( 1- c^{-1}\right)
\]
so that $(u_c,\phi_c)$ is a solution to the system of Proposition \ref{prop:Coho1:IIB} if and only if $c=1$, \ie $(u_c,\phi_c)=(u_\ast,\phi_\ast)$ is the CV--MN solution on the resolved conifold of Definition \ref{def:CVMN:Resolved}.

\begin{prop}\label{prop:Coho1:IIB:Resolved:IVP}
For each choice of parameters $b\in\R$ and $\alpha'>0$, there exists a $1$-parameter family of solutions to the ODE system of Proposition \ref{prop:Coho1:IIB} with
\[
V=e^{3s}, \qquad u_0 = \tfrac{9}{2}\alpha' s + b,
\]
which corresponds to a 1-parameter family of solutions of the IIB system in the complex structure of the conifold with a singular CV--MN end with parameters $(\alpha',c)$ for some $c>0$. The family can be parametrised by the squashing parameter $c>0$.
\proof
Let $(u_c,\phi_c)$ be the functions defined above. We look for a solution $(u,\phi)$ of the ODE system of the form $u=u_c+\sigma^2 X$, $\phi = \phi_c +\sigma Y$ for functions $X,Y$ defined for small $\sigma>0$, \ie a solution $(u,\phi)$ that has the singular behaviour $(u_c,\phi_c)$ as $\sigma\ra 0$. We calculate
\begin{align*}
\sigma^2 \dot{X} + 2\sigma X &= -\tfrac{9}{2}\alpha' +  \tfrac{9}{2}\alpha' \left( 1+ 4\sigma Y+O(\sigma^2)\right) \left( 1 - \tfrac{1}{2}(\tfrac{9}{2}\alpha')^{-1}\sigma X + O(\sigma^2)\right)\\
&= 4\left( \tfrac{9}{2}\alpha' \right) \sigma Y - \tfrac{1}{2}\sigma X + O(\sigma^2)
\end{align*}
and
\begin{align*}
\sigma \dot{Y} + Y &= \tfrac{3}{2}-\tfrac{1}{4\sigma} -\frac{\tfrac{9}{2}\alpha'\sigma -\tfrac{3}{4}c\alpha'}{3c\alpha'\sigma} \left( 1 - \tfrac{1}{2}(\tfrac{9}{2}\alpha')^{-1}\sigma X + O(\sigma^2)\right)\\
&= \tfrac{3}{2}\left( 1-c^{-1}\right) -\tfrac{1}{8}\left( \tfrac{9}{2}\alpha'\right)^{-1} X + O(\sigma)
\end{align*}
where $O(\sigma^h)$ denotes a real analytic function in $X,Y$ with coefficients that depend real analytically on $\sigma$ and vanish to order $h$ at $\sigma=0$.  We conclude that $z=(X,Y)$ satisfies an ODE system of the form
\[
\sigma \dot{z} = M_{-1}(z) + M_1(\sigma,z), \qquad M_{-1}(z) = \left( -\tfrac{5}{2}X +4\left( \tfrac{9}{2}\alpha'\right) Y, \tfrac{3}{2}\left( 1-c^{-1}\right) -\tfrac{1}{8}\left( \tfrac{9}{2}\alpha'\right)^{-1} X - Y \right),
\]
and with $M_1(\sigma,z)=O(\sigma)$. There is a unique solution $z_0=(X_0,Y_0)$ to $M_{-1}(z_0)=0$, given by
\[
X_0 = 2 \left( \tfrac{9}{2}\alpha'\right)  \left( 1-c^{-1}\right) ,\qquad Y_0= \tfrac{5}{4} \left( 1-c^{-1}\right).
\]
We then compute
\[
d_{z_0}M_{-1} = \left( \begin{array}{cc} -\tfrac{5}{2} & 4\left( \tfrac{9}{2}\alpha'\right)\\ -\tfrac{1}{8}\left( \tfrac{9}{2}\alpha'\right)^{-1} & -1  \end{array}\right), \qquad \det \left( d_{z_0}M_{-1} - h\tu{id}\right) = h^2 + \tfrac{7}{2}h+3.
\]
By Theorem \ref{thm:Singular:BVP} we conclude that for every $c>0$ there exists a unique solution $(u,\phi) = (u_c,\phi_c) + (\sigma^2 X, \sigma Y)$ of the ODE system as claimed.
\endproof
\end{prop}

\begin{remark}\label{rmk:Coho1:IIB:Resolved:IVP}
Setting $q=\log{c}$, the proposition yields a proof of part (i) of Theorem \ref{mthm:IIB:Resolved} in the Introduction. In terms of the notation introduced in the statement of that theorem, via the change of variable $e^s=r$ we calculate $q_\ast=0$, $\log{R_0} = -\frac{2}{9}\frac{b}{\alpha'}$ and $\log{R(q)}=-\frac{1}{6}e^q +\log{R_0}$. In particular, $R(q)\ra 0$ as $q\ra\infty$ and $R(q)\ra R_0$ as $q\ra -\infty$.
\end{remark}

\section{Forward complete solutions}

Our goal now is to study forward completeness of the local solutions on the deformed conifold $T^\ast S^3$ given by Proposition \ref{prop:Coho1:IIB:Deformed:IVP} and of the incomplete solutions on the conifold given in Proposition \ref{prop:Coho1:IIB:Resolved:IVP}. For either choice of complex structure, these propositions establish the existence of 1-parameter families of local solutions with interesting initial conditions; the CV--MN solutions yield a distinguished point in each 1-parameter family. The main results of this section are the complete asymptotically conical behaviour for members of the 1-parameter families lying ``above'' the relevant CV--MN solution, and incompleteness of members of the family lying ``below'' the latter.

\begin{theorem}\label{thm:Coho1:IIB:Deformed}
For each choice of $\alpha'\in\R\setminus\{0\}$ and $\kappa>0$, consider the 1-parameter family of $\sunitary{2}^2$--invariant local solutions of the IIB system \eqref{eq:IIB} on the deformed conifold constructed in Proposition \ref{prop:Coho1:IIB:Deformed:IVP}. The solutions are parametrised by a real parameter $\phi_0\in \R$. Set
\[
\phi_0 (\alpha',\kappa) = \tfrac{3}{4}\log{\tfrac{9}{2}|\alpha'|} - \tfrac{1}{2}\log{3\sqrt{2}\kappa}.
\]
\begin{enumerate}
\item Solutions corresponding to $\phi_0> \phi_0 (\alpha',\kappa)$ are complete and asymptotically conical, \ie for $t\ra \infty$ they approach the solution $(\omega_\tu{C},\Omega_\tu{C},e^{\phi_\infty})$ for some $\phi_\infty\in \R$.
\item For $\phi_0> \phi_0 (\alpha',\kappa)$, the asymptotic value $\phi_\infty$ of the dilaton is increasing in $\phi_0$.
\item The solution $(u_\ast,\phi_\ast)$ corresponding to $\phi_0= \phi_0 (\alpha',\kappa)$ is the CV--MN solution on the defomed conifold of Definition \ref{def:CVMN:Deformed}.
\item Solutions with $\phi_0< \phi_0 (\alpha',\kappa)$ are incomplete.
\end{enumerate}
\end{theorem}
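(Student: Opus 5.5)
Part (iii) is immediate from Remark \ref{rmk:SmoothLCVMN:Deformed} and \eqref{eq:Dilaton0:CVMN:deformed}: the CV--MN solution is smooth at the zero section with $\phi_\ast(0)=\phi_0(\alpha',\kappa)$, and uniqueness in Theorem \ref{thm:Singular:BVP} identifies it with the member of the family with this initial value. By the symmetry $u_0\mapsto -u_0$ we may take $\alpha'>0$. For the remaining parts I will work with the single second-order equation of Remark \ref{rmk:Coho1:IIB},
\[
\partial_s\log(\partial_s u) = \partial_s\log V^2 - \frac{\partial_s(u^2+u_0^2)}{u^2-u_0^2},
\]
on the region $u>|u_0|$, $\partial_s u>0$. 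Completeness means the solution exists for all $s\in(0,\infty)$ while staying in this region, since then $t=\int\lambda\,ds=\int\sqrt{\partial_s u/2}\,ds$ diverges. Incompleteness means $u$ reaches $|u_0|$ at finite $s$, or the solution blows up.

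\textbf{Comparison.} I will prove a comparison principle. By Remark \ref{rmk:Coho1:IIB:Deformed:IVP}, for small $s>0$ the quantities $u$, $\partial_s u$ and $\partial_s u/u$ are strictly increasing in $\phi_0$. So for $\phi_0>\phi_0(\alpha',\kappa)$ we have $u>u_\ast$ and $\partial_s u/u>\partial_s u_\ast/u_\ast$ initially. I will show these inequalities persist. At a first touching time of $\partial_s\log u$ against $\partial_s \log u_\ast$ we still have $u>u_\ast$, and the right-hand side of the equation, written for $\partial_s\log(\partial_s u)$, is increasing in $u$ when $u_0$ is fixed. Indeed
\[
\frac{\partial_s(u^2+u_0^2)}{u^2-u_0^2}
\]
must be handled by splitting into its $u$-dependent and $u_0$-dependent parts, and this is where care is needed. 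This gives $u>u_\ast>|u_0|$ as long as both solutions exist. Blow-up is excluded because $\partial_s u\le C V^2$, which follows since the last term is positive for large $s$. The symmetric argument gives $u<u_\ast$ for $\phi_0<\phi_0(\alpha',\kappa)$.

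\textbf{Asymptotics above CV--MN.} This is the main obstacle. Knowing only $u>u_\ast\sim \tfrac92\alpha' s$ is not enough: I must show $u$ actually grows like $e^{2s}$ (conical), not linearly. The plan is to study the ratio $w=u_\ast^{1+\epsilon}/u$ as outlined in the introduction. First I show $w$ is eventually decreasing, using the ODE and the strict gap created at small $s$. This forces $u$ to grow superlinearly. Once $u\gg u_0$, the term $u_0\partial_s u_0/(u^2-u_0^2)$ is integrable. Hence $\phi$ converges to some $\phi_\infty$, and then $\partial_s u\approx 2V^2e^{4\phi_\infty}/u^2$, giving $u^3\sim \tfrac{3}{2}\kappa^2 e^{4\phi_\infty}e^{6s}\cdot\tfrac{1}{2}$, i.e.\ $u\sim c\,e^{2s}$. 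The exponential closeness of $V,u_0$ to the conifold data (Remark \ref{rmk:Coho1:IIB:Infinity}) then gives convergence to $(\omega_\tu{C},\Omega_\tu{C},\phi_\infty)$ after the variable change $r\propto e^s$.

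\textbf{Monotonicity and incompleteness.} For (ii), I apply the same comparison between two members with $\phi_0<\phi_0'$. The larger one has $u$ larger, and $\phi_\infty=\phi_0-\int_0^\infty u_0\partial_s u_0/(u^2-u_0^2)\,ds$ (as $\phi$ is increasing in $\phi_0$ through $u$) is increasing in $\phi_0$; equivalently, $\phi_\infty$ is read off from the leading coefficient of $u^3e^{-6s}$. For (iv), with $u<u_\ast$ I use the ratio argument in reverse, with $w=u_\ast^{1-\epsilon}/u$ or a similar quantity. The aim is to show that the gap $u_\ast-u$ grows while $u_\ast-u_0=\tfrac32\alpha'+O(e^{-6s})$ stays bounded. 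This forces $u-u_0$ to vanish at finite $s$, or the solution to leave the admissible region, so the solution is incomplete. I expect the delicate part to be making this gap growth quantitative, presumably by linearizing the ODE around $u_\ast$ and exhibiting an unstable direction.
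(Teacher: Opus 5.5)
\textbf{Verdict.} Your plan is essentially the paper's proof, and parts (i)--(iii) go through as you describe. Part (iv) is the one place where you have not yet written an argument, and the tool you suggest there is the wrong one.

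\textbf{Where you match the paper.} The shared steps are: (iii) by uniqueness in Theorem~\ref{thm:Singular:BVP}; a first-touching-time comparison with $u_\ast$, started from the small-$s$ ordering of Remark~\ref{rmk:Coho1:IIB:Deformed:IVP}; existence for all $s$ because $u-u_0\ge u_\ast-u_0$ (Lemma~\ref{lem:IIB:Completeness}); superlinear growth via $u_\ast^{1+\epsilon}/u$; then $\dot\phi\in L^1$ and $u^3\sim\tfrac14\kappa^2e^{4\phi_\infty}e^{6s}$. Your constant there is off by a factor $3$, which is harmless.

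\textbf{Where you differ, harmlessly.} You propagate the pair $u>u_\ast$, $\dot u/u>\dot u_\ast/u_\ast$, rather than the paper's pair $u>u_\ast$, $\dot u>\dot u_\ast$ from Proposition~\ref{prop:IIB:Comparison:CV:MN}. Your pair is exactly the $\delta=0$ step inside Proposition~\ref{prop:Growth:CV:MN}, and it works. The ``care'' you mention is this: at the touching time, freeze $\ell=\dot u/u=\dot u_\ast/u_\ast$, not $\dot u$. Then $\ddot u/\dot u=\partial_s\log V^2-(2\ell u^2+2u_0\dot u_0)/(u^2-u_0^2)$ is strictly increasing in $u$ on $\{u>u_0>0\}$. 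Combined with $\partial_s(\dot u/u)=\ell(\ddot u/\dot u-\ell)$, this gives the contradiction. For (ii), your formula $\phi_\infty=\phi_0-\int_0^\infty u_0\dot u_0/(u^2-u_0^2)\,ds$, together with pointwise ordering of $u$, is a valid alternative to the paper's pointwise bound $\phi_1>\phi_2$. The paper reads that bound off from $e^{4\phi}=(u^2-u_0^2)\dot u/(2V^2)$; your version even gives strict monotonicity.

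\textbf{Part (iv).} Linearising around $u_\ast$ would only see members with $\phi_0$ close to $\phi_0(\alpha',\kappa)$, so it cannot prove (iv) in general. You do not need any quantitative gap estimate: your own comparison already supplies it.
\begin{itemize}
\item In the regime $u<u_\ast$, $\dot u/u<\dot u_\ast/u_\ast$, the ratio $u/u_\ast$ is decreasing. So $u\le c\,u_\ast$ with $c=u(s_0)/u_\ast(s_0)<1$.
\item One has $u_\ast-u_0=\tfrac32\alpha'\bigl(1-3s(\coth 3s-1)\bigr)<\tfrac32\alpha'$. Hence $u-u_0<\tfrac32\alpha'\bigl(1-3(1-c)s\bigr)$, and the solution must leave $\{u>u_0\}$ before $s=1/(3(1-c))$.
\item Since $\dot u<\dot u_\ast=\tfrac92\alpha'$, the arclength $t=\int\sqrt{\dot u/2}\,ds$ stays bounded up to that time. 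This is the incompleteness.
\end{itemize}
The paper instead assumes existence on $[s_0,\infty)$ and applies Proposition~\ref{prop:Growth:CV:MN}(ii) with $\delta<0$ to force sublinear growth. The route above avoids the large-$s$ asymptotics entirely.

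\textbf{Optional shortcut for (i).} The same idea removes the need for the $\epsilon$-ratio. The bound $u\ge c\,u_\ast$ with $c>1$ gives $|\dot\phi|\le \dot u_0/((c^2-1)u_0)$, hence $e^{4\phi}\gtrsim s^{-N}$. Then $\partial_s u^3\ge 6V^2e^{4\phi}\gtrsim e^{6s}s^{-N}$, which is exponential growth. If you keep the ratio instead, the mechanism is as follows. At a zero of $S_\delta$, the leading term of $\dot S_\delta$ is a multiple of $1-R_0^2$. Its coefficient tends to $6$ by Remark~\ref{rmk:Growth:CV:MN}. Because $1-R_0^2$ is bounded away from $0$, this term dominates the $O(\delta/s)$ terms.
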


The analogous result for the conifold is as follows.

\begin{theorem}\label{thm:Coho1:IIB:Resolved}
For each choice of $\alpha'\in\R\setminus\{0\}$ and $b$, consider the 1-parameter family of $\sunitary{2}^2$--invariant local singular solutions of the IIB system \eqref{eq:IIB} on the conifold constructed in Proposition \ref{prop:Coho1:IIB:Resolved:IVP}. The solutions are parametrised by a real parameter $c >0$.
\begin{enumerate}
\item Solutions corresponding to $c> 1$ are forward complete and asymptotically conical, \ie for $t\ra \infty$ they approach the solution $(\omega_\tu{C},\Omega_\tu{C},e^{\phi_\infty})$ for some $\phi_\infty\in \R$.
\item For $c>1$, the asymptotic value $\phi_\infty$ of the dilaton is increasing in $c$.
\item The solution $(u_\ast,\phi_\ast)$ corresponding to $c= 1$ is the CV--MN solution on the resolved conifold of Definition \ref{def:CVMN:Resolved}
\item Solutions with $0<c< 1$ are forward incomplete.
\end{enumerate}
\end{theorem}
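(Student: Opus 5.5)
We work with the single second-order ODE of Remark \ref{rmk:Coho1:IIB}, with $V=e^{3s}$ and $u_0=\tfrac{9}{2}\alpha' s+b$; without loss of generality $\alpha'>0$. The starting points are the singular CV--MN ends of Proposition \ref{prop:Coho1:IIB:Resolved:IVP}, in the variable $\sigma$ centred at $s_c$. Part (iii) follows from the computation preceding that proposition: for $c=1$ we have $X_0=Y_0=0$, and $(u_1,\phi_1)$ is already an exact solution. Hence uniqueness in Theorem \ref{thm:Singular:BVP} identifies the $c=1$ member with $(u_\ast,\phi_\ast)$ of Definition \ref{def:CVMN:Resolved}.

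For the remaining parts the plan is a comparison argument against $u_\ast$. Two solutions whose data differ only in $c$ are both asymptotic to the affine function $u_0$, but with different offsets: $u-u_0\to\tfrac32 c\alpha'$ as $\sigma\to0$, and the initial point $s_c$ also moves with $c$. Near the singular end, Proposition \ref{prop:Coho1:IIB:Resolved:IVP} gives
\[
u=u_c+\sigma^2X,\qquad \phi=\phi_c+\sigma Y,
\]
with $X_0,Y_0$ of the sign of $1-c^{-1}$. From this I would check that, on a common small interval, $c>1$ forces $u>u_\ast$ and $\partial_s u/u>\partial_s u_\ast/u_\ast$, while $c<1$ forces both inequalities to be reversed.

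Next I would propagate these inequalities forward. Write the equation as
\[
\partial_s\log\partial_s u=6-\partial_s\log(u^2-u_0^2)+\frac{2u_0\partial_su_0}{u^2-u_0^2}.
\]
Suppose $u$ and $u_\ast$ touch at a first point where $u=u_\ast$ or $\partial_su=\partial_su_\ast$. At that point the right-hand side is monotone in $u$, since $u^2-u_0^2$ becomes larger for $u$ larger. This monotonicity should give a contradiction, so the ordering persists: if $u>u_\ast$ initially, then $u-u_\ast$ and its derivative stay positive. In particular $u^2-u_0^2\ge u_\ast^2-u_0^2>0$, so the positivity constraints \eqref{eq:ODEpositive} cannot degenerate and the solution exists for all $s$.

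For completeness and the asymptotically conical behaviour in (i), I would study the ratio $w=u_\ast^{1+\epsilon}/u$, as announced in the plan of the paper. The aim is to show that $u$ eventually grows like $e^{2s}$, which is the conical rate $r^2$. Then $\partial_s\phi=-u_0\partial_su_0/(u^2-u_0^2)$ is integrable, so $\phi\to\phi_\infty$. Moreover $\partial_su\approx 2e^{6s+4\phi_\infty}/u^2$ integrates to $u\sim \mathrm{const}\cdot e^{2s}$, which yields the cone solution $(\omega_{\tu C},\Omega_{\tu C},\phi_\infty)$ and forward completeness ($dt=\lambda\,ds$ with $\lambda\sim e^{s}$). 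The key step is to exclude the alternative that $u$ stays of linear growth, like $u_\ast$, with $e^{\phi}\to0$. For this I would show that $w$ attains a maximum and then decreases, using the strict inequality $\partial_su/u>\partial_su_\ast/u_\ast$ together with an ODE inequality for $\log w$. This dichotomy is the main obstacle.

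For (iv), with $c<1$, the same comparison gives $u<u_\ast$. I would show that $u-u_0$ reaches $0$ in finite $s$, or that $\partial_su$ blows up, so that the metric degenerates at finite distance. Finally, (ii) follows from the monotone dependence on $c$ of the initial data near $\sigma=0$, propagated by the same comparison principle between two members of the family with $c_1<c_2$. This gives $\phi^{(1)}\le\phi^{(2)}$ at large $s$, hence $\phi_\infty$ is increasing in $c$.
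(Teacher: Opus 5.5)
Part (iii) is fine, and your overall plan (order $u$ against $u_\ast$, then control $u_\ast^{1+\epsilon}/u$) is the paper's. However, the local inequality you intend to ``check'' near the singular end is false, and your growth argument rests on it. At $\sigma=0$, \eqref{eq:u0uc} gives $u=\tfrac34c\alpha'$ and $\partial_su=\tfrac92\alpha'$, while $u_\ast=u_0+\tfrac32\alpha'=\tfrac34(2-c)\alpha'$ and $\partial_su_\ast=\tfrac92\alpha'$. So for $c<1$ you get $\partial_su/u=6/c>6/(2-c)=\partial_su_\ast/u_\ast$, the \emph{opposite} of your claim. For $c>1$ there is no common interval near $\sigma=0$ at all, since $u_\ast$ solves the system only for $s>s_1>s_c$. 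At $s_1$ one has $\partial_su_\ast/u_\ast=6$, whereas for $c$ close to $1$ the expansion gives $\partial_su/u\approx 6/(2c-1)<6$.

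This is why the paper cannot argue as in the deformed case, where Remark \ref{rmk:Coho1:IIB:Deformed:IVP} does give all three inequalities at small $s$. Instead it proves Proposition \ref{prop:Growth:CV:MN:I}. Take a solution on $[s_0,\infty)$ with $u>u_\ast$ and $\partial_su>\partial_su_\ast$, and suppose $\partial_su/u\le\partial_su_\ast/u_\ast$ held forever. Then $u$ grows at most linearly. Combined with $u\ge u_\ast+\tfrac32\alpha'\theta$, this gives $(\log\partial_su)'\ge\tfrac{6\theta}{1+\theta}-\epsilon>0$ for large $s$, i.e.\ exponential growth, a contradiction. Only after this does Proposition \ref{prop:Growth:CV:MN} apply. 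You leave this ``dichotomy'' open and propose to close it with the false inequality. The same missing argument, with inequalities reversed, is what proves (iv): global existence would force $u\lesssim u_\ast^{1+\delta}$ with $\delta<0$, contradicting $u>u_0$. You give no argument for (iv).

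Your comparison step also fails as stated. The correct identity is $\partial_s\log\partial_su=6-\partial_s\log(u^2-u_0^2)-\frac{4u_0\partial_su_0}{u^2-u_0^2}$. At a first point where $\partial_su_1=\partial_su_2$ with $u_1>u_2$, the difference of right-hand sides contains a term proportional to $u_0\partial_su_0$. That term has the favourable sign only where $u_0>0$, but these solutions are born where $u_0<0$ (and $u_0(s_1)<0$ too).

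The paper gets around this by comparing $u$ with $u_c$, not $u_\ast$, on the whole existence interval (Lemma \ref{lem:Coho1:IIB:Resolved:IVP}). At a point where $\partial_su=\partial_su_c=\partial_su_0$, the right-hand side collapses to $6-2\partial_su_0/(u-u_0)$, which compares with $6(1-c^{-1})$ regardless of the sign of $u_0$. The ordering relative to $u_\ast$ then follows from $u_c-u_\ast=\tfrac32\alpha'(c-1)$ and $\partial_su_c=\partial_su_\ast$. Global existence for $c>1$ then follows from Lemmas \ref{lem:IIB:Completeness:Conifold} and \ref{lem:IIB:Completeness}.

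The same lemma supplies $\partial_su_i>\partial_su_0$, which the rearranged comparison of Proposition \ref{prop:IIB:Comparison:CV:MN:Resolved} needs in (ii). Also, the two members start at different times $s_{c_1}\neq s_{c_2}$, so the comparison must be initialised at the later one, as the paper does; ``monotone dependence of the initial data near $\sigma=0$'' does not address this.
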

Together with the local existence results of Propositions \ref{prop:Coho1:IIB:Deformed:IVP} and \ref{prop:Coho1:IIB:Resolved:IVP}, these two theorems recover Theorems \ref{mthm:IIB:Deformed} and \ref{mthm:IIB:Resolved} in the Introduction.

The proof, essentially the same for both theorems, is the analytic heart of this paper. The crucial role in our analysis is played by comparison with the explicit CV--MN solutions. There are two main steps of our analysis.
\begin{enumerate}
\item We first consider forward long-time existence of solutions of the ODE system. We find that the necessary and sufficient condition for existence forward in time is $|u-u_0|>0$. A comparison argument with the CV--MN solution $u_\ast$ guarantees long time existence of solutions lying ``above'' $u_\ast$.
\item Secondly, motivated by the fact that AC asymptotics can be deduced as soon as $u\gg u_0\approx u_\ast$ for $s\ra\infty$, we study the growth behaviour of $u$ by studying ratios $\frac{u}{u_\ast ^{1+\delta}}$ for small $\delta$. It turns out that good control of these ratios can be achieved if $\log{u}-\log{u_\ast}$ has good monotonicity properties, so part of our analysis requires proving this monotonicity must eventually hold for solutions that exists for all forward times lying ``above'' $u_\ast$. For solutions lying ``below'' $u_\ast$ the opposite monotonicity eventually forces the solution to blow up in finite time.
\end{enumerate}

\subsection{Comparison results}

We begin with some general analytic results about solutions $(u,\phi)$ of the ODE system of Proposition \ref{prop:Coho1:IIB} in either of the two possible complex structures. In this subsection we consider criteria for the maximal existence time of solutions and a simple but crucial comparison result for solutions of the system. In the next subsection we will prove a more technical growth estimate for solutions that are defined on an unbounded interval $[s_0,\infty)$.

Without any loss of generality, we work under the \textbf{standing assumption} that $\alpha'>0$. In particular, $\dot{u}_0>0$. By direct inspection, one also checks that in the complex structure of $T^\ast S^3$, \ie case (ii) in Proposition \ref{prop:Coho1:IIB}, we then have $u_0>0$ for all $s>0$. In case (i) in Proposition \ref{prop:Coho1:IIB} this is not the case and we would like to restrict to the set $\{ s\geq s_\ast\}$ where $u_0\geq 0$. Here $s_\ast$ is defined by $ \tfrac{9}{2}\alpha' s_\ast+b=0$. Hence as a first step of our analysis we prove that solutions must exists at least until time $s_\ast$.

\begin{lemma}\label{lem:IIB:Completeness:Conifold}
Any solution $(u,\phi)$ of the ODE system in case (i) of Proposition \ref{prop:Coho1:IIB} starting at some time $s_0<s_\ast$ exists forward in time with $u,\dot{u}, u^2-u_0^2>0$ up and including time $s_\ast$.
\proof
Let $(u,\phi)$ be defined (with $u,\dot{u}, u^2-u_0^2>0$) on some interval $[s_0,s]$ with $s_0<s\leq s_\ast$. We will show that we have a priori estimates independent of $s$.

Firstly, since $u_0\dot{u}_0\leq 0$ for $s\leq s_\ast$ we have $\dot{\phi}\geq 0$. Hence $e^{4\phi}$ is increasing and therefore bounded below. Note that we also have $\partial_s(u^2-u_0^2)> 0$ in this range (since $u\dot{u}>0$) and therefore $u^2-u_0^2$ is also bounded below. The latter point immediately implies that $u$ is bounded below. On the other hand, the lower bound on $u^2-u_0^2$ implies that the positive function $\dot{\phi}$ is bounded above and therefore so is $e^{4\phi}$ by integration. Taking into account that $(e^{-2\phi}\lambda\mu)^2=e^{6s}$ is also bounded, we deduce that $\dot{u}$ is bounded above and therefore so is $u$.
\endproof
\end{lemma}

We then make the additional \textbf{standing assumption} that \textbf{any solution $(u,\phi)$ is defined on an interval where $u_0>0$}.

\begin{lemma}\label{lem:IIB:Completeness}
Solutions $(u,\phi)$ of the ODE system of Proposition \ref{prop:Coho1:IIB} defined on a subset of $\{ u_0>0\}$ exist as long as $u-u_0$ stays bounded away from $0$.
\proof
Suppose that a solution exists on an interval $[s_0,s_1)$ and $u-u_0 \geq \theta>0$ for all $s\in [s_0,s_1)$. Then $u^2-u_0^2 \geq \theta (2u_0+\theta)$. Observe that $\phi$ is decreasing since $u_0\dot{u}_0>0$ by our standing assumption. Then
\[
|\dot{u}| \leq \frac{2(e^{-2\phi}\lambda\mu)^2 e^{4\phi_0}}{\theta (2u_0+\theta)}, \qquad |\dot{\phi}|\leq \frac{u_0\dot{u}_0}{\theta (2u_0+\theta)}
\]
are uniformly bounded in $[s_0,s_1)$ and $(u,\phi)$ can be extended up to and including time $s_1$.
\endproof
\end{lemma}

The main result of this subsection is the following comparison result.

\begin{prop}\label{prop:IIB:Comparison:CV:MN}
Let $(u_1,\phi_1)$ and $(u_2,\phi_2)$ be two solutions of the ODE stystem of Proposition \ref{prop:Coho1:IIB} defined on a subset of $\{ u_0>0\}$ with $\dot{u}_i>0,u_i>u_0$. Suppose that there exists $s_0$ such that
\begin{equation}\label{eq:IIB:CV-MN:Comparison}
u_1>u_2, \qquad \dot{u}_1>\dot{u_2}.
\end{equation}
Then the inequalities \eqref{eq:IIB:CV-MN:Comparison} persist for all $s\geq s_0$ such that both solutions exist. Moreover, $\phi_1>\phi_2$ in the same interval.
\proof
The second inequality of \eqref{eq:IIB:CV-MN:Comparison} is the first one that could possibly fail. At a time $s_\ast>s_0$ such that $u_1>u_2$ and $\dot{u}_1=\dot{u}_2$ we have, using the formulation of the ODE system as the second order ODE of Remark \ref{rmk:Coho1:IIB},
\[
\ddot{u}_1-\ddot{u}_2 = 2\dot{u}_1^2 \left( \frac{u_2}{u_2^2-u_0^2} - \frac{u_1}{u_1^2-u_0^2}\right) + 2u_0 \dot{u}_0\dot{u}_1  \left( \frac{1}{u_2^2-u_0^2} - \frac{1}{u_1^2-u_0^2}\right).
\]
Since the functions $x\mapsto \frac{x}{x^2-u_0^2}$ and $x\mapsto \frac{1}{x^2-u_0^2}$ are both decreasing in $x$ and $u_0\dot{u}_0>0$ by our standing assumptions, we conclude that $\ddot{u}_1-\ddot{u}_2>0$ at $s_\ast$, yielding a contradiction.

In order to obtain the final statement, simply note that
\[
e^{4\phi_1} = \frac{(u_1^2-u_0^2)\dot{u}_1}{2\left( e^{-2\phi}\lambda\mu\right)^2} > \frac{(u_2^2-u_0^2)\dot{u}_2}{2\left( e^{-2\phi}\lambda\mu\right)^2}=e^{4\phi_2}
\]
by the first equation in Proposition \ref{prop:Coho1:IIB}.
\endproof
\end{prop}

\subsubsection{Comparison results for the solutions of Proposition \ref{prop:Coho1:IIB:Resolved:IVP}}

We have explained how even in case (i) of Proposition \ref{prop:Coho1:IIB} we can assume without loss of generality that our solutions $u$ are defined on an interval where $u_0>0$. However, despite Lemma \ref{lem:IIB:Completeness:Conifold} guarantees that the solutions constructed in Proposition \ref{prop:Coho1:IIB:Resolved:IVP} extend until $u_0>0$, Proposition \ref{prop:Coho1:IIB:Resolved:IVP} only gives us information about the solutions near the initial condition, which is always in the region where $u_0<0$. For this reason we need some refined comparison results for this family.

Firts of all, we compare a solution $(u,\phi)$ from Proposition \ref{prop:Coho1:IIB:Resolved:IVP} with its leading order terms $(u_c,\phi_c)$ defined in \eqref{eq:CVMN:End:u}.

\begin{lemma}\label{lem:Coho1:IIB:Resolved:IVP}
Let $(u,\phi)$ be the solution of the system of case (i) of Proposition \ref{prop:Coho1:IIB} constructed in Proposition \ref{prop:Coho1:IIB:Resolved:IVP} with parameter $c>0$.
\begin{enumerate}
\item If $c>1$ then
\[
u>u_c, \qquad \dot{u}>\dot{u}_c
\]
as long as the solution is defined.
\item If $c<1$ then
\[
u<u_c, \qquad \dot{u}<\dot{u}_c
\]
as long as the solution is defined.
\end{enumerate}
\proof
Recall that, in the complex structure of the conifold,
\[
\frac{\ddot{u}}{\dot{u}} = 6 - \frac{2u\dot{u}+2 u_0\dot{u}_0}{u^2-u_0^2}.
\]
The inequalities in the statement of the Lemma are initially satisfied because in Proposition \ref{prop:Coho1:IIB:Resolved:IVP} we constructed $u$ with $u=u_c + \sigma^2 X_0 + O(\sigma^3)$ with the sign of $X_0$ the same as the sign of $c-1$. Here recall that $\sigma$ is simply a translate of $s$.

In either case (i) or (ii), in each pair of inequalities the first one that could possibly fail is the second one. However, at a time where $\dot{u}=\dot{u}_c=\dot{u}_0$ and $u-u_c$ has the same sign as $c-1$ we find
\[
\frac{\ddot{u}}{\dot{u}} = 6 - \frac{2\dot{u}_0}{u-u_0} > 6 - \frac{2\dot{u}_0}{u_c-u_0} = 6(1- c^{-1})>0
\]
if $c>1$ or
\[
\frac{\ddot{u}}{\dot{u}} = 6 - \frac{2\dot{u}_0}{u-u_0} < 6 - \frac{2\dot{u}_0}{u_c-u_0} = 6(1-c^{-1})<0
\]
if $c<1$. In either case, this leads to a contradiction since $\ddot{u}=\partial_s(\dot{u}-\dot{u}_c)$.
\endproof
\end{lemma}

In particular, for solutions corresponding to $c>1$ we have that $u>u_0$ and $\dot{u}>\dot{u}_0$ in the whole maximal existence interval (since $u_c>u_0$ and $\dot{u}_0=\dot{u}_c$). In this case, a different rearrangement of the second derivatives allows us to extend the comparison result of Proposition \ref{prop:IIB:Comparison:CV:MN} also when $u_0$ is not assumed to be positive.

\begin{prop}\label{prop:IIB:Comparison:CV:MN:Resolved}
Let $(u_1,\phi_1)$ and $(u_2,\phi_2)$ be two solutions of the ODE stystem of Proposition \ref{prop:Coho1:IIB} with $\dot{u}_i>0,u_i>|u_0|$ and $\dot{u}_i>\dot{u}_0$. Suppose that there exists $s_0$ such that
\begin{equation}\label{eq:IIB:Smoothing:Comparison}
u_1>u_2, \qquad \dot{u}_1>\dot{u}_2.
\end{equation}
Then the inequalities \eqref{eq:IIB:Smoothing:Comparison} persist for all $s\geq s_0$ such that both solutions exist. Moreover, $\phi_1>\phi_2$ in the same interval.
\proof
We argue as in Proposition \ref{prop:IIB:Comparison:CV:MN} but first rewrite
\[
\frac{\ddot{u}}{\dot{u}} = \partial_s \log{V^2}
- \frac{2u\dot{u}+2 u_0\dot{u}_0}{u^2-u_0^2} = \partial_s \log{V^2} - 2 \frac{u(\dot{u}-\dot{u}_0) + \dot{u}_0 (u+u_0)}{u^2-u_0^2}
\]
so that, at a time where $\dot{u}_1=\dot{u}_2$ we have
\[
\ddot{u}_1-\ddot{u}_2 = 2\dot{u}_1 (\dot{u}_1-\dot{u}_0) \left( \frac{u_2}{u_2^2-u_0^2} - \frac{u_1}{u_1^2-u_0^2}\right) + 2\dot{u}_0 \dot{u}_1  \left( \frac{1}{u_2-u_0} - \frac{1}{u_1-u_0}\right).
\]
Since $\dot{u}_1 (\dot{u}_1-\dot{u}_0)>0$ by assumption and $\dot{u}_0>0$ since $\alpha'>0$, we conclude as in Proposition \ref{prop:IIB:Comparison:CV:MN}.
\endproof
\end{prop}

\subsection{Asymptotic behaviour}

We continue to work in the general setting of Proposition \ref{prop:Coho1:IIB} for either complex structure. We let $u_\ast$ be the explicit solution given by Definitions \ref{def:CVMN:Resolved} and \ref{def:CVMN:Deformed} for the complex structure in case (i) and, respectively, (ii) of Proposition \ref{prop:Coho1:IIB}. We continue to assume without loss of generality that $\alpha'>0$ and $u_0>0$.

\begin{remark}\label{rmk:Growth:CV:MN}
We make the following explicit observations about the asymptotic behaviour of $u_\ast$ and $u_0$, which follow by direct inspection. The function $u_\ast$ is an affine function with $\dot{u}_\ast = \tfrac{9}{2}\alpha'$ and $u_0\approx u_\ast - \tfrac{3}{2}\alpha'$ (up to exponentially decaying terms). Thus we calculate
\[
\frac{2u_0}{u_\ast^2-u_0^2}\left( \dot{u}_0 + u_0 \frac{\dot{u}_\ast}{u_\ast}\right) \approx 6  + O(s^{-1})
\]
as $s\ra\infty$. This will be used in the following discussion.
\end{remark}

The following proposition establishes growth behaviour, measured in terms of the explicit solution $u_\ast$, for solutions of the second order ODE of Remark \ref{rmk:Coho1:IIB} defined for all time $[s_0,\infty)$ in either choice of complex structure, case (i) and case (ii) in Proposition \ref{prop:Coho1:IIB}.  

\begin{prop}\label{prop:Growth:CV:MN}
Let $u$ be a solution of the second order ODE of Remark \ref{rmk:Coho1:IIB} with $\alpha'>0$ and $\dot{u},u-u_0>0$ for all $s\in [s_0,\infty)$.
\begin{enumerate}
\item If $u>u_\ast$ for all $s\in [s_0,\infty)$ and at time $s_0$ we have 
\[
\frac{\dot{u}}{u} > \frac{\dot{u}_\ast}{u_\ast}
\]
then 
\[
R_\delta = \frac{u_\ast^{1+\delta}}{u}
\]
is a decreasing function for all $\delta$ sufficiently small.
\item If $u<u_\ast$ for all $s\in [s_0,\infty)$ and at time $s_0$ we have 
\[
\frac{\dot{u}}{u} < \frac{\dot{u}_\ast}{u_\ast}
\]
then 
\[
R_\delta = \frac{u_\ast^{1+\delta}}{u}
\]
is an increasing function for all $\delta$ sufficiently small.
\end{enumerate}
\begin{proof}
We calculate
\[
S_\delta:= R_\delta^{-1}\dot{R}_\delta = (1+\delta) \frac{\dot{u}_\ast}{u_\ast} - \frac{\dot{u}}{u}.
\]
Our assumptions say that either
\begin{enumerate}
\item $R_0\in (0,1)$ for all $s$ and $S_0<0$ at time $s_0$, or
\item $R_0>1$ for all $s$ and $S_0>0$ at time $s_0$.
\end{enumerate}

At any point where $S_\delta=0$ we calculate
\begin{align*}
(1+\delta)^{-1}\frac{u_\ast}{\dot{u}_\ast}\dot{S}_\delta|_{S_\delta=0} &= \frac{\ddot{u}_\ast}{\dot{u}_\ast} - \frac{\ddot{u}}{\dot{u}} + \delta\, \frac{\dot{u}_\ast}{u_\ast}\\
& = 2u_0 \dot{u}_0 \left( \frac{1}{u^2-u_0^2}-\frac{1}{u_\ast^2-u_0^2}\right) + 2\frac{\dot{u}_\ast}{u_\ast} \left( \frac{u^2}{u^2-u_0^2}-\frac{u_\ast^2}{u_\ast^2-u_0^2}\right) + \delta\, \frac{\dot{u}_\ast}{u_\ast} \left(\frac{2u^2}{u^2-u_0^2}+1\right)\\
&= -\frac{2u_0 (u^2-u_\ast^2)}{(u^2-u_0^2)(u_\ast^2-u_0^2)}\left( \dot{u}_0 + 2u_0 \frac{\dot{u}_\ast}{u_\ast}\right) + \delta\, \frac{\dot{u}_\ast}{u_\ast} \left(\frac{2u^2}{u^2-u_0^2}+1\right)\\
& = -\frac{2u_0 }{(u_\ast^2-u_0^2)}\left( \dot{u}_0 + u_0 \frac{\dot{u}_\ast}{u_\ast}\right) \frac{1-R_0^2}{1-\left(\frac{u_0}{u}\right)^2} + 2\delta\,  \frac{\dot{u}_\ast}{u_\ast} \frac{1}{1-\left(\frac{u_0}{u}\right)^2}+ \delta\,  \frac{\dot{u}_\ast}{u_\ast}.
\end{align*}
Observe that
\[
\frac{2u_0 }{(u_\ast^2-u_0^2)}\left( \dot{u}_0 + u_0 \frac{\dot{u}_\ast}{u_\ast}\right)>0, \qquad 1-\left(\frac{u_0}{u}\right)^2>0
\]
for all $s\in [s_0,\infty)$, while for large $s$ by Remark \ref{rmk:Growth:CV:MN} we have
\[
\frac{2u_0 }{(u_\ast^2-u_0^2)}\left( \dot{u}_0 + u_0 \frac{\dot{u}_\ast}{u_\ast}\right) \approx 6 + O(s^{-1}), \qquad \frac{\dot{u}_\ast}{u_\ast}  = O(s^{-1}).
\]

Now, first consider the case $\delta=0$.
\begin{enumerate}
\item If $R_0<1$ for all $s$, we have $\dot{S}_0<0$ at any time where $S_0=0$. Thus $S_0<0$ for all $s\in [s_0,\infty)$ and in particular $R_0$ is strictly decreasing.
\item If $R_0>1$ for all $s$, we have $\dot{S}_0>0$ at any time where $S_0=0$. Thus $S_0>0$ for all $s\in [s_0,\infty)$ and in particular $R_0$ is strictly increasing.
\end{enumerate}
Fix $s_\ast$ sufficiently large so that the asymptotic expansions of Remark \ref{rmk:Growth:CV:MN} are valid for all $s\geq s_\ast$. For $\delta$ sufficiently small the sign of $S_\delta$ on $[s_0,s_\ast]$ is the same as the sign of $S_0$. Moreover if at a later time $s>s_\ast$ we have $S_\delta =0$ then
\[
(1+\delta)^{-1} \frac{u_\ast}{\dot{u}_\ast}\dot{S}_\delta \approx \left( -6(1-R_0^2) + O(s^{-1})\right) \frac{1}{1-\left( \frac{u_0}{u}\right)^2} + O(s^{-1}).
\]
Thus the sign of $\dot{S}_\delta$ is the same as the sign of $R_0-1$. We conclude that the sign of $S_\delta$ is the same as the sign of $S_0$ for all $s\geq s_0$, \ie $R_\delta$ has the same monotonicity of $R_0$. 
\end{proof}
\end{prop}

By Remark \ref{rmk:Coho1:IIB:Deformed:IVP} the additional assumptions on the sign of $\frac{\dot{u}}{u}-\frac{\dot{u}_\ast}{u_\ast}$ at some time $s_0$ in Proposition \ref{prop:Growth:CV:MN} are satisfied for small enough $s>0$ for all members of the family given by Proposition \ref{prop:Coho1:IIB:Deformed:IVP}, where the sign of $\phi_0-\phi_0(\alpha',\kappa)$ determines which inequality is satisfied. In order to establish the same result for the family of solutions produced by Proposition \ref{prop:Coho1:IIB:Resolved:IVP} we need to work harder. In fact, it suffices to prove that the additional assumptions on the sign of $\frac{\dot{u}}{u}-\frac{\dot{u}_\ast}{u_\ast}$ at some time $s_0$ in Proposition \ref{prop:Growth:CV:MN} are eventually satisfied  for \emph{any} solution that exists for all time.

\begin{prop}\label{prop:Growth:CV:MN:I}
Let $u$ be a solution of the second order ODE of Remark \ref{rmk:Coho1:IIB} with $\alpha'>0$ and $\dot{u},u-u_0>0$ for all $s\in [s_0,\infty)$.
\begin{enumerate}
\item If $u>u_\ast$ and $\dot{u}>\dot{u}_\ast$ for all $s\in [s_0,\infty)$ then there exists $s_1\geq s_0$ such that at time $s=s_1$
\[
\frac{\dot{u}}{u} > \frac{\dot{u}_\ast}{u_\ast}.
\]
\item If $u<u_\ast$ and $\dot{u}<\dot{u}_\ast$ for all $s\in [s_0,\infty)$ then there exists $s_1\geq s_0$ such that at time $s=s_1$
\[
\frac{\dot{u}}{u} < \frac{\dot{u}_\ast}{u_\ast}.
\]
\end{enumerate}
\proof
Consider case (i). By contradiction, assume that
\[
\frac{\dot{u}}{u} \leq \frac{\dot{u}_\ast}{u_\ast}
\]
for all $s\in [s_0,\infty)$. In other words $\left(\log{u}\right)' \leq \left(\log{u_\ast}\right)'$ and by integration we conclude that $u$ has at most linear growth. We are going to deduce a contradiction by showing that $\left(\log{\dot{u}}\right)'\geq \kappa>0$, which implies by integration that $\dot{u}$ and $u$ are at least exponentially growing.

Using the second order ODE of Remark \ref{rmk:Coho1:IIB}, we calculate
\[
\left(\log{\dot{u}}\right)' = \frac{\ddot{u}}{\dot{u}} = \partial_s \log{\left( e^{-2\phi}\lambda\mu\right)^2} - 2\frac{u\dot{u}+u_0\dot{u_0}}{u^2-u_0^2}.
\]
We study the second term. Under our assumptions, we have $\frac{\dot{u}}{u}<\frac{\dot{u}_\ast}{u_\ast}$ and $u>u_\ast +\tfrac{3}{2}\alpha'\theta$ for $\tfrac{3}{2}\alpha'\theta = u(s_0)-u_\ast(s_0)>0$. Thus, using $\dot{u}_0=\dot{u}_\ast$ and writing $u=(u^2-u_0^2)u^{-1}+u_0^2 u^{-1}$ in the numerator,
\begin{align*}
\frac{u\dot{u} + u_0\dot{u_0}}{u^2-u_0^2}  &= \left( 1 + \frac{u_0^2 }{u^2-u_0^2} \right)\frac{\dot{u}}{u} + \frac{u_0u_\ast}{u^2-u_0^2}\frac{\dot{u}_\ast}{u_\ast} \leq \left( 1+\frac{u_0 (u_0+u_\ast)}{u^2-u_0^2}\right)\frac{\dot{u}_\ast}{u_\ast}\\
&<  \left( 1+\frac{u_0 (u_0+u_\ast)}{(u_\ast+u_0+\tfrac{3}{2}\alpha'\theta)(u_\ast -u_0 + \tfrac{3}{2}\alpha'\theta)}\right)\frac{\dot{u}_\ast}{u_\ast} \stackrel{s\ra\infty} {\longrightarrow} \frac{3}{1+\theta}.
\end{align*}
Using that $\partial_s \log{\left( e^{-2\phi}\lambda\mu\right)^2}\approx 6$ for $s\ra\infty$ we conclude that for large enough $s$
\[
\left(\log{\dot{u}}\right)' \geq \tfrac{6\theta}{1+\theta} -\epsilon>0.
\]

The proof in case (ii) is analogous. By contradiction, assume that
\[
\frac{\dot{u}}{u} \geq \frac{\dot{u}_\ast}{u_\ast}
\]
for all $s\in [s_0,\infty)$ so that $u$ has at least linear growth. We obtain a contradiction by showing that $\left(\log{\dot{u}}\right)'\leq -\kappa<0$, which implies by integration that $\dot{u}$ is exponentially decaying and therefore $u$ is bounded. The assumptions now imply that $u<u_\ast -\tfrac{3}{2}\alpha'\theta$ for some $\theta\in (0,1)$ (since $u_0<u<u_\ast$ and $u-u_\ast$ is decreasing). All the inequalities in the previous estimates for $\left(\log{\dot{u}}\right)'$ are reversed, concluding that for large enough $s$
\[
\left(\log{\dot{u}}\right)' \leq -\tfrac{6\theta}{1-\theta} +\epsilon <0. \qedhere
\]
\endproof
\end{prop}

\subsection{Proof of Theorems \ref{thm:Coho1:IIB:Deformed} and \ref{thm:Coho1:IIB:Resolved}}

We are now ready to explain the proof of the main results of this section, Theorems \ref{thm:Coho1:IIB:Deformed} and \ref{thm:Coho1:IIB:Resolved}.

We begin with Theorem \ref{thm:Coho1:IIB:Deformed} and therefore consider the one parameter family of solutions of the ODE system of case (ii) of Proposition \ref{prop:Coho1:IIB} given by Proposition \ref{prop:Coho1:IIB:Deformed:IVP}. The solutions are parametrised by $\phi_0$ and by Remark \ref{rmk:Coho1:IIB:Deformed:IVP} we know that at some time $s_0>0$ sufficiently small
\begin{enumerate}
\item if $\phi_0 > \phi_0 (\alpha',\kappa)$ then the solution $u$ satisfies the inequalities
\[
u>u_\ast, \qquad \dot{u}>\dot{u}_\ast, \qquad \frac{\dot{u}}{u}>\frac{\dot{u}_\ast}{u_\ast};
\]
\item if $\phi_0 < \phi_0 (\alpha',\kappa)$ then the solution $u$ satisfies the inequalities
\[
u<u_\ast, \qquad \dot{u}<\dot{u}_\ast, \qquad \frac{\dot{u}}{u}<\frac{\dot{u}_\ast}{u_\ast}.
\]
\end{enumerate}
Proposition \ref{prop:IIB:Comparison:CV:MN} implies that the first two inequalities in each case are preserved forward in time as long as the solution $u$ exists.

In particular, when $\phi_0>\phi_0(\alpha',\kappa)$ we have $u>u_\ast>u_0$ and therefore the solution $u$ is defined for all $s\in [0,\infty)$ by Lemma \ref{lem:IIB:Completeness}. Proposition \ref{prop:Growth:CV:MN} then implies that for $\delta>0$ we have
\[
u>u_\ast^{1+\delta}
\]
for all $s>0$, \ie $u$ has superlinear growth at infinity. In particular, using that fact that $u_0$ grows instead linearly in $s$, we deduce that
\[
\lim_{s\ra\infty}{\frac{u_0}{u}}= 0, \qquad u^{-1}\dot{u}_0\in L^1.
\]
Consider now the ODE system of Proposition \ref{prop:Coho1:IIB}. The previous observations imply
\[
|\dot{\phi}| =  \frac{u_0}{u}\frac{1}{1-\left( \frac{u_0}{u}\right)^2} \frac{\dot{u}_0}{u} \in L^1
\]
and $\phi$ has a finite limit $\phi_\infty$ as $s\ra\infty$. More precisely, $\phi = \phi_\infty + O(s^{-2\delta})$. Using Remark \ref{rmk:Coho1:IIB:Infinity}, we then deduce
\[
u^2\dot{u} \approx \tfrac{1}{2}\kappa^2 e^{4\phi_\infty} e^{6s}.
\]
Integrating we obtain that $u$ has growth
\[
u\approx \left( \left(\tfrac{\kappa}{2}\right)^{\frac{1}{3}} e^{\frac{2}{3}\phi_\infty} e^s\right)^2.
\]
By a change of variable $\left(\tfrac{\kappa}{2}\right)^{\frac{1}{3}} e^{\frac{2}{3}\phi_\infty} e^s=r$ we see that geometrically $(\omega,\Omega,\phi)$ is asymptotic to the conical solution $(\omega_{\tu{C}},\Omega_\tu{C}, \phi_\infty)$.

This concludes the proof of part (i) of Theorem \ref{thm:Coho1:IIB:Deformed}. Part (ii) follows from Proposition \ref{prop:IIB:Comparison:CV:MN} and Remark \ref{rmk:Coho1:IIB:Deformed:IVP} and part (iii) is Definition \ref{def:CVMN:Deformed}.

We now prove part (iv). Assume by contradiction that a solution $u$ corresponding to $\phi_0<\phi_0 (\alpha',\kappa)$ is defined for all $s\in [0,\infty)$. Then we can apply the second part of Proposition \ref{prop:Growth:CV:MN} concluding that
\[
u<u_\ast^{1+\delta}
\]
for all small $\delta$. Taking $\delta<0$ we deduce that $u$ has sublinear growth at infinity and therefore the condition $u>u_0$ could not possibly be satisfied for $s$ sufficiently large.

The proof of Theorem \ref{thm:Coho1:IIB:Resolved} is similar, but we use Lemma \ref{lem:Coho1:IIB:Resolved:IVP} and Proposition \ref{prop:Growth:CV:MN:I} instead of Remark \ref{rmk:Coho1:IIB:Deformed:IVP} to guarantee that
\[
u-u_\ast, \qquad \dot{u}-\dot{u}_c, \qquad \frac{\dot{u}}{u} -\frac{\dot{u}_\ast}{u_\ast} 
\]
have the same sign as $c-1$ for some sufficiently large $s_0$. Indeed, Lemma \ref{lem:Coho1:IIB:Resolved:IVP} implies that the sign of $u-u_c$ and $\dot{u}-\dot{u}_c$ is the same as the sign of $c-1$ for all times. Since on the common interval of definition the sign of $u_c-u_1=u_c-u_\ast$ is also the same as the sign of $c-1$ and $\dot{u}_c$ is independent of $c$, we obtain the statements about the sign of $u-u_\ast$ and $\dot{u}-\dot{u}_\ast$. We can then apply Proposition \ref{prop:Growth:CV:MN:I} to control the sign of $\frac{\dot{u}}{u} -\frac{\dot{u}_\ast}{u_\ast}$ for sufficiently large times. Finally, part (ii) of Theorem \ref{thm:Coho1:IIB:Resolved} follows from Proposition \ref{prop:IIB:Comparison:CV:MN:Resolved}. Indeed, note that since $u_c(s)$ is increasing in $c$ for every fixed $s$, $\dot{u}_c$ is constant in $c$ and the solution $u$ corresponding to parameter $c$ has $u=u_c$ and $\dot{u}=\dot{u}_c$ at the initial time, Lemma \ref{lem:Coho1:IIB:Resolved:IVP} implies that $u(s)$ and $\dot{u}(s)$ are also increasing in $c$, so that the hypotheses of Proposition \ref{prop:IIB:Comparison:CV:MN:Resolved} hold. This is shown as follows: take $c_1>c_2>1$ and let $u_1$ and $u_2$ be the corresponding solutions. At the time $s_2$ where $u_2(s_2)=u_{c_2}(s_2)=-u_0(s_2)$ we have $u_1(s_2)>u_{c_1}(s_2)>u_{c_2}(s_2)=u_2(s_2)$ and $\dot{u}_1(s_2)>\dot{u}_{c_1}(s_2)=\dot{u}_{c_2}(s_2)=\dot{u}_2(s_2)$ (where equalities are understood as one sided limits $s \downarrow s_2$). By continuity, $u_1>u_2$ and $\dot{u}_1>\dot{u}_2$ at some time $s_0>s_2$, and it follows from this that the inequalities of Proposition \ref{prop:IIB:Comparison:CV:MN:Resolved} are satisfied.

\subsection{Discussion}

We close the paper by collecting some properties and observations about the solutions produced in Theorems \ref{thm:Coho1:IIB:Deformed} and \ref{thm:Coho1:IIB:Resolved}.

\subsubsection{Genuine 1-parameter families of new non-K\"ahler solutions}

First of all, we note that our existence theorems yield 1-parameter families of genuinely distinct solutions of the IIB system up to the symmetries of Section \ref{sec:Parameters}. Indeed, for Theorem \ref{thm:Coho1:IIB:Deformed} we can simply use the dilaton-translation freedom $\phi\mapsto \phi+\psi$ to fix the parameter $\phi_0$. Since $(\alpha',\kappa)$ have a cohomological interpretation, solutions corresponding to different values of these parameters cannot be related by a diffeomorphism that fixes the asymptotic cone at infinity (up to discrete symmetries that change the signs of $\alpha'$ and $\kappa$). The scaling symmetry of Section \ref{sec:Parameters} can then be used to reduce to a single genuine parameter. For Theorem \ref{thm:Coho1:IIB:Resolved} we can instead use the scaling and dilaton-translation symmetries of Section \ref{sec:Parameters} to fix the parameters $(\alpha',b)$. Solutions corresponding to distinct parameter $c$ cannot differ by a diffeomorphism because they have CV--MN singular ends with different parameters.

It is also clear that the solutions cannot be homogeneous because the dilaton $\phi$ is never constant and, as already observed in the Introduction, the solutions are never K\"ahler for $\alpha'\neq 0$ simply because then $[d^c\omega]\neq 0$ in cohomology.

\subsubsection{Asymptotically conical behaviour and the Bismut connection} We will now explain how to deduce Theorem \ref{mthm:Bismut} in the Introduction from our existence results.

In Theorems \ref{thm:Coho1:IIB:Deformed} and \ref{thm:Coho1:IIB:Resolved} we showed that the generic (forward) complete solution is asymptotically conical in the sense that $\phi$ converges to a constant $\phi_\infty$ and after a change of variable $u\approx r^2$. By rescaling and a shift of the dilaton $\phi$ by a constant, for simplicity we can always assume that $\phi_\infty=0$ and $\kappa=2$ in the complex structure of the deformed conifold. In the new variable $r$, we then have $V=r^3(1+ O(r^{-6}))$ and $u_0 = O(\log{r})$.

Now, consider the IIB ODE system of Proposition \ref{prop:Coho1:IIB}. Integrating the ODE for $\phi$ we obtain that $\phi = O(r^{-4}(\log{r})^2)$ and therefore the ODE for $u$ reads
\[
\partial_r u^3 = 6r^5 \left(1+ O(r^{-4}(\log{r})^2)\right).
\]
Integrating yields $u=r^2 (1+O(r^\nu))$ for any $\nu>-3$, with similar estimates for all derivatives. This implies that $\omega -\omega_{\tu{C}}=O(r^{\tau})$ for any $\tau>-2$ (because of the logarithmic behaviour of $u_0$).

In particular, the Levi--Civita connection $\nabla^{\tu{LC}}$ of the metric induced by $(\omega,\Omega)$ is of the form $\nabla^{\tu{LC}} = \nabla^{\tu{C}}+O(r^{\tau-1})$, where $\nabla^{\tu{C}}$ is the Levi--Civita connection of the Calabi--Yau cone metric on $\tu{C}$ and $d^c\omega=O(r^{\tau-1})$ (in fact, the explicit expressions \eqref{eq:dcomegaBfield} and \eqref{eq:Bfield} show that $d^c\omega = O(r^{-3})$). It follows that the Bismut connection $\nabla^{\tu{B}}$ of the generic complete solution in Theorems \ref{thm:Coho1:IIB:Deformed} and \ref{thm:Coho1:IIB:Resolved} is of the form $\nabla^{\tu{B}} = \nabla^{\tu{C}} + O(r^{-1+\tau})$ for all $\tau>-2$ (in particular we can certainly take $\tau<0$). 

With these analytical facts in place, we turn to the characterization of the Bismut holonomy. Let $V \subset \Lambda^*T^*M$ denote the bundle of $\sunitary{3}$--invariant differential forms, generated fibre-wise as an algebra by $\omega$, $\operatorname{Re} \Omega$, and $\operatorname{Im} \Omega$, and let $V^\perp$ its orthogonal complement.
The list of proper connected closed subgroups of $\sunitary{3}$ is: $\tu{S}(\tu{U}(1) \times \tu{U}(2)) \cong \tu{U}(2)$, $\tu{SO}(3)$, $\sunitary{2}$, $\tu{T}^2$ and $\unitary{1}_{p,q}=\{ (e^{ip\theta},e^{iq\theta},e^{-i(p+q)\theta})\,|\, e^{i\theta}\in\unitary{1}\}$.
Assume that the holonomy of $\nabla^{\tu B}$ is a proper subgroup $G \subset \sunitary{3}$. Then, by direct inspection, in any of the cases above the holonomy representation at a point $x \in M$ must fix $\sigma_x \in V_x^\perp \subset \Lambda^*T_x^*M$ of degree $2$ or degree $3$. By parallel transport, $\sigma_x$ extends to a non-zero section $\sigma$ of $V^\perp$, which is Bismut parallel. In particular, $\sigma$ has non-vanishing constant norm $c = |\sigma|_g \neq 0$. By convergence of the Bismut connection to the Levi--Civita connection of the Calabi--Yau cone metric on the conifold, one can then show that $\sigma$ must be asymptotic to a $ \nabla^\tu{C}$--parallel form lying in the orthogonal complement of the bundle of $\sunitary{3}$--invariant differential forms on the cone. Since $\nabla^\tu{C}$ has full holonomy $\sunitary{3}$, the limit must vanish, which contradicts that $c \neq 0$. Therefore, we conclude that the holonomy of $\nabla^{\tu B}$ is the full group $\tu{SU}(3)$.

\begin{remark*}
Recall here that $\nabla^{\tu{C}}$ has full holonomy $\sunitary{3}$ by Gallot's Theorem (which states that the metric cone over a complete Riemannian manifold is either flat or irreducible) combined with Berger's classification of Riemannian holonomy groups, which states that an irreducible, non-locally-symmetric Riemannian $6$-manifold has holonomy either $\tu{SO}(6)$, $\unitary{3}$, or $\sunitary{3}$.
\end{remark*}

\begin{remark*}
As observed in the Introduction, Theorem \ref{mthm:Bismut} is in contrast with the compact case, where the holonomy of the Bismut connection of any non-K\"ahler BHE metric is contained in $\tu{SU}(n-1) \subset \sunitary{n}$, where $n$ is the complex dimension \cite{ABLS26,GFJS}.
\end{remark*}

\subsubsection{Asymptotics of the dilaton functional}\label{sec:Dilaton:Functional}

In this section we study the \emph{dilaton functional}
\begin{equation}\label{eq:Dilaton:Functional}
\mathcal{D}(\omega,\phi) := \int{e^{-2\phi}\,\frac{\omega^3}{6}}
\end{equation}
as introduced in \cite{GFRST}, on the families of solutions produced by Theorems
\ref{thm:Coho1:IIB:Deformed} and \ref{thm:Coho1:IIB:Resolved}. The functional $\mathcal{D}$ diverges on the different domains of the solutions, and hence we will introduce cut-offs and calculate the corresponding asymptotics. In the present setup, the functional \eqref{eq:Dilaton:Functional} is very natural, as $e^{-2\phi}\vol_g = e^{-2\phi}\omega^3/6$ is the weighted measure with respect to which a solution of the IIB
system is a gradient steady generalized Ricci soliton (in particular, it is used in the integration by parts of Proposition \ref{prop:soliton}).

Let $(\omega,\Omega,\phi)$ be an $\sunitary{2}^2$--invariant solution of the
IIB system on $M^\ast$ in the normal form \eqref{eq:normalomega}, and let
$V=e^{-2\phi}\lambda\mu$ be as in Proposition \ref{prop:Coho1:IIB}. Then
$$
e^{-2\phi}\frac{\omega^3}{6} = \tfrac12 V^2 e^{2\phi} ds\wedge\eta^{se}\wedge(\omega_1^{se})^2,
$$
and consequently, for any interval $I$ of the variable $s$,
\begin{equation}\label{eq:Dilaton:1D}
\mathcal{D}\bigl( \{ s\in I\}\bigr) := \frac{16\pi^3}{27}\int_I{V^2e^{2\phi}\,ds}.
\end{equation}
By Proposition \ref{prop:Coho1:IIB} the integrand of
\eqref{eq:Dilaton:1D} is
\[
V^2e^{2\phi} =
\begin{cases}
e^{6s}e^{2\phi} & \qquad\text{in case (i)},\\[2pt]
\kappa^2\sinh^2{(3s)} e^{2\phi} & \qquad\text{in case (ii)},
\end{cases}
\]
so that everything reduces to analyse the behaviour of the dilaton.

We begin with the two borderline members, $p=p_\ast$ and $q=q_\ast$. The two cases behave rather differently at the level of primitives, while having the same growth. For the CV--MN solution on the resolved conifold (see Definition \ref{def:CVMN:Resolved}), with parameters $(\alpha',b)$ and
$W:=\tfrac92\alpha's+b+\tfrac34\alpha' = u_\ast -\tfrac34\alpha'$, one has
\begin{equation}\label{eq:Dilaton:CVMN:Resolved}
V^2e^{2\phi_\ast} \; = \; \frac{3\sqrt3}{2}\,\alpha'\,e^{3s}\sqrt{W},
\end{equation}
and, with $x:=\sqrt{2W/3\alpha'}$,
\begin{equation}\label{eq:Dilaton:CVMN:Resolved:Primitive}
\int{V^2e^{2\phi_\ast}\,ds}
\; = \; \frac{3\,(\alpha')^{3/2}}{\sqrt2}\,
e^{-\frac{2b}{3\alpha'}-\frac12}
\left( \tfrac12\,x\,e^{x^2} - \frac{\sqrt\pi}{4}\operatorname{erf}(x)\right),
\end{equation}
where $\operatorname{erf}$ denotes the \emph{error function}. In particular the primitive is not elementary, and
$\mathcal{D} (\{ s\leq S\}) \sim \tu{const}\cdot e^{3S}\sqrt S$ as $S\ra\infty$.

On the other hand, for the CV--MN solution on the deformed conifold with parameters $(\alpha',\kappa)$,
$$
V^2e^{2\phi_\ast} \; = \; \frac{\kappa}{3\sqrt2}\left( \tfrac92\alpha'\right)^{3/2}\sqrt{D}
\; = \; \tfrac94\,\kappa\,(\alpha')^{3/2}\sqrt D,
$$
where
$$
D := 9s^2\sinh^2{(3s)} - \left( 3s\cosh{(3s)}-\sinh{(3s)}\right)^2 = 3s\sinh{(6s)} - 9s^2 - \tfrac12\left( \cosh{(6s)}-1\right).
$$
Hence, as $s\ra\infty$, $D\sim\tfrac14e^{6s}(6s-1)$ and
\[
V^2e^{2\phi_\ast} \sim \frac{\kappa}{2\sqrt3}\left(\tfrac92\alpha'\right)^{3/2}
e^{3s}\sqrt{s-\tfrac16},
\]
so that again $\mathcal{D} (\{ s\leq S\})\sim \tu{const}\cdot e^{3S}\sqrt S$.

\begin{remark*}
As $s\ra 0$ one has $V^2e^{2\phi_\ast} = \tfrac{81}{4}\kappa (\alpha')^{3/2}s^2 + O(s^4)$ and $\mathcal{D}$ is finite near the zero-section.
\end{remark*}

We turn to the members $p>p_\ast$ and $q>q_\ast$, which by Theorems
\ref{thm:Coho1:IIB:Deformed} and \ref{thm:Coho1:IIB:Resolved} are
asymptotic to the cone solution $(\omega_{\tu{C}},\Omega_{\tu{C}},e^{\phi_\infty})$.
We normalise the radius by $u\simeq r^2$, that is, $r=e^se^{2\phi_\infty/3}$. Let $(\omega,\Omega,\phi)$ be an asymptotically conical member of either
family, with asymptotic dilaton $\phi_\infty$. Then the dilaton
$\phi$ has the expansion
$$
\phi = \phi_\infty + (c_1s+c_0)\,e^{-4s} + O(s^2e^{-8s}),
\qquad
c_1 = \tfrac{81}{16}(\alpha')^2e^{-8\phi_\infty/3},
\qquad
\frac{c_0}{c_1} = \frac14+\frac{2b}{9\alpha'},
$$
and, consequently, one can prove that
\begin{equation}\label{eq:Dilaton:AC}
\mathcal{D}\bigl(\{ r\leq R\}\bigr) = 
\frac{8\pi^3}{81}\,e^{-2\phi_\infty}R^6 + 3\pi^3(\alpha')^2e^{-2\phi_\infty}\,R^2\log{R}
 +  O(R^2).
\end{equation}
Hence, in this case, the dilaton functional is polynomial growth of degree $6$, with leading coefficient strictly decreasing in the parameter $\phi_\infty$, and proportional to the volume of the cone 
$$
\int_{\{ r\leq R\}} \frac{\omega_{\tu{C}}^3}{6}= \tfrac{16\pi^3}{27}\int_0^R{t^5\,dt}
= \tfrac{8\pi^3}{81}R^6.
$$

\bibliographystyle{amsinitial}
\bibliography{IIB}

\end{document}